\providecommand{\U}[1]{\protect\rule{.1in}{.1in}}
\newtheorem{theorem}{Theorem}[section]
\theoremstyle{definition}
\newtheorem{proposition}{Proposition}
\newtheorem{corollary}{Corollary}
\theoremstyle{remark}
\newtheorem{remark}[theorem]{Remark}
\numberwithin{equation}{section}
\begin{document}
\title{Distance Functions for Reproducing Kernel Hilbert Spaces }
\author{N. Arcozzi}
\address{Dipartimento do Matematica, Universita di Bologna, 40127 Bologna, ITALY}
\email{arcozzi@dm.unibo.it }
\author{R. Rochberg}
\address{Department of Mathematics, Washington University, St. Louis, MO 63130, U.S.A}
\email{rr@math.wustl.edu}
\author{E. Sawyer}
\address{Department of Mathematics \& Statistics, McMaster University; Hamilton,
Ontairo, L8S 4K1, CANADA\ }
\email{sawyer@mcmaster.ca }
\author{B. D. Wick}
\address{School of Mathematics, Georgia Institute of Technology, 686 Cherry Street,
Atlanta, GA USA 30332--0160}
\email{wick@math.gatech.edu }
\thanks{The first author's work partially supported by the COFIN project Analisi
Armonica, funded by the Italian Minister for Research}
\thanks{The second author's work supported by the National Science Foundation under
Grant No. 0700238}
\thanks{The third author's work supported by the National Science and Engineering
Council of Canada.}
\thanks{The fourth author's work supported by the National Science Foundation under
Grant No. 1001098  and 0955432}
\subjclass[2010]{ 46E22}
\date{*****}
\keywords{Reproducing kernel, Hilbert space, metric, pseudohyperbolic}

\begin{abstract}
Suppose $H$ is a space of functions on $X$. If $H$ is a Hilbert space with
reproducing kernel then that structure of $H$ can be used to build distance
functions on $X$. We describe some of those and their interpretations and
interrelations. We also present some computational properties and examples.

\end{abstract}
\maketitle

\section{Introduction and Summary}

If $H$ is a Hilbert space with reproducing kernel then there is an associated
set, $X,$ and the elements of $H$ are realized as functions on $X. $ The space
$H$ can then be used to define distance functions on $X$. We will present
several of these and discuss their interpretations, interrelations and
properties. We find it particularly interesting that these ideas interface
with so many other areas of mathematics.

Some of our computations and comments are new but many of the details
presented here are known, although perhaps not as well known as they might be.
One of our goals in this note is to bring these details together and place
them in unified larger picture. The choices of specific topics however
reflects the recent interests of the authors and some relevant topics get
little or no mention.

The model cases for what we discuss are the hyperbolic and pseudohyperbolic
distance functions on the unit disk $\mathbb{D}$. We recall that material in
the next section. In the section after that we introduce definitions,
notation, and some basic properties of Hilbert spaces with reproducing
kernels. In Section 4 we introduce a function $\delta$, show that it is a
distance function on $X,$ and provide interpretations of it. In the section
after that we introduce a pair of distance functions first considered in this
context by Kobayshi and which, although not the same as $\delta,$ are closely
related. In Section 6 we discuss the relation between the distance functions
that have been introduced and distances coming from having a Riemannian metric
on $X.$ The model case for this is the relation between three quantities on
the disk, the pseudohyperbolic distance, its infinitesimal version, the
Poincare-Bergman metric tensor, and the associated geodesic distance, the
hyperbolic metric.

Several of the themes here are common in recent literature on reproducing
kernel Hilbert spaces. Some of the results here appear in the literature as
results for Bergman spaces, but in hindsight they extend almost without change
to larger classes of Hilbert space with reproducing kernel. Also, many results
for the Hardy space suggest natural and productive questions for reproducing
kernel Hilbert spaces with complete Nevanlinna Pick kernels. Those spaces have
substantial additional structure and $\delta$ then has additional
interpretations and properties. That is discussed in Section 7.

Section 8 focuses on computations of how $\delta$ changes when $H$ is replaced
by a subspace. Although that work is phrased in the language of distance
functions it can be seen as yet another instance of using extremal functions
for invariant or co-invariant subspaces as tools to study the subspaces.

Finally, we would like to emphasize that the material we present has not been
studied much and most of the natural questions one can ask in this area are open.

\section{Distances on the Unit Disk}

Here we collect some background material; references are \cite{G}, \cite{MPS},
and \cite{JP}.

The pseudohyperbolic metric, $\rho,$ is a metric on the unit disk,
$\mathbb{D}$, defined by, for $z,w\in\mathbb{D,}$%
\[
\rho\left(  z,w\right)  =\left\vert \frac{z-w}{1-\bar{z}w}\right\vert .
\]
Given any distance function $\sigma$ we can define the length of a curve
$\gamma:[a,b]\rightarrow\mathbb{D}$ by
\[
\ell_{\sigma}(\gamma)=\sup\left\{  \sum\limits_{i=o}^{n-1}\sigma(\gamma
(t_{i}),\gamma(t_{i+1})):a=t_{0}<t_{1}<...<t_{n}=b\right\}  .
\]
Using this functional we can define a new distance, $\sigma^{\ast},$ by
\[
\sigma^{\ast}\left(  z,w\right)  =\inf\left\{  \ell_{\sigma}(\gamma
):\gamma\text{ a curve joining }z\text{ to }w\right\}  .
\]
Automatically $\sigma^{\ast}\geq\sigma$ and if equality holds $\sigma$ is
called an \textit{inner distance. }More generally $\sigma^{\ast}$ is referred
to as the inner distance generated by $\sigma.$

The distance $\rho$ is not an inner distance. The associated $\rho^{\ast}$ is
the hyperbolic distance, $\beta,$ which is related to $\rho$ by%
\[
\beta=\log\frac{1+\rho}{1-\rho},\text{ }\rho=\frac{1}{2}\tanh\beta.
\]
The hyperbolic distance can also be obtained by regarding the disk as a
Riemannian manifold with length element%
\[
ds=\frac{2\left\vert dz\right\vert }{1-\left\vert z\right\vert ^{2}}%
\]
in which case $\beta(z,w)$ is the length of the geodesic connecting $z$ to
$w.$

The Hardy space, $H^{2}=H^{2}\left(  \mathbb{D}\right)  ,$ is the Hilbert
space of functions, $f(z)=\sum_{n=0}^{\infty}a_{n}z^{n}$ which are holomorphic
on the disk and for which $\left\Vert f\right\Vert ^{2}=\sum\left\vert
a_{n}\right\vert ^{2}<\infty.$ The inner product of $f$ with $g(z)=\sum
b_{n}z^{n}$ is \thinspace$\left\langle f,g\right\rangle =\sum a_{n}\bar{b}%
_{n}.$ The Hardy space is a Hilbert space with reproducing kernel. That is,
for each $\zeta\in\mathbb{D}$ there is a kernel function $k_{\zeta}\in H^{2}$
which reproduces the value of functions at $\zeta;$ $\forall f\in H^{2},$
$\left\langle f,k_{\zeta}\right\rangle =f(\zeta).$ It is straightforward to
see that there is at most one such function and that $k_{\zeta}\left(
z\right)  =\left(  1-\bar{\zeta}z\right)  ^{-1}$ has the required property.

For the Hardy space now, and later for a general reproducing kernel Hilbert
space, we are interested in the functional $\delta\left(  \cdot,\cdot\right)
, $ defined for $\left(  z,w\right)  \in$ $\mathbb{D\times D}$, by
\begin{equation}
\delta(z,w)=\delta_{H^{2}}(z,w)=\sqrt{1-\left\vert \left\langle \frac{k_{z}%
}{\left\Vert k_{z}\right\Vert },\frac{k_{w}}{\left\Vert k_{w}\right\Vert
}\right\rangle \right\vert ^{2}}. \label{Hardy delta}%
\end{equation}
For the Hardy space this is evaluated as%
\begin{equation}
\delta_{H^{2}}(z,w)=\sqrt{1-\frac{(1-\left\vert z\right\vert ^{2})(1-|w|^{2}%
)}{\left\vert 1-\bar{z}w\right\vert ^{2}}}.
\end{equation}
This can be simplified using a wonderful identity. For $z,w\in\mathbb{D}$%
\begin{equation}
1-\frac{(1-\left\vert z\right\vert ^{2})(1-|w|^{2})}{\left\vert 1-\bar
{z}w\right\vert ^{2}}=\left\vert \frac{z-w}{1-\bar{z}w}\right\vert ^{2}.
\label{magic}%
\end{equation}
Hence $\delta_{H^{2}}(z,w)=\rho(z,w).$

\section{Reproducing Kernel Hilbert Spaces}

By a reproducing kernel Hilbert space, RKHS, we mean a Hilbert space $H$ of
functions defined on a set $X$ together with a function $K(\cdot,\cdot)$
defined on $X\times X$ with two properties; first, $\forall x\in X,$
$k_{x}(\cdot)=K(\cdot,x)\in H,$ second $\forall f\in H$ $\left\langle
f,k_{x}\right\rangle =f(x).$ The function $k_{x}$ is called the reproducing
kernel for the point $x.$ We will use the following notation for unit vectors
in the direction of the kernel functions. For $x\in X$ we set
\[
\hat{k}_{x}=\frac{k_{x}}{\left\Vert k_{x}\right\Vert }.
\]

General background on such spaces can be found, for instance, in \cite{AM}.
Here we will just mention three families of examples and collect some standard facts.

\subsection{Examples}

\begin{center}
\textbf{The Dirichlet-Hardy-Bergman Family}
\end{center}

For $\alpha>0$ let $\mathcal{H}_{\alpha}$ be the RKHS of holomorphic functions
on $\mathbb{D}$ with the reproducing kernel
\[
K_{\alpha}\left(  w,z\right)  =k_{\alpha,z}(w)=\left(  1-\bar{z}w\right)
^{-\alpha}.
\]
For $\alpha=0$ there is the limit version
\[
K_{0}\left(  w,z\right)  =k_{0,z}(w)=\frac{1}{\bar{z}w}\log\frac{1}{1-\bar
{z}w}.
\]
We have not included normalizing multiplicative constants as part of the
kernels; we will be dealing only with expressions similar to
(\ref{Hardy delta}) which are not affected by such constants. Also, we have
not specified the norms for the spaces. In fact we will need to know how to
take inner products with kernel functions but we will never need exact
formulas for norms of general functions in the spaces. Hence we will give
Hilbert space norms for the function spaces which are equivalent to the
intrinsic RKHS norms.

First we consider the case $\alpha>1.$ These are generalized Bergman spaces;
$f\in\mathcal{H}_{\alpha}$ if and only if
\[
\left\Vert f\right\Vert _{\mathcal{H}_{\alpha}}^{2}\sim\int\int_{\mathbb{D}%
}\left\vert f(z)\right\vert ^{2}(1-\left\vert z\right\vert ^{2})^{\alpha
-2}dxdw<\infty.
\]
The case $\alpha=2$ is the classical Bergman space.

If $\alpha=1$ we have the Hardy space described earlier. In that case the norm
can be given using the radial boundary values $f^{\ast}(e^{i\theta})$ by
\[
\left\Vert f\right\Vert _{\mathcal{H}_{0}}^{2}=\int_{\partial\mathbb{D}%
}\left\vert f^{\ast}(e^{i\theta})\right\vert ^{2}d\theta.
\]
An equivalent norm for the Hardy space is%

\[
\left\Vert f\right\Vert _{\mathcal{H}_{1}}^{2}\sim\left\vert f(0)\right\vert
^{2}+\int\int_{\mathbb{D}}\left\vert f^{\prime}(z)\right\vert ^{2}%
(1-\left\vert z\right\vert ^{2})dxdw.
\]
The second description of the norm for the Hardy space is the one which
generalizes to $\alpha<1$.  For $0\leq\alpha\leq1$, $f$ is in $\mathcal{H}%
_{\alpha}$ exactly if
\[
\left\Vert f\right\Vert _{\mathcal{H}_{\alpha}}^{2}\sim\left\vert
f(0)\right\vert ^{2}+\int\int_{\mathbb{D}}\left\vert f^{\prime}(z)\right\vert
^{2}(1-\left\vert z\right\vert ^{2})^{\alpha}dxdw<\infty.
\]
The space $\mathcal{H}_{0}$ is the Dirichlet space and the $\mathcal{H}%
_{\alpha}$ for $0<\alpha<1$ are called generalized Dirichlet spaces.

\begin{center}
\textbf{The Fock-Segal-Bargmann Scale}
\end{center}

For $\beta>0$ let $F_{\beta}$ be the Hilbert space of holomorphic functions on
$\mathbb{C}$ for which
\[
\left\Vert f\right\Vert _{F_{\beta}}^{2}\sim\int\int_{\mathbb{C}}\left\vert
f(z)\right\vert ^{2}e^{-\beta\left\vert z\right\vert ^{2}}dxdy<\infty.
\]
This is a RKHS and the kernel function is given by%
\[
K_{\beta}\left(  z,w\right)  =e^{\beta zw}.
\]

\begin{remark}
There are other families of RKHS for which the kernel functions are powers of
each other and still others where such relations hold asymptotically, see, for
instance \cite{E}, \cite{JPR}.
\end{remark}

\begin{center}
\textbf{General Bergman Spaces}
\end{center}

Suppose $\Omega$ is a bounded domain in $\mathbb{C}$ or, for that matter,
$\mathbb{C}^{n}.$ The Bergman space of $\Omega$, $B(\Omega),$ is the space of
all functions holomorphic on $\Omega$ which are square integrable with respect
to volume measure; $f\in B(\Omega)$ exactly if
\[
\left\Vert f\right\Vert _{B(\Omega)}^{2}=\int\int_{\Omega}\left\vert
f(z)\right\vert ^{2}dV(z)<\infty.
\]
In this case it is easy to see that $B(\Omega)$ is a Hilbert space and that
evaluation at points of $\Omega$ are continuous functionals and hence are
given by inner products with some kernel functions. However, and this is one
of the reasons for mentioning this example, it is generically not possible to
write explicit formulas for the kernel functions.

\subsection{Multipliers}

Associated with a RKHS $H$ is the space $M(H)$ of multipliers of $H,$
functions $m$ defined on $X$ with the property that multiplication by $m$ is a
bounded map of $H$ into itself. For $m\in M(H)$ we will denote the operator of
multiplication by $m$ by $M_{m}.$ The \textit{multiplier norm }of $m$ is
defined to be the operator norm of $M_{m}.$

For example, the multiplier algebra $M(B(\Omega))$ consists of all bounded
analytic functions on $\Omega.$ The multiplier algebra of any of the spaces
$F_{\beta}$ consists of only the constant functions.

\subsection{ Background Facts}

Suppose that $H$ is a RKHS on $X$ with kernel functions $\left\{
k_{x}\right\}  _{x\in X}$ and multiplier algebra $M(H).$ The following are
elementary Hilbert space results.

\begin{proposition}
Suppose $f\in H,$ $\left\Vert f\right\Vert \leq1,x\in H$. The maximum possible
value of $\operatorname{Re}f(z)$ (and hence also of $\left\vert
f(z)\right\vert )$ is the value $\left\Vert k_{z}\right\Vert =k_{z}(z)^{1/2}$
attained by the unique function $f=\hat{k}_{z}.$
\end{proposition}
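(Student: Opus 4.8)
The plan is to recognize $\operatorname{Re} f(z)$ as the real part of an inner product and apply Cauchy--Schwarz. By the reproducing property, for any $f \in H$ we have $f(z) = \langle f, k_z \rangle$, so $\operatorname{Re} f(z) = \operatorname{Re}\langle f, k_z\rangle \leq |\langle f, k_z\rangle| \leq \|f\|\,\|k_z\|$ by the Cauchy--Schwarz inequality. Under the constraint $\|f\| \leq 1$ this gives $\operatorname{Re} f(z) \leq \|k_z\|$, and the same bound holds for $|f(z)|$ since $|f(z)| = |\langle f, k_z\rangle|$ directly. It remains to identify $\|k_z\|$ and to settle the equality case.

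For the value of the bound, I would compute $\|k_z\|^2 = \langle k_z, k_z\rangle = k_z(z) = K(z,z)$ using the reproducing property applied to $f = k_z$; hence $\|k_z\| = k_z(z)^{1/2}$, matching the statement. For the extremal function, recall that equality in Cauchy--Schwarz $|\langle f, k_z\rangle| \leq \|f\|\,\|k_z\|$ forces $f$ to be a scalar multiple of $k_z$, say $f = c\,k_z$; the norm constraint $\|f\| \leq 1$ together with wanting to maximize then forces $|c| = 1/\|k_z\|$, and requiring $\operatorname{Re}\langle f, k_z\rangle$ (rather than merely its modulus) to attain the maximum pins down the phase of $c$, giving $c = 1/\|k_z\|$, i.e. $f = k_z/\|k_z\| = \hat{k}_z$. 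One should also note the degenerate case $k_z = 0$ (i.e. $K(z,z) = 0$), where every $f \in H$ vanishes at $z$ and the statement holds trivially with maximum value $0$; I would either dispatch this in a sentence or tacitly assume $k_z \neq 0$.

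There is no real obstacle here — the argument is a direct application of Cauchy--Schwarz and its equality condition. The only point requiring a little care is the bookkeeping in the equality case: separating the magnitude constraint (which gives $|c|$) from the phase choice (which is what distinguishes maximizing $\operatorname{Re} f(z)$ from maximizing $|f(z)|$, though both are maximized simultaneously in modulus). I would present the chain of inequalities first, then the identification $\|k_z\|^2 = k_z(z)$, then the equality discussion, keeping the whole proof to a few lines.
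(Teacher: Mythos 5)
Your proof is correct and is exactly the standard Cauchy--Schwarz argument; the paper offers no proof at all for this proposition, simply labeling it an elementary Hilbert space fact, and your write-up (including the equality-case bookkeeping and the degenerate case $k_z=0$) is the argument it implicitly has in mind.
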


\begin{proposition}
\label{extremal}There is a unique $F_{z,w}\in H$ with $\left\Vert
F_{z,w}\right\Vert _{H}\leq1$ and $F_{z,w}(z)=0$ which maximizes
$\operatorname{Re}F_{z,w}(w).$ It is given by
\[
F_{z,w}\left(  \cdot\right)  =\frac{k_{w}\left(  \cdot\right)  -\left\Vert
k_{z}\right\Vert ^{-2}k_{w}\left(  z\right)  k_{z}\left(  \cdot\right)
}{\left\Vert k_{w}\right\Vert \sqrt{1-\left\vert \left\langle \hat{k}_{z}%
,\hat{k}_{w}\right\rangle \right\vert ^{2}}.}%
\]
and it has
\[
F_{z,w}\left(  w\right)  =\left\Vert k_{w}\right\Vert \sqrt{1-\left\vert
\left\langle \hat{k}_{z},\hat{k}_{w}\right\rangle \right\vert ^{2}}.
\]

\end{proposition}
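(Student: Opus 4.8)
The plan is to reduce the problem to a concrete orthogonality computation in the two-dimensional subspace $V = \operatorname{span}\{k_z, k_w\}$, since the value $F(z)=0$ and the functional $f\mapsto \operatorname{Re} f(w) = \operatorname{Re}\langle f, k_w\rangle$ only involve inner products against $k_z$ and $k_w$. First I would observe that the constraint $F(z) = \langle F, k_z\rangle = 0$ says $F \perp k_z$, and that replacing $F$ by its orthogonal projection onto $V$ can only decrease the norm while leaving both $F(z)$ and $F(w)$ unchanged; hence the extremal $F$ lies in $V \cap \{k_z\}^{\perp}$, which (assuming $k_z, k_w$ are linearly independent, the nondegenerate case) is one-dimensional. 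So the extremal problem becomes: among unit vectors in a fixed one-dimensional space, maximize $\operatorname{Re}\langle F, k_w\rangle$; the maximizer is unique (up to the stated phase normalization that makes $F(w)$ real and positive) and equals $\|P F_0\|$ where $F_0$ is any nonzero vector in that line and $P$ is... more cleanly, the maximum of $\operatorname{Re}\langle F, k_w\rangle$ over unit $F$ in the line is exactly the norm of the orthogonal projection of $k_w$ onto that line.

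The key computational steps, carried out in order: (1) form $g = k_w - \|k_z\|^{-2}\langle k_w, k_z\rangle\, k_z$, the component of $k_w$ orthogonal to $k_z$; note $g(z) = \langle g, k_z\rangle = 0$ by construction, so $g$ satisfies the vanishing constraint and spans $V\cap\{k_z\}^\perp$. (2) Compute $\|g\|^2$: expanding, $\|g\|^2 = \|k_w\|^2 - \|k_z\|^{-2}|\langle k_w,k_z\rangle|^2 = \|k_w\|^2\bigl(1 - |\langle \hat k_z,\hat k_w\rangle|^2\bigr)$. (3) Set $F_{z,w} = g/\|g\|$, which is the unit vector in the line; this is exactly the displayed formula once $\|g\|$ is substituted. (4) Evaluate $F_{z,w}(w) = \langle F_{z,w}, k_w\rangle = \langle g, k_w\rangle/\|g\| = \|g\|^2/\|g\| = \|g\|$, using $\langle g, k_w\rangle = \langle g, g\rangle$ since $g - k_w \perp g$ (indeed $g-k_w$ is a multiple of $k_z$, and $g\perp k_z$). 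This gives $F_{z,w}(w) = \|k_w\|\sqrt{1-|\langle\hat k_z,\hat k_w\rangle|^2}$, the claimed value, and in particular it is real and positive, confirming the phase is correctly normalized.

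For uniqueness I would argue that any competitor $F$ with $\|F\|\le 1$, $F(z)=0$ decomposes as $F = cF_{z,w} + h$ with $h \perp V$; then $\operatorname{Re} F(w) = \operatorname{Re}\, c\,\langle F_{z,w},k_w\rangle = (\operatorname{Re} c)\|g\|$, while $\|F\|^2 = |c|^2 + \|h\|^2 \le 1$ forces $\operatorname{Re} c \le 1$ with equality only when $c=1$ and $h=0$. The main obstacle — really the only thing needing care — is the degenerate case where $k_z$ and $k_w$ are linearly dependent (so $\langle\hat k_z,\hat k_w\rangle$ has modulus $1$ and the denominator vanishes): then $V\cap\{k_z\}^\perp = \{0\}$, the only admissible $F$ is $0$, and the "maximizer" and its value are both $0$, consistent with the formula read as a limit; I would note this briefly rather than belabor it. A secondary minor point is justifying that projecting onto $V$ preserves the constraint and the objective and does not increase the norm, which is immediate from $k_z, k_w \in V$ and the Pythagorean theorem.
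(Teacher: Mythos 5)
Your argument is correct and complete: the reduction to the one-dimensional space $\operatorname{span}\{k_z,k_w\}\cap\{k_z\}^{\perp}$, the computation $\|g\|^{2}=\|k_w\|^{2}\bigl(1-|\langle\hat{k}_z,\hat{k}_w\rangle|^{2}\bigr)$, and the decomposition $F=cF_{z,w}+h$ for uniqueness are exactly what is needed. The paper itself offers no proof, listing this among ``elementary Hilbert space results,'' so there is nothing to compare against; your write-up (including the remark on the degenerate case $k_w\in\operatorname{span}\{k_z\}$, where the denominator vanishes and uniqueness fails) is the standard argument the authors left implicit.
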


\begin{proposition}
\label{multiplier}For $m\in M(H), x\in X$ we have $\left(  M_{m}\right)
^{\ast}k_{x}=\overline{m\left(  x\right)  }k_{x}$
\end{proposition}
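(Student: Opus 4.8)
The plan is to verify the identity by testing both sides against an arbitrary element of $H$ and invoking the reproducing property. Since $m\in M(H)$, the operator $M_{m}$ is by definition a bounded operator on $H$, so its Hilbert space adjoint $(M_{m})^{\ast}$ exists as a bounded operator on $H$; it therefore suffices to show that $(M_{m})^{\ast}k_{x}$ and $\overline{m(x)}k_{x}$ induce the same linear functional on $H$.

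First I would fix $f\in H$ and compute $\langle f,(M_{m})^{\ast}k_{x}\rangle$ using the definition of the adjoint, rewriting it as $\langle M_{m}f,k_{x}\rangle=\langle mf,k_{x}\rangle$. Next I would apply the reproducing property of $k_{x}$ to the function $mf\in H$, obtaining $\langle mf,k_{x}\rangle=(mf)(x)=m(x)f(x)$. Then, pulling the scalar $m(x)$ out and using the reproducing property once more in the form $f(x)=\langle f,k_{x}\rangle$, I get $m(x)f(x)=m(x)\langle f,k_{x}\rangle=\langle f,\overline{m(x)}k_{x}\rangle$. Comparing the two ends of this chain of equalities gives $\langle f,(M_{m})^{\ast}k_{x}\rangle=\langle f,\overline{m(x)}k_{x}\rangle$ for every $f\in H$, whence $(M_{m})^{\ast}k_{x}=\overline{m(x)}k_{x}$.

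There is essentially no obstacle here: the only points requiring the slightest care are noting that $mf$ genuinely lies in $H$ (which is exactly the multiplier hypothesis, so that the reproducing identity may be applied to it) and handling the complex conjugation correctly when moving the scalar across the inner product, which is conjugate-linear in the second slot under the convention used in this paper. One may optionally remark that this computation shows every kernel function $k_{x}$ is an eigenvector of $(M_{m})^{\ast}$, which is the abstract source of many of the extremal-function identities used elsewhere in the paper.
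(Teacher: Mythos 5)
Your proof is correct and is the standard argument; the paper itself states this proposition without proof (listing it among "elementary Hilbert space results"), and your computation $\langle f,(M_{m})^{\ast}k_{x}\rangle=\langle mf,k_{x}\rangle=m(x)f(x)=\langle f,\overline{m(x)}k_{x}\rangle$ is exactly the intended justification. The care you take about $mf\in H$ and the conjugate-linearity in the second slot is appropriate and nothing is missing.
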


\begin{proposition}
\label{multiplier extremal}Suppose $m\in M(H), x,y\in X.$ If $\left\Vert
M_{m}\right\Vert _{M(H)}\leq1$ and $m(x)=0$ then
\end{proposition}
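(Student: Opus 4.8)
The plan is to dualize: Proposition~\ref{multiplier} turns the two multiplier hypotheses into statements about the single contraction $(M_m)^{\ast}$ acting on the kernel functions at $x$ and $y$, after which the bound follows from one application of Pythagoras together with the fact that the adjoint of a contraction is a contraction.

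First I would record the consequences of Proposition~\ref{multiplier}. Since $m(x)=0$ we get $(M_m)^{\ast}k_{x}=\overline{m(x)}\,k_{x}=0$, hence $(M_m)^{\ast}\hat{k}_{x}=0$; and in general $(M_m)^{\ast}\hat{k}_{y}=\overline{m(y)}\,\hat{k}_{y}$. Next I would split $\hat{k}_{y}$ into its component along $\hat{k}_{x}$ and an orthogonal remainder, writing $\hat{k}_{y}=\langle\hat{k}_{y},\hat{k}_{x}\rangle\hat{k}_{x}+u$ with $u\perp\hat{k}_{x}$, so that $\lVert u\rVert^{2}=1-\lvert\langle\hat{k}_{x},\hat{k}_{y}\rangle\rvert^{2}$. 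Applying $(M_m)^{\ast}$ to this identity and using $(M_m)^{\ast}\hat{k}_{x}=0$ annihilates the first term and leaves $\overline{m(y)}\,\hat{k}_{y}=(M_m)^{\ast}u$.

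Taking norms, and using $\lVert(M_m)^{\ast}\rVert=\lVert M_m\rVert_{M(H)}\le 1$, gives
\[
\lvert m(y)\rvert=\bigl\lVert (M_m)^{\ast}u\bigr\rVert\le\lVert u\rVert=\sqrt{1-\bigl\lvert\langle\hat{k}_{x},\hat{k}_{y}\rangle\bigr\rvert^{2}},
\]
which is the asserted estimate, i.e.\ $\lvert m(y)\rvert\le\delta(x,y)$ in the notation to be introduced in Section~4. (If the statement is phrased with $\operatorname{Re}m(y)$ as in Proposition~\ref{extremal}, rotating $m$ by a unimodular constant reduces to the modulus version.)

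There is no hard computation here; the only real content is recognizing that the right move is to pass to $(M_m)^{\ast}$ and exploit that the kernel functions are its eigenvectors. The point that deserves a remark rather than an argument is the equality case: equality would force $u$ to be a maximizing vector for $(M_m)^{\ast}$, and in a general RKHS there need be no contractive multiplier for which this occurs, so — in contrast with Proposition~\ref{extremal} — one should not expect an extremal multiplier to exist; I would flag that the bound is nonetheless attained for complete Nevanlinna--Pick spaces, anticipating Section~7, where contractive multipliers solving the associated interpolation problem realize it.
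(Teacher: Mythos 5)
Your proof is correct. The paper states this proposition without proof, listing it among the ``elementary Hilbert space results,'' so there is no argument of the authors' to compare against; your route via the adjoint is a clean and standard one. The key chain — $(M_m)^{\ast}\hat{k}_x=0$ by Proposition~\ref{multiplier}, the orthogonal decomposition $\hat{k}_y=\langle\hat{k}_y,\hat{k}_x\rangle\hat{k}_x+u$, and then $|m(y)|=\lVert(M_m)^{\ast}u\rVert\le\lVert u\rVert$ — is airtight. The other standard argument, which fits the surrounding text slightly more closely, is to apply Proposition~\ref{extremal} directly to the function $f=m\hat{k}_y$: it has $\lVert f\rVert\le\lVert M_m\rVert\le1$ and $f(x)=0$, so $|m(y)|\,\lVert k_y\rVert=|f(y)|\le\lVert k_y\rVert\sqrt{1-|\langle\hat{k}_x,\hat{k}_y\rangle|^2}$. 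That version makes explicit the interpretation the authors invoke in Section~4 ($\delta$ as a ratio of two extremal problems), whereas yours isolates the operator-theoretic mechanism (kernel functions as eigenvectors of $(M_m)^{\ast}$); both are one-line deductions from the stated background facts. Your closing remark about the equality case is also accurate and consistent with Proposition~\ref{multiplier modulus of continuity}.
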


\[
\left\vert m(y)\right\vert \leq\sqrt{1-\left\vert \left\langle \hat{k}%
_{x},\hat{k}_{y}\right\rangle \right\vert ^{2}}.
\]

\section{The Sine of the Angle}

Suppose we have a RKHS $H$ of functions on $X$ and we want to introduce a
metric on $X$ that reflects the properties of functions in $H.$ There are
various ways to do this, for instance we could declare the distance between
$x,y\in X$ to be $\left\Vert \hat{k}_{x}-\hat{k}_{y}\right\Vert .$ Here we
focus on a different choice. Motivated by, among other things,\thinspace the
modulus of continuity estimates in Proposition \ref{extremal} and Proposition
\ref{multiplier extremal} we define, if neither $k_{x}$ nor $k_{y}$ is the
zero function,%
\begin{equation}
\delta(x,y)=\delta_{H}(x,y)=\sqrt{1-\left\vert \left\langle \hat{k}_{x}%
,\hat{k}_{y}\right\rangle \right\vert ^{2}}. \label{def1}%
\end{equation}
We don't have a satisfactory definition of $\delta(x,y)$ if $k_{x}$ or $k_{y}
$ is the zero function. Either declaring these distances to be $1$ or to be
$0$ would lead to awkwardness later. Instead we leave $\delta$ undefined in
such cases. However we will overlook that fact and, for instance write%
\[
\forall x,y\in X,\delta_{H_{1}}(x,y)=\delta_{H_{2}}(x,y)
\]
to actually mean that the stated equality holds for all $x,y$ for which both
sides are defined.

One way to interpret $\delta$ is to note that, by virtue of the propositions
in the previous section,%
\[
\delta_{H}(x,y)=\frac{\sup\left\{  \left\vert f(y)\right\vert :f\in
H,\left\Vert f\right\Vert =1,f(x)=0\right\}  }{\sup\left\{  \left\vert
f(y)\right\vert :f\in H,\left\Vert f\right\Vert =1\right\}  }.
\]
Also, $\delta(x,y)$ measures how close the unit vectors $\hat{k}_{x}$ and
$\hat{k}_{y}$ are to being parallel. If $\theta$ is the angle between the two
then $\delta(x,y)=\sqrt{1-\cos^{2}\theta}=\left\vert \sin\theta\right\vert .$

In fact $\delta$ is a pseudo-metric. It is clearly symmetric. It is positive
semidefinite and will be positive definite if $H$ separates points of $X.$
(Although we will consider spaces which do not separate all pairs of points we
will still refer to $\delta$ as a metric.) The triangle inequality can be
verified by a simple argument \cite[Pg. 128]{AM}. Instead we proceed to
computations which develop further the idea that $\delta$ measures the
distance between points in the context of $H.$ A corollary of the first of
those computations is that $\delta$ satisfies the triangle inequality.

For a linear operator $L$ we denote the operator norm by $\left\Vert
L\right\Vert $ and the trace class norm by $\left\Vert L\right\Vert
_{\operatorname*{Trace}}.$ If $L$ is a rank $n$ self adjoint operator then it
will have real eigenvalues $\left\{  \lambda_{i}\right\}  _{i=1}^{n}.$ In that
case we have
\[
\left\Vert L\right\Vert =\sup\left\{  \left\vert \lambda_{i}\right\vert
\right\}  ,\text{ }\left\Vert L\right\Vert _{\operatorname*{Trace}}%
=\sum\left\vert \lambda_{i}\right\vert ,\text{ }\operatorname*{Trace}\left(
L\right)  =\sum\lambda_{i}.
\]
Also, recall that if $L$ is acting on a finite dimensional space then
$\operatorname*{Trace}\left(  L\right)  $ equals the sum of the diagonal
elements of any matrix which represents $L$ with respect to an orthonormal basis.

\begin{proposition}
[Coburn \cite{CO2}]\label{norm}For $x,y\in X$ let $P_{x}$ and $P_{y}$ be the
self adjoint projections onto the span of $k_{x}$ and $k_{y}$ respectively.
With this notation
\[
\delta(x,y)=\left\Vert P_{x}-P_{y}\right\Vert =\frac{1}{2}\left\Vert
P_{x}-P_{y}\right\Vert _{\operatorname*{Trace}}.
\]

\begin{proof}
$P_{x}$ and $P_{y}$ are rank one self adjoint projections and hence have trace
one. Thus the difference, $P_{x}-P_{y},$ is a rank two self adjoint operator
with trace zero and so it has two eigenvalues, $\pm\lambda$ for some
$\lambda\geq0.$ Thus $\left\Vert P_{x}-P_{y}\right\Vert =\lambda,$ $\left\Vert
P_{x}-P_{y}\right\Vert _{\operatorname*{Trace}}=2\lambda.$ We will be finished
if we show $2\delta(x,y)^{2}=2\lambda^{2}.$ We compute
\begin{align*}
2\lambda^{2}  &  =\operatorname*{Trace}\left(  \left(  P_{x}-P_{y}\right)
^{2}\right) \\
&  =\operatorname*{Trace}\left(  P_{x}+P_{y}-P_{x}P_{y}-P_{y}P_{x}\right) \\
&  =2-\operatorname*{Trace}\left(  P_{x}P_{y}\right)  -\operatorname*{Trace}%
\left(  P_{y}P_{x}\right) \\
&  =2-2\operatorname*{Trace}\left(  P_{x}P_{y}\right)  .
\end{align*}
Going to last line we used the fact that $\operatorname*{Trace}\left(
AB\right)  =\operatorname*{Trace}\left(  BA\right)  $ for any $A,B.$ We now
compute $\operatorname*{Trace}\left(  P_{x}P_{y}\right)  .$ Let $V$ be the
span of $k_{x}$ and $k_{y}.$ $P_{x}P_{y}$ maps $V$ into itself and is
identically zero on $V^{\perp}$ hence we can evaluate the trace by regarding
$P_{x}P_{y}$ as an operator on $V,$ picking an orthonormal basis for $V,$ and
summing the diagonal elements of the matrix representation of $V$ with respect
to that basis. We select the basis $\hat{k}_{y}$ and $j$ where $j$ is any unit
vector in $V$ orthogonal to $k_{y}.$ Noting that $P_{y}\hat{k}_{y}=\hat{k}%
_{y}$ and $P_{y}j=0$ we compute
\begin{align*}
\operatorname*{Trace}\left(  P_{x}P_{y}\right)   &  =\left\langle P_{x}%
P_{y}\hat{k}_{y},\hat{k}_{y}\right\rangle +\left\langle P_{x}P_{y}%
j,j\right\rangle \\
&  =\left\langle P_{x}\hat{k}_{y},\hat{k}_{y}\right\rangle +0\\
&  =\left\langle \left\langle \hat{k}_{y},\hat{k}_{x}\right\rangle \hat{k}%
_{x},\hat{k}_{y}\right\rangle \\
&  =\left\vert \left\langle \hat{k}_{y},\hat{k}_{x}\right\rangle \right\vert
^{2}%
\end{align*}
which is what we needed.
\end{proof}
\end{proposition}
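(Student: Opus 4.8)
The plan is to exploit that $A:=P_x-P_y$ is self-adjoint of rank at most two. First I would note that $P_x$ and $P_y$ are rank-one orthogonal projections, so each has trace one; hence $A$ is self-adjoint with $\operatorname{Trace}(A)=0$ and rank at most two, and $A$ annihilates the orthogonal complement of $V:=\operatorname{span}\{k_x,k_y\}$. A self-adjoint operator of rank $\le 2$ whose eigenvalues sum to zero must have spectrum $\{\lambda,-\lambda\}$ for some $\lambda\ge 0$ (together with $0$ on $V^\perp$). From this alone we read off $\|A\|=\lambda$ and $\|A\|_{\operatorname*{Trace}}=2\lambda$, so the whole proposition reduces to showing $\lambda=\delta(x,y)$.

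To pin down $\lambda$ I would compute $\operatorname{Trace}(A^2)=\lambda^2+(-\lambda)^2=2\lambda^2$ a second way. Expanding $A^2=P_x+P_y-P_xP_y-P_yP_x$ and using $\operatorname{Trace}(P_x)=\operatorname{Trace}(P_y)=1$ together with the cyclicity identity $\operatorname{Trace}(P_yP_x)=\operatorname{Trace}(P_xP_y)$ gives $2\lambda^2=2-2\operatorname{Trace}(P_xP_y)$. The only genuine computation left is $\operatorname{Trace}(P_xP_y)$. Since $P_xP_y$ maps $V$ into itself and kills $V^\perp$, I can evaluate its trace on $V$ using the orthonormal basis $\{\hat k_y,j\}$, where $j\in V$ is a unit vector orthogonal to $k_y$. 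Because $P_y\hat k_y=\hat k_y$ and $P_yj=0$, only the $\hat k_y$-diagonal term survives, and $\langle P_x\hat k_y,\hat k_y\rangle=\langle\langle\hat k_y,\hat k_x\rangle\hat k_x,\hat k_y\rangle=|\langle\hat k_x,\hat k_y\rangle|^2$. Hence $2\lambda^2=2\bigl(1-|\langle\hat k_x,\hat k_y\rangle|^2\bigr)=2\,\delta(x,y)^2$, so $\lambda=\delta(x,y)$.

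Honestly, this argument has no serious obstacle: the one place needing a moment's care is the spectral bookkeeping in the first paragraph — recognizing that ``rank $\le 2$, self-adjoint, trace zero'' forces eigenvalues $\pm\lambda$, and that the operator and trace norms are then immediate. An alternative to the $\operatorname{Trace}(A^2)$ trick would be to restrict $A$ to $V$, write out its $2\times2$ matrix in the basis $\{\hat k_y,j\}$, and diagonalize directly, but the trace route is cleaner and avoids choosing $j$ explicitly. As a sanity check, when $\hat k_x=\hat k_y$ the projections coincide and $\lambda=0$, while when $\hat k_x\perp\hat k_y$ we get $\lambda=1$, consistent with $\delta$ taking values in $[0,1]$.
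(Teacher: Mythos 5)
Your proposal is correct and follows essentially the same route as the paper's own proof: reduce to the eigenvalues $\pm\lambda$ of the trace-zero, rank-two self-adjoint operator $P_x-P_y$, then identify $\lambda$ via $\operatorname{Trace}\left(\left(P_x-P_y\right)^2\right)=2-2\operatorname{Trace}\left(P_xP_y\right)$ and the same basis computation of $\operatorname{Trace}\left(P_xP_y\right)$. The only (harmless) refinement is your "rank at most two" phrasing, which also covers the degenerate case $P_x=P_y$.
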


\begin{remark}
Because we actually found the eigenvalues of $P_{x}$ and $P_{y}$ we can also
write $\delta$ in terms of any of the Schatten $p$-norms, $1\leq p<\infty;$
$\delta(x,y)=2^{-1/p}\left\Vert P_{x}-P_{y}\right\Vert _{S_{p}}.$
\end{remark}

A similar type of computation allows us to compute the operator norm of the
commutator $\left[  P_{a},P_{b}\right]  =P_{a}P_{b}-P_{b}P_{a}.$ Informally,
if $a$ and $b$ are very far apart, $\delta(a,b)\sim1,$ then each of the two
products will be small and hence so will the commutator. If the points are
very close, $\delta(a,b)\sim0,$ the individual products will be of moderate
size and almost equal so their difference will be small.

\begin{proposition}
$\left\Vert \left[  P_{a},P_{b}\right]  \right\Vert ^{2}=\delta(a,b)^{2}%
\left(  1-\delta(a,b)^{2}\right)  .$

\begin{proof}
Note that $\left[  P_{a},P_{b}\right]  $ is a skew adjoint rank two operator
of trace $0$ and hence has eigenvalues $\pm i\lambda,$ for some $\lambda>0.$
Hence $\left\Vert \left[  P_{a},P_{b}\right]  \right\Vert =\lambda.$ Also,
$\left[  P_{a},P_{b}\right]  ^{\ast}\left[  P_{a},P_{b}\right]  $ is a
positive rank two operator with eigenvalues $\lambda^{2},$ $\lambda^{2}$ so
its trace is $2\lambda^{2}.$ We now compute%
\begin{align*}
\left[  P_{a},P_{b}\right]  ^{\ast}\left[  P_{a},P_{b}\right]   &  =P_{a}%
P_{b}P_{b}P_{a}-P_{a}P_{b}P_{a}P_{b}-P_{b}P_{a}P_{b}P_{a}+P_{b}P_{a}P_{a}%
P_{b}\\
&  =P_{a}P_{b}P_{a}-P_{a}P_{b}P_{a}P_{b}-P_{b}P_{a}P_{b}P_{a}+P_{b}P_{a}P_{b}%
\end{align*}
Recalling that for any $A,B,\operatorname*{Trace}\left(  AB\right)
=\operatorname*{Trace}\left(  BA\right)  $ and also that the projections are
idempotent we see that
\[
\operatorname*{Trace}\left(  P_{a}P_{b}P_{a}\right)  =\operatorname*{Trace}%
\left(  P_{a}P_{b}\right)  =\operatorname*{Trace}\left(  P_{b}P_{a}\right)
=\operatorname*{Trace}\left(  P_{b}P_{a}P_{b}\right)  .
\]
The two middle quantities were computed in the previous proof
\[
\operatorname*{Trace}\left(  P_{a}P_{b}\right)  =\operatorname*{Trace}\left(
P_{b}P_{a}\right)  =\left\vert \left\langle \hat{k}_{a},\hat{k}_{b}%
\right\rangle \right\vert ^{2}.
\]
We also have
\[
\operatorname*{Trace}(P_{a}P_{b}P_{a}P_{b})=\operatorname*{Trace}\left(
P_{b}P_{a}P_{b}P_{a}\right)
\]
We compute the trace of the rank two operator $P_{b}P_{a}P_{b}P_{a}$ by
summing the diagonal entries of the matrix representation of the operator with
respect to an orthonormal basis consisting of $\hat{k}_{a}$ and $j,$ a unit
vector orthogonal to $\hat{k}_{a}.$%
\begin{align}
\operatorname*{Trace}\left(  P_{b}P_{a}P_{b}P_{a}\right)   &  =\left\langle
P_{b}P_{a}P_{b}P_{a}\hat{k}_{a},\hat{k}_{a}\right\rangle +\left\langle
P_{b}P_{a}P_{b}P_{a}j,j\right\rangle \nonumber\\
&  =\left\langle P_{b}P_{a}P_{b}\hat{k}_{a},\hat{k}_{a}\right\rangle +0
\label{trace}%
\end{align}
Next note that
\begin{align*}
P_{b}P_{a}P_{b}\hat{k}_{a}  &  =\left\langle \hat{k}_{a},\hat{k}%
_{b}\right\rangle P_{b}P_{a}\hat{k}_{b}\\
&  =\left\langle \hat{k}_{a},\hat{k}_{b}\right\rangle \left\langle \hat{k}%
_{b},\hat{k}_{a}\right\rangle P_{b}\hat{k}_{a}\\
&  =\left\langle \hat{k}_{a},\hat{k}_{b}\right\rangle \left\langle \hat{k}%
_{b},\hat{k}_{a}\right\rangle \left\langle \hat{k}_{a},\hat{k}_{b}%
\right\rangle \hat{k}_{b}%
\end{align*}
and hence we can evaluate (\ref{trace}) and obtain $\left\vert \left\langle
\hat{k}_{a},\hat{k}_{b}\right\rangle \right\vert ^{4}.$ Thus
\begin{align*}
\left\Vert \left[  P_{a},P_{b}\right]  \right\Vert ^{2}  &  =\frac{1}%
{2}\operatorname*{Trace}\left(  \left[  P_{a},P_{b}\right]  ^{\ast}\left[
P_{a},P_{b}\right]  \right) \\
&  =\frac{1}{2}\left(  2\operatorname*{Trace}\left(  P_{a}P_{b}\right)
-2\operatorname*{Trace}\left(  P_{b}P_{a}P_{b}P_{a}\right)  \right) \\
&  =\left\vert \left\langle \hat{k}_{a},\hat{k}_{b}\right\rangle \right\vert
^{2}-\left\vert \left\langle \hat{k}_{a},\hat{k}_{b}\right\rangle \right\vert
^{4}=\left(  1-\left\vert \left\langle \hat{k}_{a},\hat{k}_{b}\right\rangle
\right\vert ^{2}\right)  \left\vert \left\langle \hat{k}_{a},\hat{k}%
_{b}\right\rangle \right\vert ^{2}\\
&  =\delta(a,b)^{2}\left(  1-\delta(a,b)^{2}\right)  .
\end{align*}

\end{proof}
\end{proposition}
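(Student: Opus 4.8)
The plan is to follow the template of the two preceding proofs: pin down the spectral structure of the operator, reduce its norm to a single trace via the rank-two/trace-zero bookkeeping, and then evaluate the one new trace by a computation inside $V=\operatorname{span}\{k_a,k_b\}$.

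First I would record the structural facts about $C:=[P_a,P_b]=P_aP_b-P_bP_a$. Since $P_a,P_b$ are self adjoint, $C^{\ast}=(P_aP_b)^{\ast}-(P_bP_a)^{\ast}=P_bP_a-P_aP_b=-C$, so $C$ is skew adjoint; its range lies in $V$, so it has rank at most two; and $\operatorname{Trace}(C)=0$ by cyclicity of the trace. A skew adjoint operator of rank at most two and trace zero has eigenvalues $\pm i\lambda$ for some $\lambda\geq 0$, hence $\left\Vert C\right\Vert =\lambda$, while $C^{\ast}C$ is positive of rank at most two with both eigenvalues equal to $\lambda^{2}$, so $\operatorname{Trace}(C^{\ast}C)=2\lambda^{2}$. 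Thus it suffices to compute $\operatorname{Trace}(C^{\ast}C)$.

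Next I would expand $C^{\ast}C=(P_bP_a-P_aP_b)(P_aP_b-P_bP_a)$ and collapse the resulting terms using idempotence $P_a^{2}=P_a$, $P_b^{2}=P_b$ together with cyclicity. Two terms reduce to $\operatorname{Trace}(P_aP_bP_a)=\operatorname{Trace}(P_aP_b)=\left\vert\left\langle\hat{k}_a,\hat{k}_b\right\rangle\right\vert^{2}$, which was computed in Proposition \ref{norm}, and the remaining two are $\operatorname{Trace}(P_aP_bP_aP_b)=\operatorname{Trace}(P_bP_aP_bP_a)$. So $\operatorname{Trace}(C^{\ast}C)=2\left\vert\left\langle\hat{k}_a,\hat{k}_b\right\rangle\right\vert^{2}-2\operatorname{Trace}(P_bP_aP_bP_a)$. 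For the one genuinely new quantity, note that $P_bP_aP_bP_a$ annihilates $(\operatorname{span}k_a)^{\perp}$, so after extending $\hat{k}_a$ to an orthonormal basis of the ambient space its trace is just $\left\langle P_bP_aP_bP_a\hat{k}_a,\hat{k}_a\right\rangle$; then I would compute $P_bP_aP_b\hat{k}_a$ by peeling off one inner-product scalar per projection, exactly as in the display preceding equation (\ref{trace}), arriving at $\left\vert\left\langle\hat{k}_a,\hat{k}_b\right\rangle\right\vert^{4}$.

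Assembling the pieces gives $2\lambda^{2}=2\left\vert\left\langle\hat{k}_a,\hat{k}_b\right\rangle\right\vert^{2}-2\left\vert\left\langle\hat{k}_a,\hat{k}_b\right\rangle\right\vert^{4}$, hence $\left\Vert[P_a,P_b]\right\Vert^{2}=\lambda^{2}=\left\vert\left\langle\hat{k}_a,\hat{k}_b\right\rangle\right\vert^{2}\bigl(1-\left\vert\left\langle\hat{k}_a,\hat{k}_b\right\rangle\right\vert^{2}\bigr)=\delta(a,b)^{2}\bigl(1-\delta(a,b)^{2}\bigr)$. I do not expect a serious obstacle here; the only places that demand care are justifying that the skew adjoint $C$ has spectrum exactly $\{\pm i\lambda\}$ (so that $\left\Vert C\right\Vert^{2}=\tfrac12\operatorname{Trace}(C^{\ast}C)$) and tracking the scalar factors correctly in the quartic trace — both routine but error-prone.
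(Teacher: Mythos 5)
Your proposal is correct and follows essentially the same route as the paper's proof: skew adjointness and the rank-two/trace-zero structure give eigenvalues $\pm i\lambda$, the norm is recovered from $\operatorname{Trace}(C^{\ast}C)=2\lambda^{2}$, and the one new trace $\operatorname{Trace}(P_bP_aP_bP_a)=\left\vert\left\langle\hat{k}_a,\hat{k}_b\right\rangle\right\vert^{4}$ is evaluated by the same basis computation. The only (welcome) refinement is your allowing $\lambda\geq 0$ rather than $\lambda>0$, which covers the degenerate case where the projections commute.
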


\noindent\textbf{Hankel forms: }The projection operators of the previous
section can be written as $P_{a}=\hat{k}_{a}\rangle\langle\hat{k}_{a}.$ In
some contexts these are the primordial normalized Toeplitz operators. More
precisely, on a Bergman space these are the Toeplitz operators with symbol
given by the measure $\left\Vert k_{a}\right\Vert ^{-2}\delta_{a}$ and general
Toeplitz operators are obtained by integrating fields of these. There is also
a map which takes functions on $X$ to bilinear forms on $H\times H.$ The basic
building blocks for that construction are the norm one, rank one (small)
Hankel forms given by $L_{a}=\hat{k}_{a}\otimes\hat{k}_{a},$ thus
\[
L_{a}\left(  f,g\right)  =\left\langle f,\hat{k}_{a}\right\rangle \left\langle
g,\hat{k}_{a}\right\rangle =f(a)g(a).
\]
Limits of sums of these, or, equivalently, integrals of fields of these; are
the Hankel forms on $H;$ for more on this see \cite{JPR}.

The norm of a bilinear form $B$ on $H\times H$ is
\[
\left\Vert B\right\Vert =\sup\left\{  \left\vert B(f,g)\right\vert :f,g\in
H,\left\Vert f\right\Vert =\left\Vert g\right\Vert =1\right\}  .
\]
Associated to a bounded $B$ is a bounded conjugate linear map $\beta$ of $H$
to itself defined by $\left\langle f,\beta g\right\rangle =B(f,g).$ If we then
define a conjugate linear $\beta^{\ast}$ by $\left\langle \beta^{\ast
}f,g\right\rangle =\left\langle \beta g,f\right\rangle $ then
\[
\left\langle \beta^{\ast}\beta f,f\right\rangle =\left\langle \beta f,\beta
f\right\rangle \geq0.
\]
Thus $\beta^{\ast}\beta$ is a positive linear operator. The form norm of $B$
equals the operator norm of $\left(  \beta^{\ast}\beta\right)  ^{1/2}$ and we
define the trace class norm of $B$ to be the trace of the positive operator
$\left(  \beta^{\ast}\beta\right)  ^{1/2}.$ With these definitions in hand we
have a complete analog of Proposition \ref{norm}.

\begin{proposition}
For $x,y\in X$%
\[
\delta(x,y)=\left\Vert L_{x}-L_{y}\right\Vert =\frac{1}{2}\left\Vert
L_{x}-L_{y}\right\Vert _{\operatorname*{Trace}}.
\]

\begin{proof}
Let $\beta_{x}$ and $\beta_{y}$ be the conjugate linear maps associated with
$L_{x}$ and $L_{y}.$ One computes that $\beta_{x}f=\beta_{x}^{\ast
}f=\left\langle \hat{k}_{a},f\right\rangle \hat{k}_{a}$ and similarly for
$\beta_{y}.$ Using this one then checks that for any $x,y;$ $\beta_{y}^{\ast
}\beta_{x}=P_{y}P_{x}.$ Thus the proof of Proposition \ref{norm} goes through.
\end{proof}
\end{proposition}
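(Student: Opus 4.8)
The plan is to reduce the statement about the rank-one Hankel forms $L_x, L_y$ to the already-proved Proposition \ref{norm} about the projections $P_x, P_y$. The key observation is that the operator-theoretic data attached to $L_x$ and $L_y$ — namely the positive operators $\beta_x^\ast\beta_x$, $\beta_y^\ast\beta_y$, and more importantly $\beta_y^\ast\beta_x$ — coincide with the corresponding products of the projections. So the whole burden is to unwind the definitions of the conjugate-linear map $\beta$ associated to a bilinear form and then identify the relevant compositions.

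First I would compute $\beta_x$ explicitly. By definition $\langle f, \beta_x g\rangle = L_x(f,g) = \langle f, \hat k_x\rangle\langle g,\hat k_x\rangle$. Since $\langle g,\hat k_x\rangle$ is a scalar, the right side equals $\langle f, \overline{\langle g,\hat k_x\rangle}\,\hat k_x\rangle = \langle f, \langle \hat k_x, g\rangle \hat k_x\rangle$, so $\beta_x g = \langle \hat k_x, g\rangle\hat k_x$. A parallel computation with the defining relation $\langle \beta_x^\ast f, g\rangle = \langle \beta_x g, f\rangle$ shows $\beta_x^\ast f = \langle \hat k_x, f\rangle \hat k_x$ as well, so $\beta_x = \beta_x^\ast$ as conjugate-linear maps (this is the symmetry of the form). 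Then for any $f$,
\[
\beta_y^\ast \beta_x f = \beta_y^\ast\bigl(\langle \hat k_x, f\rangle \hat k_x\bigr) = \overline{\langle \hat k_x, f\rangle}\,\beta_y^\ast \hat k_x = \langle f, \hat k_x\rangle\langle \hat k_y,\hat k_x\rangle\hat k_y,
\]
where the conjugation is resolved because $\beta_y^\ast$ is conjugate linear. On the other hand $P_x f = \langle f,\hat k_x\rangle \hat k_x$ and $P_y P_x f = \langle f,\hat k_x\rangle \langle \hat k_x,\hat k_y\rangle\hat k_y$ — wait, one must check the order of the inner product carefully: $P_y(\langle f,\hat k_x\rangle\hat k_x) = \langle f,\hat k_x\rangle\langle \hat k_x,\hat k_y\rangle\hat k_y$. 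Comparing, $\beta_y^\ast\beta_x f = \langle f,\hat k_x\rangle\langle \hat k_y,\hat k_x\rangle \hat k_y$; these agree since $\langle \hat k_x,\hat k_y\rangle$ and $\langle \hat k_y,\hat k_x\rangle$ — here is the subtle point — need to match, so I would want to double-check the conjugation bookkeeping and, if necessary, note that what actually matters is $\operatorname{Trace}(\beta_y^\ast\beta_x) = \operatorname{Trace}(P_yP_x)$, which is insensitive to this since both equal $|\langle\hat k_x,\hat k_y\rangle|^2$.

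Once the identity $\beta_y^\ast\beta_x = P_yP_x$ (and by symmetry $\beta_x^\ast\beta_y = P_xP_y$) is established, the form $L_x - L_y$ has associated conjugate-linear map $\beta_x - \beta_y$, and the positive operator $(\beta_x-\beta_y)^\ast(\beta_x-\beta_y)$ expands, using conjugate-linearity of the $\ast$ operation on these maps, into $\beta_x^\ast\beta_x - \beta_x^\ast\beta_y - \beta_y^\ast\beta_x + \beta_y^\ast\beta_y = P_x - P_xP_y - P_yP_x + P_y = (P_x-P_y)^2$. Therefore $(\beta^\ast\beta)^{1/2} = |P_x - P_y|$, so the form norm of $L_x-L_y$ equals the operator norm of $|P_x-P_y|$, which is $\|P_x-P_y\|$, and the trace-class norm of $L_x-L_y$ equals $\operatorname{Trace}|P_x-P_y| = \|P_x-P_y\|_{\operatorname{Trace}}$. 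Invoking Proposition \ref{norm} now gives $\delta(x,y) = \|L_x-L_y\| = \frac12\|L_x-L_y\|_{\operatorname{Trace}}$, completing the proof.

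I expect the main obstacle to be purely bookkeeping: keeping the conjugate-linearity straight through the compositions $\beta_y^\ast\beta_x$ and through the expansion of $(\beta_x-\beta_y)^\ast(\beta_x-\beta_y)$, since $\beta \mapsto \beta^\ast$ reverses order in a way that interacts with conjugation differently than in the linear case. Nothing here is deep — the conceptual content is entirely in Proposition \ref{norm} — but one should present the computation $\beta_x f = \langle\hat k_x, f\rangle\hat k_x$ and the key identity $\beta_y^\ast\beta_x = P_yP_x$ cleanly so the reader sees why "the proof of Proposition \ref{norm} goes through" verbatim. It may be cleanest to state once that all the traces involved are real and reduce to $|\langle\hat k_x,\hat k_y\rangle|^2$, sidestepping any ambiguity about which of $\langle\hat k_x,\hat k_y\rangle$ or its conjugate appears.
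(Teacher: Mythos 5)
Your route is exactly the paper's route: compute $\beta_{x}$, assert $\beta_{y}^{\ast}\beta_{x}=P_{y}P_{x}$, and declare that the proof of Proposition \ref{norm} goes through. But the point you flagged as ``the subtle point'' and then set aside is a genuine gap, not bookkeeping, and it is the same gap that sits in the paper's own one-line proof. Unwinding the definitions as you do gives $\beta_{y}^{\ast}\beta_{x}f=\langle f,\hat{k}_{x}\rangle\langle\hat{k}_{y},\hat{k}_{x}\rangle\hat{k}_{y}$, whereas $P_{y}P_{x}f=\langle f,\hat{k}_{x}\rangle\langle\hat{k}_{x},\hat{k}_{y}\rangle\hat{k}_{y}$: the scalar coefficient is conjugated, so the two operators differ unless $\langle\hat{k}_{x},\hat{k}_{y}\rangle$ is real. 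Your proposed repair --- that in any case $\operatorname{Trace}(\beta_{y}^{\ast}\beta_{x})=\operatorname{Trace}(P_{y}P_{x})=|\langle\hat{k}_{x},\hat{k}_{y}\rangle|^{2}$ --- also fails: the trace of the rank-one map $f\mapsto\langle f,u\rangle v$ is $\langle v,u\rangle$, which here gives $\operatorname{Trace}(\beta_{y}^{\ast}\beta_{x})=\langle\hat{k}_{y},\hat{k}_{x}\rangle^{2}$, a non-real number in general. Consequently your identity $(\beta_{x}-\beta_{y})^{\ast}(\beta_{x}-\beta_{y})=(P_{x}-P_{y})^{2}$ is false: the left side has trace $2-2\operatorname{Re}\bigl(\langle\hat{k}_{x},\hat{k}_{y}\rangle^{2}\bigr)$, the right side has trace $2-2|\langle\hat{k}_{x},\hat{k}_{y}\rangle|^{2}$, and these agree only when $\langle\hat{k}_{x},\hat{k}_{y}\rangle$ is real.

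This cannot be repaired by more careful bookkeeping, because the asserted equality $\|L_{x}-L_{y}\|=\delta(x,y)$ itself fails for non-real $\langle\hat{k}_{x},\hat{k}_{y}\rangle$. Testing the form on the pair $(\hat{k}_{x},\hat{k}_{x})$ gives $(L_{x}-L_{y})(\hat{k}_{x},\hat{k}_{x})=1-\langle\hat{k}_{x},\hat{k}_{y}\rangle^{2}$; if $\langle\hat{k}_{x},\hat{k}_{y}\rangle=it$ with $0<t<1$ (realizable already by an RKHS on a two-point set) this has modulus $1+t^{2}>1\geq\delta(x,y)=\sqrt{1-t^{2}}$. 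A two-by-two computation in an orthonormal basis $\{\hat{k}_{x},j\}$ of the span of $k_{x},k_{y}$ shows more: $(\beta_{x}-\beta_{y})^{\ast}(\beta_{x}-\beta_{y})$ and $(P_{x}-P_{y})^{2}$ have equal determinant but the former has strictly larger trace whenever $\operatorname{Im}\langle\hat{k}_{x},\hat{k}_{y}\rangle\neq0$, so its top eigenvalue strictly exceeds $\delta(x,y)^{2}$ and $\|L_{x}-L_{y}\|>\delta(x,y)$ for every such pair --- which is the generic situation, e.g.\ in the Hardy space. In short, you have faithfully reproduced the published argument, but the step you marked for a ``double-check'' is precisely where both the argument and the statement break; the conclusion is correct in the sesquilinear setting (with $P_{a}$ in place of $L_{a}$, which is Proposition \ref{norm}) and in the bilinear setting only under the additional hypothesis that $\langle\hat{k}_{x},\hat{k}_{y}\rangle$ is real.
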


\section{Formal Properties}

We collect some observations on how the metric $\delta$ interacts with some
basic constructions on RKHS's.

\subsection{Direct Sums}

If $H$ is a RKHS of functions on a set $X$ and $J$ is a RKHS on a disjoint set
$Y$ then we can define a RKHS $(H,J)$ on $X\cup Y$ to be the space of pairs
$\left(  h,j\right)  $ with $h\in H,j\in J$ regarded as functions on $X\cup Y$
via the prescription%
\[
(h,j)(z)=%
\genfrac{\{}{.}{0pt}{}{h(z)\text{ if }z\in X}{j(z)\text{ if }z\in Y.}%
\]
One then easily checks that%
\[
\delta_{(H,J)}(z,z^{\prime})=\left\{
\begin{array}
[c]{cc}%
\delta_{H}(z,z^{\prime}) & \text{if }z,z^{\prime}\in X\\
\delta_{J}(z,z^{\prime}) & \text{if }z,z^{\prime}\in Y\\
1 & \text{otherwise}%
\end{array}
\right.  .
\]

That computation is fundamentally tied to the fact that $H$ and $J$ are sets
of functions on \textit{different }spaces. If, however, all the spaces
considered are functions on the \textit{same }space then it is not clear what
the general pattern is. That is, if $H$ is a RKHS on $X$ and if $J,J^{\prime}$
are two closed subspaces of $H$ with, hoping to simplify the situation,
$J\perp J^{\prime}$ then there seems to be no simple description of the
relationship between $\delta_{H},\delta_{J},\delta_{J^{\prime}},$ and
$\delta_{J\oplus J^{\prime}}.$ In some of the examples in a later section we
will compute these quantities with $J^{\prime}=J^{\perp}$ but no general
statements are apparent.

\subsection{Rescaling}

Suppose $H$ is a RKHS of functions on $X$ and suppose that $G(x)$ is a
nonvanishing function on $X;$ $G$ need not be in $H$ and it need not be
bounded. The associated rescaled space, $GH,$ is the space of functions
$\left\{  Gh:h\in H\right\}  $ with the inner product%
\[
\left\langle Gf,Gg\right\rangle _{GH}=\left\langle f,g\right\rangle _{H}.
\]
It is straightforward to check that $GH$ is an RKHS and that its kernel
function, $K_{GH}$ is related to that of $H,$ $K_{H}$ by%
\[
K_{GH}\left(  x,y\right)  =G(x)\overline{G(y)}K_{H}(x,y).
\]
An immediate consequence of this is that $\delta$ does not see the change;
$\delta_{GH}=\delta_{H}.$

Elementary examples of rescaling show that certain types of information are
not visible to $\delta.$ Suppose we rescale a space $H$ to the space $cH$ for
a number $c$ (that is; $\left\vert c\right\vert ^{2}\left\langle
f,g\right\rangle _{cH}=\left\langle f,g\right\rangle _{H}$). The natural
``identity'' map from $H$ to $cH$ which takes the function $f$ to the function
$f$ will be, depending on the size of $c,$ a strict expansion of norms, an
isometry, or a strict contraction. However it is not clear how one can
recognize these distinctions by working with $\delta_{H}$ and $\delta_{cH}.$

An awkward fact about rescaling is that sometimes it is present but not
obviously so. Consider the following two pair of examples. First, let $H$ be
$\mathcal{H}_{1},$ the Hardy space of the disk. This can be realized as the
closure of the polynomials with respect to the norm%
\[
\left\Vert \sum a_{k}z^{k}\right\Vert _{\mathcal{H}_{1}}^{2}=\int_{0}^{2\pi
}\left\vert \sum a_{k}e^{ik\theta}\right\vert ^{2}\frac{d\theta}{2\pi}.
\]
For a weight, a smooth positive function $w(\theta)$ defined on the circle,
let $\mathcal{H}_{1,w}$ be the weighted Hardy space; the space obtained by
closing the polynomials using the norm
\[
\left\Vert \sum a_{k}z^{k}\right\Vert _{\mathcal{H}_{1,w}}^{2}=\int_{0}^{2\pi
}\left\vert \sum a_{k}e^{ik\theta}\right\vert ^{2}w(\theta)\frac{d\theta}%
{2\pi}.
\]
This is also a RKHS on the disk and in fact these two spaces are rescalings of
each other. However to see that one needs to use a bit of function theory. The
crucial fact is that one can write $w(\theta)$ as
\[
w(\theta)=\left\vert W\left(  e^{i\theta}\right)  \right\vert ^{2}%
\]
with $W(z)$ and $1/W(z)$ holomorphic in the disk and having continuous
extensions to the closed disk. The functions $W^{\pm1}$ can then be used to
construct the rescalings.

We now do a similar construction for the Bergman space. That space,
$\mathcal{H}_{2}$ in our earlier notation, is the space of holomorphic
functions on the disk normed by%
\[
\left\Vert f\right\Vert _{\mathcal{H}_{2}}^{2}=\int\int_{\mathbb{D}}\left\vert
f(z)\right\vert ^{2}dxdy.
\]
A weighted version of this space is given by replacing $dxdy$ by $w(z)dxdy$
for some smooth positive bounded $w.$ To make the example computationally
tractable we suppose $w$ is radial; $w(z)=v(\left\vert z\right\vert ).$ We
define the weighted Bergman space $\mathcal{H}_{2,w}$ by the norming function
\[
\left\Vert f\right\Vert _{\mathcal{H}_{2,w}}^{2}=\int\int_{\mathbb{D}%
}\left\vert f(z)\right\vert ^{2}w(z)dxdy.
\]
The space $\mathcal{H}_{2,w}$ is a RKHS on the disk and is an equivalent
renorming of $\mathcal{H}_{2}$ but is not related by rescaling. One way to see
this is to note that, because the densities $1$ and $w(z)$ are both radial, in
both cases the monomials are a complete orthogonal set. Thus, in both cases,
the kernel function restricted to the diagonal is of the form%
\[
K(z,z)=\sum_{0}^{\infty}\frac{\left\vert z\right\vert ^{2n}}{\left\Vert
z^{n}\right\Vert ^{2}}=a_{0}+a_{1}\left\vert z\right\vert ^{2}+\cdot\cdot
\cdot.
\]
Hence we can compute that for $z$ near the origin%
\[
\delta(0,z)=\frac{\left\Vert 1\right\Vert }{\left\Vert z\right\Vert
}\left\vert z\right\vert (1+O(\left\vert z\right\vert ^{2})).
\]
If the spaces were rescalings of each other then the coefficients $\left\Vert
1\right\Vert /\left\Vert z\right\Vert $ would have to match, but this is not
true independently of the choice of $w$.

\subsection{Products of Kernels}

In some cases the kernel function for a RKHS has a product structure. We begin
by recalling two constructions that lead to that situation. Suppose that for
$i=1,2;$ $H_{i}$ is a RKHS on $X_{i}.$ We can regard the Hilbert space tensor
product $H_{1}\otimes H_{2}$ as a space of functions on the product
$X_{1}\times X_{2}$ by identifying the elementary tensor $h_{1}\otimes h_{2}$
with the function on $X_{1}\times X_{2}$ whose value at $\left(  x_{1}%
,x_{2}\right)  $ is $h_{1}(x_{1})h_{2}(x_2).$ It is a standard fact that this
identification gives $H_{1}\otimes H_{2}$ the structure of a RKHS on
$X_{1}\times X_{2}$ and, denoting the three kernel functions by $K_{1},$
$K_{2},$ and $K_{1,2}$ we have
\[
K_{1,2}\left(  \left(  x_{1},x_{2}\right)  ,\left(  x_{1}^{\prime}%
,x_{2}^{\prime}\right)  \right)  =K_{1}\left(  x_{1},x_{1}^{\prime}\right)
K_{2}\left(  x_{2},x_{2}^{\prime}\right)  .
\]
\ Now suppose further that $X_{1}=X_{2}$ and denote both by $X.$ The mapping
of$\ x\in X$ to $\left(  x,x\right)  \in X\times X$ lets us identify $X$ with
the diagonal $D\subset X\times X$ and we will use this identification to
describe a new RKHS, $H_{12},$ of functions on $X$ (now identified with $D).$
The functions in $H_{12}$ are exactly the functions obtained by restricting
elements of $H_{1}\otimes H_{2}$ to $D.$ The Hilbert space structure is given
as follows. For $f,g\in H_{12}$ let $F$ and $G$ be the unique functions in
$H_{1}\otimes H_{2}$ which restrict to $f$ and $g$ and which have minimum norm
subject to that condition. We then set
\[
\left\langle f,g\right\rangle _{H_{12}}=\left\langle F,G\right\rangle
_{H_{1}\otimes H_{2}}.
\]
To say the same thing in different words we map $H_{12}$ into $H_{1}\otimes
H_{2}$ by mapping each $h$ to the unique element of $H_{1}\otimes H_{2}$ which
restricts to $h$ and which is orthogonal to all functions which vanish on $D.$
The Hilbert space structure on $H_{12}$ is defined by declaring that map to be
a norm isometry$.$ It is a classical fact about RKHSs that $K_{12},$ the
kernel function for $H_{12},$ is given by
\[
K_{12}\left(  x,y\right)  =K_{1}(x,y)K_{2}(x,y).
\]
This leads to a relatively simple relation between the distance functions
$\delta_{1},$ $\delta_{2}$ and $\delta_{12}$ which we now compute. Pick
$x,y\in X.$ We have
\begin{align*}
1-\delta_{12}^{2}\left(  x,y\right)   &  =\frac{\left\vert K_{12}\left(
x,y\right)  \right\vert ^{2}}{K_{12}(x,x)K_{12}(y,y)}\\
&  =\frac{\left\vert K_{1}\left(  x,y\right)  \right\vert ^{2}}{K_{1}%
(x,x)K_{1}(y,y)}\frac{\left\vert K_{2}\left(  x,y\right)  \right\vert ^{2}%
}{K_{2}(x,x)K_{2}(y,y)}\\
&  =\left(  1-\delta_{1}^{2}\left(  x,y\right)  \right)  \left(  1-\delta
_{2}^{2}\left(  x,y\right)  \right)  .
\end{align*}
Hence
\[
\delta_{12}=\sqrt{\delta_{1}^{2}+\delta_{2}^{2}-\delta_{1}^{2}\delta_{2}^{2}%
}.
\]
\ This implies the less precise, but more transparent, estimates
\begin{equation}
\max\left\{  \delta_{1},\delta_{2}\right\}  \leq\delta_{12}\leq\delta
_{1}+\delta_{2}; \label{inequality}%
\end{equation}
with equality only in degenerate cases. Similar results hold for $\delta
_{1,2}$, the distance associated with $H_{1}\otimes H_{2}.$

A particular case of the previous equation is $H_{1}$= $H_{2}.$In that case
\[
\delta_{12}=\sqrt{2\delta_{1}^{2}-\delta_{1}^{4}}\geq\delta_{1}.
\]
This monotonicity, which for instance relates the $\delta$ associated with the
Hardy space with that of the Bergman space, is a special case of the more
general fact. If we have, for a set of positive $\alpha,$ a family of spaces
$H_{\alpha}$ and if there is a fixed function $K$ so that the kernel function
$K_{\alpha}$ for $H_{\alpha}$ is $K^{\alpha}$ then there is automatically a
monotonicity for the distance functions; if $\alpha^{\prime}>\alpha$ then
$\delta_{\alpha^{\prime}}\geq\delta_{\alpha}.$ This applies, for instance, to
the families $\left\{  \mathcal{H}_{\alpha}\right\}  $ and $\left\{  F_{\beta
}\right\}  $ introduced earlier. We also note in passing that in those two
families of examples there is also a monotonicity of the spaces; if
$\alpha<\alpha^{\prime}$ then there is a continuous, in fact a compact,
inclusion of $\mathcal{H}_{\alpha}$ into $\mathcal{H}_{\alpha^{\prime}};$
similarly for the $F_{\beta}.$

\section{The Skwarcy\'{n}ski Metric}

In \cite{K} Kobayshi studies the differential geometry of bounded domains,
$\Omega,$ in $\mathbb{C}^{n}.$ He begins with the observation that there was a
natural map of $\Omega$ into the projective space over the Bergman space of
$\Omega$. He then notes that either of two naturally occurring metrics on that
projective space could then be pulled back to $\Omega$ where they would be
useful tools. However looking back at his paper there was no particular use
made of the Bergman space beyond the fact that it was a RKHS. We will now
describe his constructions and see that they give expressions which are not
the same as $\delta$ but are closely related.

Suppose $H$ is a RKHS of functions on $X.$ Canonically associated with any
point $x\in X$ is a one dimensional subspace of $H,$ the span of the kernel
function $k_{x}$, or, what is the same thing, the orthogonal complement of the
space of functions in $H$ which vanish at $x.$ The projective space over $H,$
$P(H),$ is the space of one dimensional subspaces of $H.$ Hence for each $x\in
X$ we can use the span of $k_{x},$ $\left[  k_{x}\right]  $ to associate to
$x$ a point $p_{x}=\left[  k_{x}\right]  \in P(H).$ To understand the geometry
of this mapping we break in into two steps. First, we associate to each $x\in
X$ the set of vectors in the unit sphere, $S(H)$ that are in the span of
$k_{x}$; all these vectors of the form $e^{i\theta}\hat{k}_{x}$ for real
$\theta.$ The next step is to collapse this circle sitting in the unit
spheres, $\left\{  e^{i\theta}\hat{k}_{x}:\theta\in\mathbb{R}\right\}  ,$ to
the single point $p_{x}=\left[  k_{x}\right]  .$ In fact every point $p\in
P(H)$ is associated in this way to a circle $C(p)\subset$ $S(H)$ and distinct
points correspond to disjoint circles. We now use the fact the distance
function of $H$ makes $S(H)$ a metric space and use that metric to put the
quotient metric on $P(H).$ That is, define a metric $\hat{\delta}$ on $P(H),$
sometimes called the Cayley metric, by:%

\[
\hat{\delta}(p,q)=\inf\left\{  \left\Vert r-s\right\Vert :r\in C(p),s\in
C(q)\right\}
\]

On the subset $\left\{  p_{x}:x\in X\right\}  $ we have explicit descriptions
of the circles $C(p_{x})$ and we compute
\begin{align*}
\hat{\delta}(p_{x},p_{y})  &  =\inf\left\{  \left\Vert e^{i\theta}\hat{k}%
_{x}-e^{i\eta}\hat{k}_{x}\right\Vert :\theta,\eta\in\mathbb{R}\right\} \\
&  =\inf\sqrt{2-2\operatorname{Re}e^{i(\theta-\eta)}\left\langle \hat{k}%
_{x},k_{y}\right\rangle }\\
&  =\sqrt{2}\sqrt{1-\left\vert \left\langle \hat{k}_{x},\hat{k}_{y}%
\right\rangle \right\vert }.
\end{align*}
We now pull this metric back to $X$ and, with slight abuse of notation,
continue to call it $\hat{\delta};$%
\begin{equation}
\hat{\delta}(x,y)=\sqrt{2}\sqrt{1-\left\vert \left\langle \hat{k}_{x},\hat
{k}_{y}\right\rangle \right\vert } \label{def2}%
\end{equation}
Thus the map of $X$ into $P(H)$ which sends $x$ to $p_{x}$ is a isometry $X$
with the metric $\hat{\delta}$ into $P(H)$ with its metric as a quotient of
$S(H)$.

This metric was studied further by Skwarcy\'{n}ski in \cite{S} and then in
collaboration with Mazur and Pflug in \cite{MPS}. In \cite{JP} it is referred
to as the Skwarcy\'{n}ski metric.

The second metric Kobayshi introduces in this context is again obtained by
putting a natural metric on $P(H)$ and then, again, pulling it back to $X.$
The Fubini-Study metric is a natural Kahler metric in finite dimensional
projective space and Kobayshi extends that definition to the generally
infinite dimensional $P(H)$. We denote by $\check{\delta}$ the metric obtained
by restricting the Fubini-Study metric to the image of $X$ in $P(H)$ and then
pulling the metric back to $X.$

In the small these three metrics are almost the same. If one is small so are
the others and, in fact, setting $\delta(x,y)=\delta_{1},$ $\hat{\delta
}(x,y)=\delta_{2},$ $\check{\delta}(x,y)=\delta_{3}$ we have that
\begin{equation}
\max_{i=1,2,3}\left\{  \delta_{i}\right\}  =O(\varepsilon)\Longrightarrow
\max_{i,j=1,2,3}\left\{  |\delta_{i}-\delta_{j}|\right\}  =O(\varepsilon^{3}).
\label{same}%
\end{equation}
The comparison between $\delta_{1}$ and $\delta_{2}$ follows from
(\ref{def1}), (\ref{def2}) and Taylor's theorem. The comparison of $\delta
_{2}$ and $\delta_{3},$ which is not difficult, is given in \cite[pg. 282]{K}.
Comparing (\ref{def1}) and (\ref{def2}) also allows one to see that
\[
\lim_{\delta(x,y)\rightarrow1}\frac{\delta(x,y)}{\hat{\delta}(x,y)/\sqrt{2}%
}=1
\]

\begin{remark}
In mathematical physics, for certain choices of $H,$ the map from $X$ into the
projective space is related to coherent state quantization. In that context
some of the quantities we have been working with, or will be below, are given
physical names/interpretations. For instance $|<\hat{k}_{x},\hat{k}_{y}%
>|^{2}=1-\delta(x,y)^{2}$ is the probability density for transition from the
quantum state $\left[  k_{x}\right]  $ to the state $\left[  k_{y}\right]  .$
See, for instance, \cite{O1}, \cite{O2}, \cite{AE}, and \cite{PV}.
\end{remark}

\section{Differential Geometric Metrics}

In this section we describe the relationship between the distance functions we
introduced $\delta,$ the associated length functions and inner metrics, and a
Riemannian metric built using the kernel functions. Throughout this section we
suppose that $H$ is a RKHS of holomorphic functions on a domain $X $ in
$\mathbb{C}.$ We further suppose that $H$ is nondegenerate in the sense that
$\forall x,y\in X,$ $\exists h,k\in H$ with $h(x)\neq0,k(x)\neq k(x).$ These
restrictions are much more than is needed for most of what follows but some
restrictions are necessary. For instance the results in the next subsection
require that the kernel function $K(x,y)$ be sufficiently smooth so that one
can apply to it the second order Taylor's theorem; some of the results in the
third subsection are specific to one complex variable.

\subsection{Results of Mazur, Pflug, and Skwarcy\'{n}ski}

In an earlier section we described how, for distance functions on the unit
disk, one could pass from a distance to the associated inner distance. That
discussion was not specific to the disk and we now apply those constructions
to distances defined on $X.$ That is, given a distance function $\mathcal{D}$
we define the length of a curve $\gamma$ by $\ell_{\mathcal{D}}(\gamma
)=\sup\left\{  \sum\mathcal{D}(\gamma(t_{i})\gamma(t_{i+1}))\right\}  $ and
the inner distance induced by $\mathcal{D}$ is given by $\mathcal{D}^{\ast
}(x,y)=\inf\left\{  \ell_{\mathcal{D}}(\gamma):\gamma\text{ is a curve from
}x\text{ to }y\right\}  .$ Clearly $\mathcal{D}^{\ast}\geq\mathcal{D}$ and if
the two functions are equal we say $\mathcal{D}$ is an inner distance. For
example Euclidean distance on the plane is an inner distance; the
pseudohyperbolic distance in the disk is not an inner distance, its induced
inner distance is the hyperbolic distance.

The reproducing kernel for the Bergman space of the disk, $\mathcal{H}%
_{2}=B(\mathbb{D)},$ the \textit{Bergman kernel} is $K(x,y)=\left(  1-x\bar
{y}\right)  ^{-2}.$ Using it we can construct a Riemannian metric on the disk
through
\begin{align*}
ds^{2}  &  =\frac{\partial^{2}}{\partial z\partial\bar{z}}\log k_{z}%
(z)\left\vert dz\right\vert ^{2}=\frac{1}{4}\Delta\log k_{z}(z)\left\vert
dz\right\vert ^{2}\\
&  =\frac{2}{\left(  1-\left\vert z\right\vert ^{2}\right)  ^{2}}\left\vert
dz\right\vert ^{2}%
\end{align*}
which is a constant multiple of the density we saw earlier when introducing
the hyperbolic metric on the disk.

More generally, if $X$ is a bounded domain in $\mathbb{C}$ and $H$ is the
Bergman space of $X,$ $H=B(X),$ and $\left\{  k_{z}\right\}  $ are the
reproducing kernels then the formula $ds^{2}=\partial\bar{\partial}\log
k_{z}(z)\left\vert dz\right\vert ^{2}$ defines a Riemannian metric on $X,$ the
so called Bergman metric. There is also an extension of this construction to
domains in $\mathbb{C}^{n}.$

In \cite{MPS} Mazur, Pflug, and Skwarcy\'{n}ski prove three theorems. Suppose
that $X$ is a bounded domain the $\mathbb{C}$ (they actually work with
$\mathbb{C}^{n}).$ Let $H$ be the Bergman space of $X$. For a curve $\gamma$
in $X$ let $\ell_{B}(\gamma)$ be its length measured using the Bergman metric.
For $x,y\in X$ denote the Bergman distance between them by $\delta
_{B}(x,y)=\inf\left\{  \ell_{B}(\gamma):\gamma\text{ a smooth curve from
}x\text{ to }y\right\}  .$

\begin{proposition}[\cite{K}, \cite{MPS}]
\hfill

\begin{enumerate}
\item For any smooth curve $\gamma$
\[
\ell_{\delta}(\gamma)=\ell_{\hat{\delta}}(\gamma)=\ell_{\check{\delta}}%
(\gamma)=\frac{1}{2}\ell_{B}(\gamma).
\]

\item
\[
\delta^{\ast}=\hat{\delta}^{\ast}=\check{\delta}^{\ast}=\check{\delta}%
=\frac{1}{2}\delta_{B}.
\]

\item
\[
\forall x,y\in X,\text{ }x\neq y,\text{ }\hat{\delta}(x,y)<\hat{\delta}^{\ast
}\left(  x,y\right)  .
\]

\end{enumerate}
\end{proposition}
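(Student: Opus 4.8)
The plan is to establish the three claims in sequence, using the infinitesimal comparison of the metrics established earlier. For part (1), the key point is to compute the infinitesimal version of each of $\delta$, $\hat\delta$, $\check\delta$ and compare it with the Bergman length element $ds^2 = \partial\bar\partial \log k_z(z)\,|dz|^2$. Starting from $\delta(x,y)^2 = 1 - |\langle \hat k_x,\hat k_y\rangle|^2$, I would write $|\langle \hat k_x,\hat k_y\rangle|^2 = \dfrac{|K(x,y)|^2}{K(x,x)K(y,y)}$ and expand $\log$ of this quantity to second order in $y-x$ using Taylor's theorem (legitimate because $K$ is real-analytic on a bounded domain and $H$ is nondegenerate, so $K(x,x)>0$). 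The mixed second derivative $\partial_y\bar\partial_{\bar y}\big|_{y=x}\log\frac{|K(x,y)|^2}{K(x,x)K(y,y)}$ reduces exactly to $\partial\bar\partial\log k_x(x)$, so $\delta(x,y)^2 = \tfrac14\, ds^2 + o(|y-x|^2)$, i.e. as $t\to 0$ along a smooth curve $\gamma$, $\delta(\gamma(t),\gamma(t'))/|t-t'| \to \tfrac12\,|\gamma'|_B$ pointwise. Since $\hat\delta = \sqrt2\sqrt{1-|\langle\hat k_x,\hat k_y\rangle|}$ and $1-|u| = \tfrac12(1-|u|^2) + O((1-|u|^2)^2)$, one gets $\hat\delta(x,y)^2 = 2\cdot\tfrac12\,\delta(x,y)^2 + O(\delta^4) = \delta(x,y)^2 + O(\delta^4)$, hence the same infinitesimal element; and the comparison $|\delta_2-\delta_3| = O(\delta_2^3)$ quoted from \cite{K} gives the same for $\check\delta$. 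Integrating these infinitesimal identities along $\gamma$ (dominated convergence / standard Riemann-sum argument, using the estimate \eqref{same} to control the error uniformly on the compact curve) yields $\ell_\delta(\gamma) = \ell_{\hat\delta}(\gamma) = \ell_{\check\delta}(\gamma) = \tfrac12\ell_B(\gamma)$.

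For part (2), taking the infimum over curves joining $x$ to $y$ in the identity from part (1) immediately gives $\delta^\ast = \hat\delta^\ast = \check\delta^\ast = \tfrac12\delta_B$, since $\delta_B$ is by definition the infimum of $\ell_B$. The remaining equality $\check\delta^\ast = \check\delta$ is the statement that the Fubini–Study metric $\check\delta$ is already an inner (geodesic) distance — this is a classical fact about the Fubini–Study metric on $P(H)$ restricted to a submanifold: it is a genuine Riemannian distance, computed by integrating a length element, hence equals its own induced inner distance. I would cite \cite{K} (or \cite{MPS}) for this, or alternatively argue directly that the pull-back of the Fubini–Study Kähler form gives a Riemannian metric on $X$ whose geodesic distance is, on one hand, $\check\delta$ by construction and, on the other hand, $\check\delta^\ast$ since a Riemannian distance is always inner.

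For part (3), the strict inequality $\hat\delta(x,y) < \hat\delta^\ast(x,y)$ for $x\ne y$ expresses that the Skwarcy\'nski metric is never inner (never geodesic). The approach is to examine the function $\phi(t) = 1-\sqrt{1-t}$ appearing in $\hat\delta = \sqrt2\sqrt{\phi(1-|\langle\hat k_x,\hat k_y\rangle|^2)}$ versus $\delta = \sqrt{1-|\langle\hat k_x,\hat k_y\rangle|^2}$: because $\phi$ is strictly concave, for any intermediate point $z$ on a geodesic one gets a strict gain, and summing over a partition of a curve shows $\ell_{\hat\delta}(\gamma) > \hat\delta(x,y)$ whenever $\gamma$ is nonconstant and the endpoints are distinct — equivalently, the "chordal" quantity $\hat\delta$ underestimates the arc length it generates. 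The cleanest route is: $\hat\delta^2 = 2\delta^2/(1+\sqrt{1-\delta^2}) \le \delta^2$ with equality only when $\delta = 0$, combined with a refinement of the triangle-inequality argument showing that for $x\ne y$ there is a genuine strict deficit that cannot be recovered in the infimum. I expect part (3) to be the main obstacle: parts (1) and (2) are essentially Taylor expansion plus taking infima, but the \emph{strictness} in (3) requires ruling out degenerate near-geodesics, and one must argue carefully that the strict concavity gain does not wash out in the limit defining $\hat\delta^\ast$ — here I would follow the argument of \cite{MPS} rather than reconstruct it, pointing out that it uses the explicit form of $\hat\delta$ together with the fact (from part (2)) that $\hat\delta^\ast = \tfrac12\delta_B = \check\delta$ is strictly larger than $\hat\delta$ at distinct points because equality $\hat\delta = \check\delta$ would force the concave function $\phi$ to be linear on a nontrivial interval.
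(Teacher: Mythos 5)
Your treatment of parts (1) and (2) follows essentially the route the paper itself takes (which in turn defers to \cite{K} and \cite{MPS}): a second-order Taylor expansion of $\log\bigl(|K(x,y)|^{2}/K(x,x)K(y,y)\bigr)$ identifying the common infinitesimal form of $\delta$ with the Bergman length element, the third-order agreement (\ref{same}) to transfer this to $\hat{\delta}$ and $\check{\delta}$, integration along the curve, passage to the infimum over curves, and the innerness of $\check{\delta}$ quoted from \cite{K}. That part is sound (modulo the normalizing constant, which is a matter of convention inherited from \cite{MPS}).

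Part (3) is where there is a genuine gap, and it is precisely the part you flag as the obstacle. First, your key inequality is reversed: from $\hat{\delta}^{2}=2\delta^{2}/(1+\sqrt{1-\delta^{2}})$ one gets $\hat{\delta}\geq\delta$ with equality only when $\delta=0$, not $\hat{\delta}\leq\delta$ (indeed the paper records $\hat{\delta}/\sqrt{2}\to\delta$ as $\delta\to1$). Second, the concavity-of-$\phi$/partition argument as stated only yields $\ell_{\hat{\delta}}(\gamma)>\hat{\delta}(x,y)$ for each individual nonconstant curve, which requires the \emph{strict} triangle inequality for $\hat{\delta}$ and in any case does not control the infimum defining $\hat{\delta}^{\ast}$; you acknowledge this and ultimately defer to \cite{MPS}. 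The paper does the same, and in the proof of its generalization (Proposition \ref{inner}) it isolates the needed input: the strict triangle inequality for $\hat{\delta}$, which holds because $\hat{\delta}$ is the Cayley metric pulled back from projective space. A clean self-contained way to finish along the lines of your last sentence is: write $|\langle\hat{k}_{x},\hat{k}_{y}\rangle|=\cos\theta$ with $\theta>0$ for $x\neq y$; then $\hat{\delta}(x,y)=2\sin(\theta/2)$ is a chord length, while by part (2) $\hat{\delta}^{\ast}(x,y)=\check{\delta}(x,y)\geq\theta$ is the corresponding Fubini--Study arc length, and $2\sin(\theta/2)<\theta$ for $\theta>0$. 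Your phrase about the concave function being ``forced to be linear'' gestures at this but does not substitute for it, and as written part (3) is not proved.
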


In particular (up to a constant factor) the Bergman distance is the inner
distance generated by our metrics and the metric $\hat{\delta}$ is never an
inner metric.

The results for $\hat{\delta}$ are proved in \cite{MPS} (with an unfortunate
typo in the statement of Theorem 2 there). We noted that locally the three
distances in (\ref{same}) agree to third order. Hence the three metrics
generate the same length function and same inner distance. Because of this the
results for $\delta$ and $\check{\delta}$ follow from the ones for
$\hat{\delta}.$ The equality $\check{\delta}^{\ast}=\check{\delta}$, i.e. the
statement that $\check{\delta}$ is an inner distance, follows from the
discussion in \cite{K}.

The discussion in \cite{K} \cite{MPS} is given for Bergman spaces,
$B(\Omega).$ However the results hold in more generality. Given $X$ and $H$ of
the type we are considering, with reproducing kernels $\left\{  k_{z}\right\}
,$ there is a standard associated Riemannian metric given by
\begin{equation}
ds_{H}^{2}=\left(  \frac{\partial^{2}}{\partial z\partial\bar{z}}\log
k_{z}(z)\right)  \left\vert dz\right\vert ^{2}. \label{ds}%
\end{equation}
If we define functions $k_{z}^{(1)}$ in $H$ by requiring that for all $f\in H
$ $\left\langle f,k_{z}^{(1)}\right\rangle =f^{\prime}(z)$ then one can
compute that
\[
ds_{H}^{2}=\frac{\left\Vert k_{z}^{(1)}\right\Vert ^{2}\left\Vert
k_{z}^{\text{ }}\right\Vert ^{2}-\left\vert \left\langle k_{z}^{(1)}%
,k_{z}\right\rangle \right\vert ^{2}}{\left\Vert k_{z}^{\text{ }}\right\Vert
^{4}}\left\vert dz\right\vert ^{2}.
\]
One can then define the Bergman style length of a curve $\gamma,$ $\ell
_{BS}(\gamma),$ to be the length of $\gamma$ measured using $ds_{H}$ and can
set
\[
\delta_{BS}\left(  x,y\right)  =\inf\left\{  \ell_{BS}(\gamma):\gamma\text{ a
smooth curve connecting }x\text{ to }y\right\}  .
\]

We have defined $\delta=\delta_{H}$ for such an $H.$ We define $\hat{\delta
}=\hat{\delta}_{H}$ using (\ref{def2}). We define $\check{\delta}%
=\check{\delta}_{H}$ by following Kobayashi's prescription. We have a map of
$X$ into the $P(H)$ which sends $x$ to $p_{x}=\left[  k_{x}\right]  .$ We use
that map to we pull back the Fubini-Study metric on $P(H)$ back to $X$ and
call the resulting metric $\check{\delta}.$

\begin{proposition}
\label{inner}
\hfill
\begin{enumerate}
\item For any smooth curve $\gamma$
\[
\ell_{\delta}(\gamma)=\ell_{\hat{\delta}}(\gamma)=\ell_{\check{\delta}}%
(\gamma)=\frac{1}{2}\ell_{BS}(\gamma).
\]

\item
\[
\delta^{\ast}=\hat{\delta}^{\ast}=\check{\delta}^{\ast}=\check{\delta}%
=\frac{1}{2}\delta_{BS}.
\]

\item
\[
\forall x,y\in X,x\neq y,\hat{\delta}(x,y)<\hat{\delta}^{\ast}\left(
x,y\right)  .
\]

\item
\[
\forall x,y\in X,x\neq y,\delta(x,y)<\delta^{\ast}\left(  x,y\right)  .
\]

\end{enumerate}

\begin{proof}
The proof in \cite{MPS} of versions of the first two statements are based on
the second order Taylor approximations to the kernel functions; hence those
proofs apply here as does the discussion in \cite{K} which shows that
$\check{\delta}$ is an inner metric. The third statement follows from the
proof in \cite{MPS} together with the fact that for any $a,b,c\in X$ we have
the strict inequality $\hat{\delta}(a,c)<\hat{\delta}(a,b)+\hat{\delta}(b,c).$
The proof of that in \cite{MPS} does not use the fact that $H$ is a Bergman
space, rather it uses the fact that $\hat{\delta}$ was obtained by pulling the
Cayley metric back from projective space, which also holds in our context.{}
The fourth statement also follows from the proof in \cite{MPS} if we can
establish the fact that for any $a,b,c\in X$ we have the strict inequality
$\delta(a,c)<\delta(a,b)+\delta(b,c).$ We will obtain that from Proposition
\ref{norm}. We need to rule out the possibility that
\begin{equation}
\left\Vert P_{a}-P_{c}\right\Vert =\left\Vert P_{a}-P_{b}\right\Vert
+\left\Vert P_{b}-P_{c}\right\Vert . \label{equality}%
\end{equation}
The operator $P_{a}-P_{c}$ is a rank two self adjoint operator. Hence hence it
has a unit eigenvector, $v,$ with%
\[
\left\Vert \left(  P_{a}-P_{c}\right)  (v)\right\Vert =\left\Vert P_{a}%
-P_{c}\right\Vert .
\]
For both of the two previous equalities to hold we must also have
\[
\left\Vert \left(  P_{a}-P_{b}\right)  (v)\right\Vert =\left\Vert P_{a}%
-P_{b}\right\Vert \text{, }\left\Vert \left(  P_{b}-P_{c}\right)
(v)\right\Vert =\left\Vert P_{b}-P_{c}\right\Vert
\]
and hence $v$ must also be an eigenvector of $P_{a}-P_{b}$ and also of
$P_{b}-P_{c}.$ 

However in our analysis in Proposition \ref{norm} we
saw that an eigenvector for an operator $P_{x}-P_{y}$ must be in $%
{\textstyle\bigvee}
\left\{  k_{x},k_{y}\right\}  ,$ the span of $k_{x}$ and $k_{y}.$ Thus
\[
v\in%
{\textstyle\bigvee}
\left\{  k_{a},k_{c}\right\}
{\textstyle\bigcap}
{\textstyle\bigvee}
\left\{  k_{a},k_{b}\right\}
{\textstyle\bigcap}
{\textstyle\bigvee}
\left\{  k_{b},k_{c}\right\}  =\left\{  0\right\}  ,
\]
a contradiction.
\end{proof}
\end{proposition}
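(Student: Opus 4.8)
The plan is to establish Proposition~\ref{inner} by reducing statements (1)--(3) to the work already done in \cite{MPS} and \cite{K}, and then to supply the one genuinely new ingredient, namely the strict triangle inequality for $\delta$ needed for statement (4). First I would observe that the arguments in \cite{MPS} that produce the identities in (1) and (2) never use the fact that $H$ is a Bergman space; they only use that the kernel $K(x,y)$ admits a second-order Taylor expansion near the diagonal, which is exactly the standing smoothness hypothesis of this section. So the computation $\ell_\delta(\gamma) = \ell_{\hat\delta}(\gamma) = \ell_{\check\delta}(\gamma) = \tfrac12\ell_{BS}(\gamma)$ follows by transcribing those local estimates, using \eqref{same} to see that the three metrics $\delta,\hat\delta,\check\delta$ agree to third order and hence generate the same length functional, and then passing to the infimum over curves to get the equalities of inner distances; the identity $\check\delta^\ast = \check\delta$ is precisely the statement in \cite{K} that the pulled-back Fubini--Study metric is already an inner (geodesic) metric, and again the proof there is not special to Bergman spaces.

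For statement (3), the key point is that $\hat\delta$ is obtained by pulling back the Cayley (quotient) metric from $P(H)$, and the argument in \cite{MPS} that $\hat\delta$ is never its own inner distance rests only on the strict triangle inequality $\hat\delta(a,c) < \hat\delta(a,b) + \hat\delta(b,c)$ for distinct points together with that quotient-metric structure. I would point out that strictness here comes from the geometry of the unit sphere $S(H)$: the infimum defining $\hat\delta(p_x,p_y)$ is a chordal distance between circles, and equality in the triangle inequality for such chordal distances forces the three relevant unit vectors to be collinear in a real two-plane in a way incompatible with the points being distinct. So (3) transfers verbatim.

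The new work is statement (4), and for that I would use Proposition~\ref{norm}, which identifies $\delta(x,y)$ with $\|P_x - P_y\|$. It suffices to rule out equality in $\|P_a-P_c\| = \|P_a-P_b\| + \|P_b-P_c\|$ for distinct $a,b,c$. Since $P_a - P_c$ is rank-two self-adjoint, it attains its operator norm on a unit eigenvector $v$; if equality held in the triangle inequality, then $v$ would simultaneously realize the norms of $P_a-P_b$ and $P_b-P_c$, forcing $v$ to be an eigenvector of all three difference operators $P_a-P_c$, $P_a-P_b$, $P_b-P_c$. But the eigenvector analysis inside the proof of Proposition~\ref{norm} shows that any eigenvector of $P_x-P_y$ with nonzero eigenvalue lies in $\bigvee\{k_x,k_y\}$, the span of the two kernel functions. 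Hence $v$ lies in the intersection $\bigvee\{k_a,k_c\} \cap \bigvee\{k_a,k_b\} \cap \bigvee\{k_b,k_c\}$, and I would argue that this intersection is $\{0\}$ (generically — and that is exactly what the nondegeneracy hypothesis on $H$ secures), contradicting $\|v\|=1$. The main obstacle is this last linear-algebra claim: one must check carefully that three distinct kernel-function lines cannot all lie in a common two-dimensional subspace in a way that makes the triple intersection nontrivial, handling the degenerate sub-cases (two of the kernels parallel, or all three coplanar) and invoking that $H$ separates points so that the $k_a,k_b,k_c$ are pairwise non-proportional. Once that is settled, the strict inequality $\delta(a,c) < \delta(a,b)+\delta(b,c)$ follows, and with it the proof in \cite{MPS} of the non-inner property adapts to give $\delta(x,y) < \delta^\ast(x,y)$ for $x \neq y$.
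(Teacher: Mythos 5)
Your proposal follows the paper's own proof essentially step for step: parts (1)--(3) are delegated to \cite{MPS} and \cite{K} exactly as the authors do, and part (4) is reduced to the strict triangle inequality for $\delta$ via Proposition \ref{norm}, the unit eigenvector attaining $\left\Vert P_{a}-P_{c}\right\Vert$, and the observation that eigenvectors of $P_{x}-P_{y}$ with nonzero eigenvalue lie in $\bigvee\{k_{x},k_{y}\}$. The one step you single out as the ``main obstacle'' --- that $\bigvee\{k_{a},k_{c}\}\cap\bigvee\{k_{a},k_{b}\}\cap\bigvee\{k_{b},k_{c}\}=\{0\}$ --- is precisely the step the paper asserts without comment, and your hesitation is warranted: pairwise non-proportionality of $k_{a},k_{b},k_{c}$, which is all the standing nondegeneracy hypothesis delivers, does not prevent the three kernel functions from spanning a common two-dimensional subspace, and in that case all three pairwise spans coincide with that plane and the triple intersection is the whole plane rather than $\{0\}$. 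When $k_{a},k_{b},k_{c}$ are linearly independent the intersection really is trivial (the first two spans meet exactly in $[k_{a}]$, which misses the third span), so the argument closes as you and the paper describe; in the coplanar case one needs a separate, elementary two-dimensional check --- for instance, writing $\left\Vert P_{x}-P_{y}\right\Vert=\sin\theta_{xy}$ with $\theta_{xy}=\arccos\left\vert\left\langle\hat{k}_{x},\hat{k}_{y}\right\rangle\right\vert$ and using that $\theta$ satisfies the triangle inequality on lines while $\sin$ is strictly subadditive on positive angles --- to conclude that equality in $\left\Vert P_{a}-P_{c}\right\Vert=\left\Vert P_{a}-P_{b}\right\Vert+\left\Vert P_{b}-P_{c}\right\Vert$ is still impossible. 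So your plan is sound and matches the paper's, but to be complete you should supply (or at least explicitly dispose of) that degenerate case, which the paper itself glosses over.
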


\subsection{The Berezin Transform and Lipschitz Estimates}

Suppose $A$ is a bounded linear map of $H$ to itself. The Berezin transform of
$A$ is the scalar function defined on $X$ by the formula%
\[
\hat{A}(x)=\left\langle A\hat{k}_{x},\hat{k}_{x}\right\rangle .
\]
For example, if $P_{a}$ is the orthogonal projection onto the span of $k_{a}$
then%
\[
\widehat{P_{a}}(x)=1-\delta^{2}\left(  a,x\right)  .
\]
Also, recalling Proposition \ref{multiplier}, we have the following. Suppose
$m,n\in M(H)$ and that $M$ and $N$ are the associated multiplication operators
on $H.$ We then have
\[
\widehat{MN^{\ast}}(x)=\widehat{M}(x)\overline{\widehat{N}(x)}=m(x)\overline
{n(x)}.
\]

Coburn showed that the metric $\delta$ is a natural tool for studying the
smoothness of Berezin transforms.

\begin{proposition}
[Coburn \cite{CO2}]If $A$ is a bounded linear operator on $H,$ $x,y\in X$ then%
\begin{equation}
\left\vert \hat{A}(x)-\hat{A}(y)\right\vert \leq2\left\Vert A\right\Vert
\delta(x,y). \label{berezin}%
\end{equation}
Thus, also, if $m\in M(H)$ and $M$ is the associated multiplication operator
then
\begin{equation}
\left\vert m(x)-m(y)\right\vert \leq2\left\Vert M\right\Vert \delta(x,y).
\label{mult}%
\end{equation}
Estimate (\ref{berezin}) is sharp in the sense that given $H,$ $x,$ and $y$
one can select $A$ so that equality holds.

\begin{proof}
It is standard that if $A$ is bounded and $T$ is trace class then $AT$ is
trace class and $\left\vert \operatorname*{Trace}(AT)\right\vert
\leq\left\Vert A\right\Vert \left\Vert T\right\Vert _{\operatorname*{Trace}}.$
Recall that $P_{x}$ is the orthogonal projection onto the span of $k_{x}.$
Direct computation shows $\operatorname*{Trace}(AP_{x})=\hat{A}(x).$ Thus
$\left\vert \hat{A}(x)-\hat{A}(y)\right\vert \leq2\left\Vert A\right\Vert
\left\Vert P_{x}-P_{y}\right\Vert _{\operatorname*{Trace}}.$ The proofs of
(\ref{berezin}) and (\ref{mult}) are then completed by taking note of
Proposition \ref{norm}. To see that the result is sharp evaluate both sides
for the choice $A=P_{x}-P_{y}.$ Details of those computation are in the proof
of Proposition \ref{norm}.
\end{proof}
\end{proposition}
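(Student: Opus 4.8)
The plan is to reduce the statement to the operator-norm identity of Proposition \ref{norm}, exactly as Coburn does. First I would recall the basic Hilbert-Schmidt/trace-class fact that for a bounded operator $A$ and a trace-class operator $T$ one has $AT$ trace class with $\lvert\operatorname{Trace}(AT)\rvert\leq\left\Vert A\right\Vert\left\Vert T\right\Vert_{\operatorname{Trace}}$; this is standard and I would simply cite it. The only genuinely space-specific input is the computation $\operatorname{Trace}(AP_{x})=\hat A(x)$, which follows because $P_{x}=\hat k_{x}\rangle\langle\hat k_{x}$ is rank one, so taking an orthonormal basis of $H$ whose first vector is $\hat k_{x}$ one sees immediately that the only nonzero diagonal entry of $AP_{x}$ is $\left\langle A\hat k_{x},\hat k_{x}\right\rangle=\hat A(x)$.

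Given those two ingredients the proof of the inequality is one line: write
\[
\hat A(x)-\hat A(y)=\operatorname{Trace}(AP_{x})-\operatorname{Trace}(AP_{y})=\operatorname{Trace}\bigl(A(P_{x}-P_{y})\bigr),
\]
apply the trace estimate to get $\lvert\hat A(x)-\hat A(y)\rvert\leq\left\Vert A\right\Vert\left\Vert P_{x}-P_{y}\right\Vert_{\operatorname{Trace}}$, and then invoke Proposition \ref{norm}, which says $\left\Vert P_{x}-P_{y}\right\Vert_{\operatorname{Trace}}=2\delta(x,y)$. That gives (\ref{berezin}). For (\ref{mult}) I would specialize $A=MN^{\ast}$ with $N=M$, or more directly note that for a multiplier $m$ with operator $M$ the Berezin transform is $\widehat{M}(x)=\left\langle M\hat k_{x},\hat k_{x}\right\rangle$; since $M^{\ast}k_{x}=\overline{m(x)}k_{x}$ by Proposition \ref{multiplier}, we get $\widehat{M}(x)=m(x)$, and $\left\Vert M\right\Vert$ is the operator norm of $M$, so (\ref{mult}) is just (\ref{berezin}) applied to $A=M$.

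For sharpness I would plug $A=P_{x}-P_{y}$ into both sides. The left side is $\lvert\widehat{P_{x}-P_{y}}(x)-\widehat{P_{x}-P_{y}}(y)\rvert$; using $\widehat{P_{a}}(z)=1-\delta^{2}(a,z)$ from the Berezin-transform discussion, $\widehat{P_{x}-P_{y}}(x)=1-(1-\delta^{2}(x,y))=\delta^{2}(x,y)$ and $\widehat{P_{x}-P_{y}}(y)=(1-\delta^{2}(x,y))-1=-\delta^{2}(x,y)$, so the difference has absolute value $2\delta^{2}(x,y)$. The right side is $2\left\Vert P_{x}-P_{y}\right\Vert\delta(x,y)=2\delta(x,y)\cdot\delta(x,y)=2\delta^{2}(x,y)$ by Proposition \ref{norm}. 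So equality holds.

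I do not expect any real obstacle here: everything is a bookkeeping consequence of Proposition \ref{norm} plus the rank-one trace computation. The one place to be slightly careful is the identity $\operatorname{Trace}(AP_{x})=\hat A(x)$ when $H$ is infinite dimensional — one should note $P_{x}$ is trace class (rank one) so $AP_{x}$ is trace class and its trace is genuinely the sum of diagonal entries in any orthonormal basis, independent of the basis; choosing the basis adapted to $\hat k_{x}$ then makes the evaluation immediate.
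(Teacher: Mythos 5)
Your proposal is correct and follows essentially the same route as the paper: the trace-class inequality $\lvert\operatorname{Trace}(AT)\rvert\leq\left\Vert A\right\Vert\left\Vert T\right\Vert_{\operatorname{Trace}}$, the rank-one computation $\operatorname{Trace}(AP_{x})=\hat A(x)$, and then Proposition \ref{norm} to convert $\left\Vert P_{x}-P_{y}\right\Vert_{\operatorname{Trace}}$ into $2\delta(x,y)$, with sharpness checked on $A=P_{x}-P_{y}$. Your write-up is in fact slightly cleaner, since you carry out the sharpness evaluation explicitly and avoid a spurious factor of $2$ that appears in the paper's intermediate inequality.
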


\begin{remark}
By analysis of the two by two Pick matrix of $M^{\ast}$ one sees that
(\ref{mult}) is not sharp.
\end{remark}

Suppose $\gamma:\left(  0,1\right)  \rightarrow X$ is a continuous curve in
$X$ and that $f$ is a function defined in a neighborhood of the curve. We
define the variation of $f$ along the curve to be%
\[
\operatorname*{Var}_{\gamma}(f)=\sup\left\{  \sum_{i=1}^{n-1}\left\vert
f(t_{i})-f(t_{i+1})\right\vert :0<t_{1}<\cdot\cdot\cdot<t_{n}<1,\text{
}n=1,2,...\right\}  .
\]

\begin{corollary}
With $H,X,\gamma,A$ as above:
\[
\operatorname*{Var}_{\gamma}(\hat{A})\leq2\left\Vert A\right\Vert \ell
_{\delta}(\gamma)=\left\Vert A\right\Vert \ell_{BS}(\gamma).
\]

\begin{proof}
If we start with a sum estimating the variation of $\hat{A}$ and apply the
previous proposition to each summand we obtain the first estimate. The second
inequality follows from the first and Proposition \ref{inner}.
\end{proof}
\end{corollary}

These issues are also studied when $X$ has dimension greater than 1 and there
is a rich relationship between the properties of Berezin transforms and the
differential geometry associated with $ds_{BS}^{2},$ \cite{CO}, \cite{CO2},
\cite{CL}, \cite{EZ}, \cite{EO}, \cite{BO}.

\subsection{Limits Along Curves\label{lac}}

For the most commonly considered examples of a RKHS any curve in $X$ which
leaves every compact subset of $X$ has infinite length when measured by any of
the length functions we have been considering. However this not always the
case. For example, if $X$ is the open unit disk and $H$ is defined by the
kernel function
\[
K(z,y)=\frac{2-z-\bar{y}}{1-\bar{y}z}%
\]
then straightforward estimates show that along the positive axis%
\[
ds_{BS}=\left(  \frac{1}{2\sqrt{1-r}}+o\left(  \frac{1}{\sqrt{1-r}}\right)
\right)  dr;
\]
hence the curve $[0,1)$ has finite length. For more discussion of this see
\cite{Mc}. This suggests there may be interesting limiting behavior as one
traverses the curve. Suppose $f$ is a function defined on $\gamma$ except at
the endpoints. Straightforward analysis then shows that if
$\operatorname*{Var}_{\gamma}(f)<\infty$ then $f$ has limiting values along
$\gamma$ as one approaches the endpoints. Thus

\begin{corollary}
Given $H,X;$ suppose $\gamma:[0,1)\rightarrow X$ and $\ell_{BS}(\gamma
)<\infty$ then if $m$ is any multiplier of $H$ or, more generally, if $\hat
{A}$ is the Berezin transform of any bounded operator on $H,$ then these
functions have limits along $\gamma;$%
\[
\exists\lim_{t\rightarrow1}m(\gamma(t));\text{ }\exists\lim_{t\rightarrow
1}\hat{A}(\gamma(t)).
\]
Furthermore there are choices of $m$ or $A$ for which the limits are not zero.
\end{corollary}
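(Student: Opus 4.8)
The plan is to reduce everything to the one-dimensional fact, quoted just before the corollary, that a real- or complex-valued function of bounded variation along a curve has limits at the endpoints of that curve. So the first step is to observe that by the preceding Corollary, if $\ell_{BS}(\gamma)<\infty$ then $\ell_\delta(\gamma)=\tfrac12\ell_{BS}(\gamma)<\infty$ as well, and hence for any bounded operator $A$ on $H$ the Berezin transform satisfies $\operatorname{Var}_\gamma(\hat A)\le 2\|A\|\,\ell_\delta(\gamma)<\infty$. Since $\hat A$ is a scalar function with finite variation along $\gamma$, the quoted one-variable principle gives that $\lim_{t\to1}\hat A(\gamma(t))$ exists. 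For a multiplier $m\in M(H)$, apply this to $A=M_m$: by the discussion of the Berezin transform (using Proposition \ref{multiplier}, $M_m^\ast k_x=\overline{m(x)}k_x$), we have $\widehat{M_m}(x)=\langle M_m\hat k_x,\hat k_x\rangle = m(x)$, so the existence of $\lim_{t\to1}m(\gamma(t))$ is the special case $A=M_m$. That settles the existence assertions.

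For the ``furthermore'' clause I would exhibit an explicit nonzero limit. The cleanest choice is a rank-one operator: fix any point $a\in X$ and take $A=P_a$, the projection onto the span of $k_a$. Then $\hat A(x)=\widehat{P_a}(x)=1-\delta(a,x)^2$, which is automatically a function of bounded variation along $\gamma$ (it is a Berezin transform of a bounded operator, so the Corollary applies), and its limit along $\gamma$ is $1-\lim_{t\to1}\delta(a,\gamma(t))^2$. This limit is not zero unless $\delta(a,\gamma(t))\to1$; choosing $a=\gamma(0)$ and noting $\ell_\delta(\gamma)<\infty$ forces $\delta(\gamma(0),\gamma(t))\le\ell_\delta(\gamma)<\infty$, and in fact one can pick the kernel/example so that this stays bounded away from $1$ — e.g. in the concrete example $K(z,y)=(2-z-\bar y)/(1-\bar y z)$ on the disk with $\gamma=[0,1)$, a direct computation of $\delta(0,r)^2=1-|K(0,r)|^2/(K(0,0)K(r,r))$ shows the limit as $r\to1$ is strictly less than $1$, so $\widehat{P_0}(\gamma(t))$ has a nonzero limit. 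For a multiplier with nonzero limit, note that any space with a nonconstant bounded multiplier $m$ that extends continuously to the relevant endpoint will do; alternatively, on that same example one checks $M(H)$ contains functions (e.g. restrictions of functions holomorphic past the point $z=1$) whose value at the endpoint of $\gamma$ is nonzero.

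The only real subtlety is the quoted one-variable lemma itself — that finite variation along $\gamma$ implies existence of endpoint limits — but this is standard: parametrize so that the partial-variation function $s\mapsto\operatorname{Var}_{\gamma|_{[0,s]}}(f)$ is nondecreasing and bounded, hence has a limit; the Cauchy criterion for $f(\gamma(t))$ as $t\to1$ then follows from $|f(\gamma(t))-f(\gamma(t'))|\le\operatorname{Var}_{\gamma|_{[t,t']}}(f)\to0$. I would state this as the ``straightforward analysis'' already invoked in the text and not belabor it. The main thing to be careful about in writing up the nonzero-limit claim is to make the example concrete enough that the reader can verify $\delta(\gamma(0),\gamma(t))\not\to1$; deferring to the reference \cite{Mc} for the finiteness of $\ell_{BS}([0,1))$ in that example, and doing the short $\delta(0,r)$ computation explicitly, keeps this self-contained.
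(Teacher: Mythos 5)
Your existence argument is exactly the paper's: the preceding corollary gives $\operatorname*{Var}_{\gamma}(\hat{A})\leq\left\Vert A\right\Vert \ell_{BS}(\gamma)<\infty$, the one-variable bounded-variation principle (which the paper also leaves as "straightforward analysis," and which you correctly sketch via the Cauchy criterion) gives the limit, and the multiplier case is the special case $\widehat{M_{m}}=m$ via Proposition \ref{multiplier}. That part is complete and needs no change.

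The "furthermore" clause is where your argument has a gap. For general $H,X,\gamma$ you take $A=P_{\gamma(0)}$ and note $\delta(\gamma(0),\gamma(t))\leq\ell_{\delta}(\gamma)<\infty$; but finiteness of $\ell_{\delta}(\gamma)$ does not bound $\delta(\gamma(0),\gamma(t))$ away from $1$, since $\ell_{\delta}(\gamma)$ may well exceed $1$ even though $\delta\leq1$ pointwise. You acknowledge this and retreat to verifying the claim for the one concrete kernel $K(z,y)=(2-z-\bar{y})/(1-\bar{y}z)$, but the corollary asserts the existence of such $m$ or $A$ for \emph{every} $H,X,\gamma$ satisfying the hypothesis, so a single example does not close the argument. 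Two easy repairs: (i) the trivial one --- take $m\equiv1$ or $A=I$, whose Berezin transform is identically $1$, so the limit is $1\neq0$; or (ii) if you want a nontrivial rank-one example, take $a=\gamma(s)$ with $s$ close enough to $1$ that the tail length $\ell_{\delta}(\gamma|_{[s,1)})<1/2$; then for all $t>s$ one has $\delta(\gamma(s),\gamma(t))\leq\ell_{\delta}(\gamma|_{[s,1)})<1/2$, hence $\widehat{P_{\gamma(s)}}(\gamma(t))=1-\delta(\gamma(s),\gamma(t))^{2}>3/4$ and the limit is nonzero. Either fix makes the clause hold in the stated generality.
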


This invites speculation that something similar might be true for $H$, however
that situation is more complicated. If we rescale the space then we do not
change $\delta$ and thus don't change the class of curves of finite length.
However rescaling certainly can change the validity of statements that
functions in the space have limits along certain curves. Thus the possibility
of rescaling a space is an obstacle to having a result such as the previous
corollary for functions in $H.$ McCarthy showed in \cite{Mc} that, in some
circumstances, this is the only obstacle. We state a version of his result but
will not include the proof.

\begin{proposition}
[McCarthy \cite{Mc}]Suppose $H$ is a RKHS of holomorphic functions on the
disk and that $\gamma:[0,1)\rightarrow\mathbb{D}$ and $\ell_{BS}%
(\gamma)<\infty.$ There is a holomorphic function $G$ such that every function
in the rescaled space $GH $ has a limit along $\gamma.$ That is
\[
\exists G,\text{ }\forall h\in H,\text{ }\exists\lim_{t\rightarrow1}%
G(\gamma(t))h(\gamma(t)).
\]
Furthermore $G$ can be chosen so that for some $h\in H$ this limit is not zero.
\end{proposition}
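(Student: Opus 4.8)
\bigskip

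\noindent\emph{Proof proposal.}
The plan is to build the rescaling $G$ so that the kernel functions of $GH$ converge along $\gamma$; the conclusion then drops out of the reproducing property. Recall from the rescaling discussion that, for nonvanishing holomorphic $G$ on $\mathbb{D}$, the space $GH$ is an RKHS on $\mathbb{D}$ with $K_{GH}(x,y)=G(x)\overline{G(y)}K(x,y)$, and that $\iota\colon GH\to H$, $\iota(Gh)=h$, is a unitary carrying $k_{GH,y}$ to $\overline{G(y)}\,k_{y}$. So it suffices to find $G$ for which $t\mapsto k_{GH,\gamma(t)}$ converges in $GH$, as $t\to 1$, to some $v\neq 0$: then for each $h\in H$, with $h'=Gh\in GH$, the reproducing property gives $G(\gamma(t))h(\gamma(t))=h'(\gamma(t))=\langle h',k_{GH,\gamma(t)}\rangle_{GH}\to\langle h',v\rangle_{GH}$, and choosing $h=\iota(v)$ makes this limit $\|v\|^{2}\neq 0$. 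Two facts will drive the construction: a curve of finite length in a Hilbert space is Cauchy as the parameter tends to the endpoint, so it is enough to bound the length of $t\mapsto k_{GH,\gamma(t)}$; and $v\neq 0$ will follow once $\|k_{GH,\gamma(t)}\|=|G(\gamma(t))|\,\|k_{\gamma(t)}\|$ is bounded below. I will assume $\gamma\in C^{1}$ (otherwise reparametrize by $ds_{H}$-arclength and work with a.e.\ derivatives).

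Next I would carry out the basic computation. Write $w(z)^{2}=\partial_{z}\partial_{\bar z}\log K(z,z)\ (\ge 0)$ for the density of $ds_{H}^{2}$ in (\ref{ds}), put $\Phi(z)=\partial_{z}\log K(z,z)$ so that $\bar\partial\Phi=w^{2}$, and let $\mu=(\log G)'$, which may be prescribed to be any holomorphic function on $\mathbb{D}$ (then $G=\exp\!\int\mu$ is holomorphic and nonvanishing, $\mathbb{D}$ being simply connected). Differentiating $t\mapsto\overline{G(\gamma(t))}\,k_{\gamma(t)}$ in $H$, using $\frac{d}{dt}k_{\gamma(t)}=\overline{\gamma'(t)}\,k_{\gamma(t)}^{(1)}$ and the definition of $k_{z}^{(1)}$, gives
\[
\Bigl\|\tfrac{d}{dt}k_{GH,\gamma(t)}\Bigr\|=|\gamma'(t)|\,|G(\gamma(t))|\;\bigl\|\,\overline{\mu(\gamma(t))}\,k_{\gamma(t)}+k_{\gamma(t)}^{(1)}\bigr\|.
\]
The scalar minimizing $\|u\,k_{z}+k_{z}^{(1)}\|$ is $u^{\ast}(z)=-\langle k_{z}^{(1)},k_{z}\rangle/\|k_{z}\|^{2}$, which a short computation identifies with $-\overline{\Phi(z)}$; the minimum equals $\|k_{z}\|\,w(z)$ by the formula for $ds_{H}^{2}$. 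Setting $\eta=\mu+\Phi$ (so ``$\mu$ holomorphic'' is exactly ``$\bar\partial\eta=w^{2}$''), we have $\overline{\mu}\,k_{z}+k_{z}^{(1)}=(u^{\ast}k_{z}+k_{z}^{(1)})+\overline{\eta}\,k_{z}$, hence with $N(t)=\|k_{GH,\gamma(t)}\|$,
\[
\Bigl\|\tfrac{d}{dt}k_{GH,\gamma(t)}\Bigr\|\le N(t)\bigl(w(\gamma(t))+|\eta(\gamma(t))|\bigr)\,|\gamma'(t)|.
\]
Moreover $\frac{d}{ds}\log K(\gamma(s),\gamma(s))=2\operatorname{Re}\!\bigl(\Phi(\gamma(s))\gamma'(s)\bigr)$ and $\frac{d}{ds}\log G(\gamma(s))=\mu(\gamma(s))\gamma'(s)$, so
\[
\log N(t)^{2}=\mathrm{const}+2\operatorname{Re}\!\int_{0}^{t}\eta(\gamma(s))\,\gamma'(s)\,ds.
\]
Therefore, if $\int_{\gamma}|\eta|\,|dz|<\infty$ then $N$ is bounded above and below, and the length of $t\mapsto k_{GH,\gamma(t)}$ is at most $(\sup N)\bigl(\ell_{BS}(\gamma)+\int_{\gamma}|\eta|\,|dz|\bigr)<\infty$, since $\ell_{BS}(\gamma)=\int_{\gamma}w\,|dz|<\infty$ (equivalently $\ell_{\delta}(\gamma)<\infty$, Proposition~\ref{inner}). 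So everything reduces to one analytic point.

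The crux — and the step I expect to be the main obstacle — is to produce a smooth $\eta$ on $\mathbb{D}$ with $\bar\partial\eta=w^{2}=\partial_{z}\partial_{\bar z}\log K(z,z)$ and $\int_{\gamma}|\eta|\,|dz|<\infty$; equivalently, a holomorphic $g=\mu$ on $\mathbb{D}$ that approximates $-\Phi$ along $\gamma$ in $L^{1}(\gamma,|dz|)$ (note $\bar\partial(-\Phi)=-w^{2}$, so $-\Phi$ is ``within $w^{2}$ of being holomorphic''). My plan here is to solve $\bar\partial$ first in a thin tube $U$ about the relatively closed set $\gamma([0,1))$: at the natural scale $\sim 1/w(z_{0})$ around a point $z_{0}$ of the curve, the Cauchy-transform solution $\frac{1}{\pi}\int \frac{w(\zeta)^{2}}{z-\zeta}\,dA(\zeta)$ over a disk of that radius is $O(w(z_{0}))$, and the task is to patch these local solutions along $\gamma$ (partition of unity on $U$) without spoiling the $O(w)$ bound, so that $\int_{\gamma}|\eta|\,|dz|\le C\int_{\gamma}w\,|dz|=C\,\ell_{BS}(\gamma)<\infty$; one then extends the result from $U$ to all of $\mathbb{D}$ keeping $\bar\partial\eta=w^{2}$ (Dolbeault, $H^{1}(\mathbb{D},\mathcal{O})=0$; the extension is unconstrained away from $U$). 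Controlling the patching is precisely where the finiteness of $\ell_{BS}(\gamma)$ must be used quantitatively, through the geometry of $\gamma$ near $\partial\mathbb{D}$ (a Whitney-type chain of disks along $\gamma$ whose total metric length is $\lesssim\ell_{BS}(\gamma)$). Granting such an $\eta$, set $\mu=\eta-\Phi$, $G=\exp\!\int\mu$; by the reductions above $v=\lim_{t\to 1}k_{GH,\gamma(t)}$ exists in $GH$ with $\|v\|\ge N(0)\,e^{-\int_{\gamma}|\eta|\,|dz|}>0$, and then $h=\iota(v)\in H$ satisfies $\lim_{t\to 1}G(\gamma(t))h(\gamma(t))=\|v\|^{2}>0$, which in particular gives the final ``not zero'' assertion.
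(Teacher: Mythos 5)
The paper states this proposition without proof --- it explicitly says ``We state a version of his result but will not include the proof'' and defers to McCarthy \cite{Mc} --- so there is no internal argument to compare against; I am assessing your proposal on its own terms. Your reduction is correct and cleanly organized: transporting everything to $H$ by the unitary $Gh\mapsto h$, the curve $t\mapsto\overline{G(\gamma(t))}\,k_{\gamma(t)}$ has speed $|\gamma'(t)|\,|G(\gamma(t))|\,\|\overline{\mu}\,k_{\gamma(t)}+k_{\gamma(t)}^{(1)}\|$ with $\mu=G'/G$; the optimal scalar in front of $k_{z}$ is indeed $-\overline{\Phi(z)}$ with minimal value $w(z)\|k_{z}\|$ by the formula for $ds_{H}^{2}$ in (\ref{ds}); and the identity $\log\|k_{GH,\gamma(t)}\|^{2}=\mathrm{const}+2\operatorname{Re}\int_{0}^{t}\eta(\gamma(s))\gamma'(s)\,ds$ with $\eta=\mu+\Phi$ is right. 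So the theorem does reduce, exactly as you say, to producing a holomorphic $\mu$ on $\mathbb{D}$ with $\int_{\gamma}|\mu+\Phi|\,|dz|<\infty$, equivalently a solution of $\bar{\partial}\eta=w^{2}$ integrable against arclength on $\gamma$. You have located the crux correctly.

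But that crux is not proved, and it is the entire content of the theorem rather than a patching technicality. The plan you sketch --- local Cauchy transforms over disks of radius comparable to $1/w(z_{0})$, each contributing $O(w(z_{0}))$, glued by a partition of unity along a Whitney chain --- presupposes regularity of $w$ that the hypotheses do not supply: that $w$ is nonvanishing (it need not be; if every $f\in H$ has $f'(z_{0})=0$ then $k_{z_{0}}^{(1)}=0$ and $w(z_{0})=0$, so the ``natural scale'' is undefined), that $w$ is essentially constant on disks of that radius (a doubling property with no justification for a general positive definite sesquiholomorphic $K$), and that the far-field tails $\int_{\gamma\setminus D_{j}}|z-z_{j}|^{-1}|dz|$, summed over $j$ against the masses $w(z_{j})^{2}r_{j}^{2}$, are dominated by $\int_{\gamma}w\,|dz|$ --- which again requires comparing $|z-z_{j}|^{-1}$ with $w(z)$ along $\gamma$. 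None of these steps is carried out, and I do not see how to derive them from $\ell_{BS}(\gamma)<\infty$ alone. There is also a secondary unresolved point: passing from a solution near $\gamma$ to a $\mu$ holomorphic on all of $\mathbb{D}$ without losing the $L^{1}(\gamma)$ bound requires an Arakelian-type approximation on the relatively closed set $\overline{\gamma([0,1))}\cap\mathbb{D}$, not merely the vanishing of $H^{1}(\mathbb{D},\mathcal{O})$. (It may also be that only conditional convergence of $\int_{\gamma[0,t]}(\mu+\Phi)\,dz$ is attainable, which is weaker than your absolute-integrability requirement and would force a reworking of your length estimate.) In short: what you have is a correct and genuinely useful reduction together with a research plan for the hard analytic step; to complete the proof you would need either to supply that construction or to follow the argument in \cite{Mc}.
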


McCarthy's work was part of an investigation of an interface between operator
theory and differential geometry that goes back (at least) to the work of
Cowen and Douglas \cite{CD}. They showed that one could associate to certain
operators a domain in the plane and a Hermitian holomorphic line bundle in
such a way that the domain and curvature of the line bundle formed a complete
unitary invariant for the operator. That is, two such operators are unitarily
equivalent if and only if the domains in the plane agree and the curvatures on
the line bundles agree. An alternative presentation of their approach yields
RKHSs of holomorphic functions which satisfy certain additional conditions. In
this viewpoint the statement about unitary equivalence becomes the statement
that for certain pairs of RKHSs of holomorphic functions, if their metrics
$\delta$ are the same then each space is a holomorphic rescaling of the other.
The connection between the two viewpoints is that in terms of the kernel
function of the RKHS, the curvature function at issue is
\[
\mathcal{K}(z)=-\frac{\partial^{2}}{\partial z\partial\bar{z}}\log k_{z}(z).
\]
Thus $\mathcal{K}$ or, equivalently $\delta,$ contains a large amount of
operator theoretic information. However that information is not easy to
access; which is why McCarthy's result is so nice and why these relations seem
worth more study.

\subsection{RKHS's With Complete Nevanlinna Pick Kernels}

There is a class of RKHS's which are said to have complete Nevanlinna Pick
kernels or complete NP kernels. The classical Hardy space is the simplest. The
class is easy to define, we will do that in a moment, but the definition is
not very informative. A great deal of work has been done in recent years
studying this class of spaces. The book \cite{AM} by Agler and McCarthy is a
good source of information. In this and the next section we will see that for
this special class of RKHS's the function $\delta$ has additional properties.

We will say that the RKHS $H$ has a complete NP kernel if there are functions
$\left\{  b_{i}\right\}  _{i=1}^{\infty}$ defined on $X$ so that
\[
1-\frac{1}{K(x,y)}=\sum_{i=1}^{\infty}b_{i}(x)\overline{b_{i}(y)};
\]
that is, if the function $1-1/K$ is positive semidefinite.

Of the spaces in our earlier list of examples the Hardy spaces, generalized
Dirichlet spaces, and the Dirichlet space have complete NP kernels. This is
clear for the Hardy space, for the generalized Dirichlet spaces if follows
from using the Taylor series for $1/K$, and for the Dirichlet space there is
some subtlety involved in the verification. On the other hand neither the
generalized Bergman spaces nor the Fock spaces have a complete NP kernel.

Suppose $H$ is a RKHS of functions on $X$ and $x,y\in X,$ $x\neq y.$ Let
$G=G_{x,y}$ be the multiplier of $H$ of norm $1$ which has $G(x)=0$ and
subject to those conditions maximizes $\operatorname{Re}G(y).$

\begin{proposition}
\label{multiplier modulus of continuity}
\[
\operatorname{Re}G_{x,y}\left(  y\right)  \leq\delta(x,y).
\]
If $H$ has a complete NP kernel then%
\[
\operatorname{Re}G_{x,y}\left(  y\right)  =\delta(x,y).
\]
and $G$ is given uniquely by%
\[
G_{x,y}\left(  \cdot\right)  =\delta_{H}(x,y)^{-1}\left(  1-\frac
{k_{y}(x)k_{x}(\cdot)}{k_{x}(x)k_{y}(\cdot)}\right)  .
\]

\begin{proof}
\cite[(5.9) pg 93]{Sa}
\end{proof}
\end{proposition}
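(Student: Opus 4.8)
The plan is to establish the general inequality first using the variational characterization of $\delta$ from Section 4, then treat the complete NP case separately by exhibiting an explicit multiplier and verifying it has norm one.

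For the first inequality, recall from Proposition \ref{multiplier extremal} that any multiplier $m$ with $\|M_m\|_{M(H)}\leq 1$ and $m(x)=0$ satisfies $|m(y)|\leq\sqrt{1-|\langle\hat k_x,\hat k_y\rangle|^2}=\delta(x,y)$. Applying this to $G_{x,y}$ and noting $\operatorname{Re}G_{x,y}(y)\leq|G_{x,y}(y)|$ gives $\operatorname{Re}G_{x,y}(y)\leq\delta(x,y)$ immediately. This part requires no hypothesis on the kernel.

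For the complete NP case I would proceed by first checking that the proposed function $g(\cdot)=\delta_H(x,y)^{-1}\bigl(1-\tfrac{k_y(x)k_x(\cdot)}{k_x(x)k_y(\cdot)}\bigr)$ is actually a multiplier of norm at most one, then that it vanishes at $x$ and achieves $g(y)=\delta(x,y)$, and finally invoke the first inequality to conclude it is the extremal $G_{x,y}$. The norm computation is where the complete NP hypothesis enters: the standard characterization (see \cite{AM}) says that for a complete NP space, a function $m$ is a multiplier of norm at most one precisely when the kernel $(1-m(u)\overline{m(v)})K(u,v)$ is positive semidefinite on $X\times X$. So the key computation is to verify that
\[
\bigl(1-g(u)\overline{g(v)}\bigr)K(u,v)
\]
is positive semidefinite. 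Substituting the formula for $g$ and using $1-1/K(u,v)=\sum_i b_i(u)\overline{b_i(v)}$, one expands and regroups; the point is that $g$ was reverse-engineered from the $b_i$'s (more precisely, from the single extremal combination of them determined by the constraint at $x$), so the resulting kernel should factor as a manifestly positive sum. This is essentially the content of \cite[(5.9) pg 93]{Sa}, and that reference handles precisely this algebra.

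**The main obstacle** is the positive-semidefiniteness verification: one must carefully track how the function $1-1/K$, which is the object with the nice sum-of-squares structure, interacts with the explicit rational expression defining $g$, and confirm that the $x$-dependent normalization $\delta_H(x,y)^{-1}$ and the factors $k_y(x)/k_x(x)$ are exactly what make the cross terms cancel. A secondary point worth noting is uniqueness: once $\operatorname{Re}G_{x,y}(y)=\delta(x,y)$ forces equality in $|G(y)|\leq\delta(x,y)$, one is in the equality case of the underlying Cauchy–Schwarz/Pick estimate, and the complete NP structure pins down the extremal multiplier uniquely — this is again part of what \cite{Sa} records, so I would simply cite it rather than reproduce the argument.
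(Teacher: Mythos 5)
Your proposal is correct and matches the paper's approach: the paper's entire proof is the citation to \cite[(5.9) pg 93]{Sa}, and you likewise defer the essential positive-semidefiniteness computation to that same reference while correctly supplying the elementary first inequality from Proposition \ref{multiplier extremal} and verifying that the explicit formula vanishes at $x$ and equals $\delta(x,y)$ at $y$. You actually provide more detail than the paper does.
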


\begin{remark}
The multipliers of $H$ form a commutative Banach algebra $\mathcal{M}.$ A
classical metric on the spectrum of such an algebra, the Gleason metric, is
given by
\[
\delta_{G}(\alpha,\beta)=\left\{  \sup\operatorname{Re}\alpha(M):M\in
\mathcal{M},\left\Vert M\right\Vert =1,\beta(M)=0\right\}  ,
\]
\cite{G}, \cite{L}, \cite{BW}. The points of $X$ give rise to elements of the
spectrum via $\hat{x}(G)=G(x).$ Thus if $H$ has a complete NP kernel then
$\delta$ agrees with the Gleason metric:
\[
\forall x,y\in X,\text{ }\delta_{G}(\hat{x},\hat{y})=\delta(x,y).
\]

\end{remark}

\subsubsection{Generalized Blaschke Products}

If $H=\mathcal{H}_{1},$ the Hardy space, when we compute $G$ we get%

\begin{align*}
G_{x,y}\left(  z\right)   &  =\left\vert \frac{1-\bar{x}y}{y-x}\right\vert
\left(  1-\frac{(1-\left\vert x\right\vert ^{2})\left(  1-\bar{y}z\right)
}{\left(  1-\bar{y}x\right)  \left(  1-\bar{x}z\right)  }\right) \\
&  =\left\vert \frac{1-\bar{x}y}{y-x}\right\vert \frac{\bar{y}-\bar{x}}%
{1-\bar{y}x}\frac{z-x}{1-\bar{x}z}\\
&  =e^{i\theta}\frac{z-x}{1-\bar{x}z}.
\end{align*}
Thus $G_{x,y}$ is a single Blaschke factor which vanishes at $x$ and is
normalized to be positive at the base point $y.$

Suppose now we have a RKHS $H$ which has a complete NP kernel and let us
suppose for convenience that it is a space of holomorphic functions on
$\mathbb{D}$. Suppose we are given a set $S=\left\{  x_{i}\right\}
_{i=1}^{\infty}\subset\mathbb{D}$ and we want to find a function in $H$ and/or
$M(H)$ whose zero set is exactly $S.$ We could use the functions $G$ just
described and imitate the construction of a general Blaschke product from the
individual Blaschke factors. That is, pick $x_{0}\in\mathbb{D}\smallsetminus
S$ and consider the product%
\begin{equation}
B(\zeta)=B_{S,x_{0}}\left(  \zeta\right)  =\prod_{i=1}^{\infty}G_{x_{i},x_{0}%
}\left(  \zeta\right)  . \label{product}%
\end{equation}
If the product converges then $B\left(  \zeta\right)  $ will be a multiplier
of norm at most one and its zero set will be exactly $S.$

The multiplier norm dominates the supremum so the factors in (\ref{product})
have modulus less than one. Hence the product either converges to a
holomorphic function with zeros only at the points of $S$ or the product
diverges to the function which is identically zero. The same applies to the
function $B^{2}(\zeta).$ We test the convergence of that product by evaluation
at $x_{0}$ and, recalling that $G_{z,y}(y)=\delta_{H}\left(  z,y\right)  ,$ we see%

\begin{align}
B_{S,x_{0}}^{2}\left(  x_{0}\right)   &  =\prod_{i=1}^{\infty}\delta^{2}%
(x_{i},z_{0}),\label{prod1}\\
&  =\prod_{i=1}^{\infty}\left(  1-\frac{\left\vert k_{x_{0}}(x_{i})\right\vert
^{2}}{\left\Vert k_{x_{0}}\right\Vert ^{2}\left\Vert k_{x_{i}}\right\Vert
^{2}}\right)  . \label{prod3}%
\end{align}

Recalling the conditions for absolute convergence of infinite products, we
have established the following:

\begin{proposition}
The generalized Blaschke product $B_{S,x_{0}}$ converges to an element of
$M(H)$ of norm at most one and with zero set exactly $S$ if and only if the
following two equivalent conditions hold
\begin{align}
\prod_{i=1}^{\infty}\delta^{2}(x_{i},x_{0})  &  >0\nonumber\\
\sum_{i=1}^{\infty}\frac{\left\vert k_{x_{0}}(x_{i})\right\vert ^{2}%
}{\left\Vert k_{x_{0}}\right\Vert ^{2}\left\Vert k_{x_{i}}\right\Vert ^{2}}
&  <\infty. \label{sum}%
\end{align}
If the conditions do not hold then $B_{S,x_{0}}$ is the zero function.
\end{proposition}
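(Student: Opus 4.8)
The plan is to deduce everything from the elementary theory of infinite products together with the two displayed identities \eqref{prod1} and \eqref{prod3} that have already been assembled. The core observation, already noted in the excerpt, is that each factor $G_{x_i,x_0}$ is a multiplier of norm at most one (being the extremal multiplier from Proposition \ref{multiplier modulus of continuity}), so the partial products $B_N=\prod_{i=1}^N G_{x_i,x_0}$ form a sequence of multipliers whose norms are bounded by $1$. Since the multiplier norm dominates the supremum norm on $\mathbb{D}$, each factor satisfies $|G_{x_i,x_0}(\zeta)|\le 1$, and the partial products are uniformly bounded holomorphic functions on $\mathbb{D}$. By Montel's theorem a subsequence converges locally uniformly; the standard dichotomy for infinite products of holomorphic functions bounded by $1$ then says that either $B_N\to B$ locally uniformly to a holomorphic $B\not\equiv 0$ whose zero set is exactly $S$ (counting multiplicity, with the $x_i$ listed according to multiplicity), or $B_N\to 0$ locally uniformly. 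I would state this dichotomy explicitly as the backbone of the argument.

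Next I would pin down which alternative occurs by testing at the base point $x_0$. Because $G_{x_i,x_0}(x_0)=\delta_H(x_i,x_0)\in(0,1)$ (strictly positive since $x_i\neq x_0$ and $H$ separates $x_i$ from $x_0$, strictly less than $1$ by Cauchy--Schwarz unless $\hat k_{x_i}$ and $\hat k_{x_0}$ are parallel), the value of the partial product at $x_0$ is $B_N(x_0)=\prod_{i=1}^N\delta_H(x_i,x_0)$, a decreasing sequence of positive numbers. Thus $B_N(x_0)^2=\prod_{i=1}^N\delta^2(x_i,x_0)$ converges, and its limit is positive if and only if $\sum_i\bigl(1-\delta^2(x_i,x_0)\bigr)<\infty$, which by \eqref{prod3} is exactly condition \eqref{sum}. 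If that limit is positive then $B_N(x_0)\not\to 0$, so the zero alternative is impossible and $B=B_{S,x_0}$ is the desired nonzero multiplier; conversely if the limit is $0$ then $B(x_0)=0$, which (since $x_0\notin S$) forces $B\equiv 0$ under the dichotomy. So the two conditions in the proposition are each equivalent to the non-vanishing of $B$, and they are equivalent to each other by the elementary infinite-product criterion applied to $\prod(1-a_i)$ with $a_i=1-\delta^2(x_i,x_0)=|k_{x_0}(x_i)|^2/(\|k_{x_0}\|^2\|k_{x_i}\|^2)\in[0,1)$.

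The one point needing a little care — and the place I expect to spend the most effort — is the claim that when $B\not\equiv 0$ its zero set is \emph{exactly} $S$, with the right multiplicities, rather than merely containing $S$. One inclusion is immediate: $B(x_i)=0$ for each $i$ since the factor $G_{x_i,x_0}$ vanishes there. For the reverse, one uses that locally uniform convergence of $B_N$ to a nonzero limit means that on any compact subset $E$ the zeros of $B$ in $E$ are, by Hurwitz's theorem, limits of zeros of $B_N$; since the zeros of $B_N$ in $E$ are among $\{x_1,\dots,x_N\}\cap E$ and $S$ has no accumulation point in $\mathbb{D}$ (this follows from \eqref{sum}, which forces $\delta(x_i,x_0)\to 1$, hence $x_i$ to leave every compact subset of $\mathbb{D}$), the zero set of $B$ is contained in $S$ and the multiplicity at each $x_i$ is exactly the number of times $x_i$ is repeated in the list. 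Here I would remark that if $S$ is given as a set with multiplicities one lists the points accordingly; if $S$ has a finite accumulation point in $\mathbb{D}$ then \eqref{sum} fails and $B\equiv 0$, consistent with the statement. Finally, the norm bound $\|B\|_{M(H)}\le 1$ passes to the limit because the unit ball of $M(H)$ is closed under the relevant (bounded pointwise, equivalently WOT) limits: for fixed $f\in H$, $B_N f\to Bf$ pointwise on $X$ with $\|B_N f\|_H\le\|f\|_H$, and a standard weak-compactness argument identifies the limit as $Bf$ with $\|Bf\|_H\le\|f\|_H$. Assembling these pieces gives the proposition.
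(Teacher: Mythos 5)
Your argument is correct and follows essentially the same route as the paper: observe that each factor $G_{x_i,x_0}$ has modulus at most one, invoke the dichotomy for such infinite products, and test convergence at the base point via $G_{x_i,x_0}(x_0)=\delta(x_i,x_0)$ together with the standard criterion $\prod(1-a_i)>0\iff\sum a_i<\infty$, which is exactly how the paper passes from \eqref{prod1}--\eqref{prod3} to the proposition. You simply supply more of the details the paper leaves implicit (Hurwitz for the exactness of the zero set, the normalization at $x_0$ to pin down the limit, and the weak-compactness argument for $\|B\|_{M(H)}\le 1$), which is a refinement rather than a different approach.
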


\begin{description}
\item[Remark] If $1\in H$ then $M(H)\subset H$ and the proposition gives
conditions that insure that there is a function in $H$ with zero set exactly
$S.$
\end{description}

\begin{corollary}
A sufficient condition for the set $S$ to be a zero set for the Hardy space,
$H_{1},$ or of $M(H_{1})$ which is known to be the space of bounded analytic
functions in the disk, is that
\begin{equation}
\sum\left(  1-\left\vert x_{i}\right\vert ^{2}\right)  <\infty.
\label{Blaschke}%
\end{equation}
A sufficient condition for the set $S$ to be a zero set for the Dirichlet
space $H_{0}$ or of $M(H_{0})$ is that%
\begin{equation}
\sum\log\left(  \frac{1}{1-\left\vert x_{i}\right\vert ^{2}}\right)  <\infty.
\label{Shapiro and Shields}%
\end{equation}

\begin{proof}
These are just the conclusions of the previous proposition applied to the
Hardy space and the Dirichlet space with the choice of the origin for the basepoint.
\end{proof}
\end{corollary}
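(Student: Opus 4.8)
The plan is to invoke the preceding Proposition on generalized Blaschke products twice, once with $H=\mathcal{H}_1$ (the Hardy space) and once with $H=\mathcal{H}_0$ (the Dirichlet space), in each case choosing the base point $x_0$ to be the origin. Both spaces have complete NP kernels, as recorded earlier, so the Proposition applies; and since $1\in\mathcal{H}_1$ and $1\in\mathcal{H}_0$, the Remark following the Proposition gives $M(H)\subset H$, so it suffices to produce the generalized Blaschke product $B_{S,0}$ as a norm-one multiplier whose zero set is exactly $S$. We may assume $0\notin S$: each of the hypotheses $(\ref{Blaschke})$ and $(\ref{Shapiro and Shields})$ forces its summands to tend to $0$ and hence forces $|x_i|\to 1$, so $0$ occurs in $S$ at most once, and if it does we run the product over $S\setminus\{0\}$ and restore the point by multiplying by an extremal multiplier $G_{0,y}$ from Proposition \ref{multiplier modulus of continuity} (whose only zero is at $0$), which changes neither the stated conditions nor the conclusion.

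The substance is the evaluation of the series in $(\ref{sum})$ at $x_0=0$. In both spaces $K(w,0)\equiv 1$: for $\mathcal{H}_1$ this is immediate from $K_1(w,z)=(1-\bar z w)^{-1}$, and for $\mathcal{H}_0$ it follows by letting $z\to 0$ in $K_0(w,z)=(\bar z w)^{-1}\log\bigl((1-\bar z w)^{-1}\bigr)$. Hence $k_0\equiv 1$, $\|k_0\|^2=k_0(0)=1$, and $k_0(x_i)=1$ for every $i$, so the series in $(\ref{sum})$ collapses to $\sum_i\|k_{x_i}\|^{-2}=\sum_i K(x_i,x_i)^{-1}$. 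For the Hardy space $K_1(x_i,x_i)=(1-|x_i|^2)^{-1}$, and this series is exactly $\sum_i(1-|x_i|^2)$, i.e.\ hypothesis $(\ref{Blaschke})$; the Proposition then produces $B_{S,0}$ and the Hardy case is done. For the Dirichlet space $K_0(x_i,x_i)=|x_i|^{-2}\log\bigl((1-|x_i|^2)^{-1}\bigr)$, so the series becomes $\sum_i|x_i|^2\bigl(\log\bigl((1-|x_i|^2)^{-1}\bigr)\bigr)^{-1}$.

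It remains only to match this last series with the series in $(\ref{Shapiro and Shields})$. Since $|x_i|\to 1$, all but finitely many terms have $\tfrac14\le|x_i|^2\le 1$, so each such term is comparable to $\bigl(\log\bigl((1-|x_i|^2)^{-1}\bigr)\bigr)^{-1}$; the finitely many remaining terms are each at most $1$, because $\log\bigl((1-t)^{-1}\bigr)\ge t$ on $[0,1)$, and hence are irrelevant to convergence. Thus the series in $(\ref{sum})$ converges under hypothesis $(\ref{Shapiro and Shields})$, the Proposition applies, and the Dirichlet case is done. I expect no genuine obstacle here: once the Proposition is in hand the argument is a substitution into $(\ref{sum})$ together with elementary estimates. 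The only two steps that call for a little care are checking that a finite modification of $S$ (removing and restoring $0$) is harmless, and, in the Dirichlet case, the passage from the exact series in $(\ref{sum})$ to the clean condition $(\ref{Shapiro and Shields})$.
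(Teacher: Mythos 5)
Your proposal is the paper's own proof with the details written out: the paper's entire argument is ``apply the previous proposition with basepoint the origin,'' and your evaluation of the series \eqref{sum} --- $k_{0}\equiv1$ in both spaces, so the series collapses to $\sum\Vert k_{x_{i}}\Vert^{-2}$, which is $\sum(1-\vert x_{i}\vert^{2})$ for the Hardy space and $\sum\vert x_{i}\vert^{2}\bigl(\log\frac{1}{1-\vert x_{i}\vert^{2}}\bigr)^{-1}$ for the Dirichlet space --- is exactly the computation the authors are invoking. Your handling of the case $0\in S$ is a sensible extra precaution that the paper passes over.

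One caution at the final matching step for the Dirichlet case. As printed, \eqref{Shapiro and Shields} reads $\sum\log\bigl(\frac{1}{1-\vert x_{i}\vert^{2}}\bigr)<\infty$; that hypothesis forces its summands to tend to $0$, hence forces $\vert x_{i}\vert\rightarrow0$ (not $\vert x_{i}\vert\rightarrow1$), and it does not imply convergence of $\sum\vert x_{i}\vert^{2}\bigl(\log\frac{1}{1-\vert x_{i}\vert^{2}}\bigr)^{-1}$: with $\vert x_{i}\vert^{2}=2^{-i}$ the hypothesis holds while the terms of the latter series tend to $1$. You have silently read the condition as $\sum\bigl(\log\frac{1}{1-\vert x_{i}\vert^{2}}\bigr)^{-1}<\infty$, which is the classical Shapiro--Shields condition, is evidently what the authors intend (it is what the proposition actually yields and what the cited Shapiro--Shields paper proves), and is the only reading under which your assertion that the hypothesis forces $\vert x_{i}\vert\rightarrow1$ is true. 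With that corrected reading your comparison estimates are fine; you should flag the missing reciprocal in the printed display rather than relying on it tacitly.
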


\begin{remark}
Condition (\ref{Blaschke}) is the Blaschke condition which is well known to be
necessary and sufficient for $S$ to be the zero set of a function in
$\mathcal{H}_{1}$ or $H^{\infty}.$

The condition for $S$ to be a zero set for the Dirichlet space was first given
by Shapiro and Shields \cite{SS} and the argument we gave descends from
theirs. It is known that this condition is necessary and sufficient if
$S\subset\left(  0,1\right)  $ but is not necessary in general.
\end{remark}

\begin{remark}
In the next subsection we note that any $H$ with a complete NP kernel is
related to a special space of functions in a complex ball. Using that
relationship one checks easily that the convergence criteria in the
Proposition is a property of the set $S$ and is independent of the choice of
$x_{0}.$
\end{remark}

\subsubsection{The Drury Arveson Hardy Space and Universal Realization}

For $n=1,2,...$ we let $\mathbb{B}^{n}$ denote the open unit ball in
$\mathbb{C}^{n}.$ We allow $n=\infty$ and interpret $\mathbb{B}^{\infty}$ to
be the open unit ball of the one sided sequence space $\ell^{2}\left(
\mathbb{Z}_{+}\right)  .$ For each $n$ we define the $n-$dimensional Drury
Arveson Hardy space, $D_{n}$ to be the RKHS on $\mathbb{B}^{n}$ with kernel
function%
\[
K_{n}(x,y)=\frac{1}{1-\left\langle x,y\right\rangle }.
\]
Thus when $n=1$ we have the classical Hardy space.

For each $n$ the kernel function $K$ is a complete NP kernel. The spaces
$D_{n}$ are universal in the sense that any other RKHS with a complete NP
kernel can be realized as a subspace of some $D_{n}.$ If $H$ is a RKHS on $X$
and $H$ has a complete NP kernel then there is for some $n,$ possibly
infinite, a mapping $\gamma:X\rightarrow$ $\mathbb{B}^{n}$ and a nonvanishing
function $b$ defined on $X$ so that $K_{H},$ the kernel function for the space
$H,$ is given by
\begin{equation}
K_{H}(x,y)=b(x)\overline{b(y)}K_{n}(\gamma(x),\gamma(y))=\frac{b(x)\overline
{b(y)}}{1-\left\langle \gamma(x),\gamma(y)\right\rangle }. \label{universal}%
\end{equation}
There is no claim of smoothness for $\gamma.$ All this is presented in
\cite{AM}.

The map $\gamma$ can be used to pull back the pseudohyperbolic metric from
$\mathbb{B}^{n}$ to produce a metric on $X.$ First we recall the basic facts
about the pseudohyperbolic metric $\mathbb{B}^{n}.$ Details about the
construction of the metric and its properties can be found in \cite{DW}; the
discussion there is for finite $n$ but the rudimentary pieces of theory we
need for infinite $n$ follow easily from the same considerations.

The pseudohyperbolic metric $\rho$ on the unit disk, $\mathbb{B}^{1},$ can be
described as follows. The disk possesses a transitive group of biholomorphic
automorphisms, $G=\left\{  \phi_{\alpha}\right\}  _{\alpha\in A}. $ Given a
pair of points $z,w\in\mathbb{B}^{1}$ select a $\phi_{\alpha}\in G$ so that
$\phi_{a}(z)=0.$ The quantity $\left\vert \phi_{\alpha}(w)\right\vert $ can be
shown to be independent of the choice of $\phi_{\alpha}$ and we define
$\rho(z,w)=\left\vert \phi_{\alpha}(w)\right\vert .$

In this form the construction generalizes to $\mathbb{B}^{n}.$ The $n-$ball
has a transitive group of biholomorphic automorphisms, $\mathcal{G}=\left\{
\theta_{\beta}\right\}  _{\beta\in B}.$ Given a pair of points $z,w\in
\mathbb{B}^{n}$ select a $\theta_{\beta}\in\mathcal{G}$ so that $\theta
_{\beta}(z)=0.$ The quantity $\left\vert \theta_{\beta}(w)\right\vert $ can be
shown to be independent of the choice of $\theta_{\beta}$ and we define
$\rho_{n}(z,w)=\left\vert \theta_{\beta}(w)\right\vert .$ The only difference,
and that is hidden by our notation, is that now $|$ $\cdot$
$\vert$
denotes the Euclidean length of a vector rather than the modulus of a scalar.
The function $\rho_{n}$ can be shown to be a metric and to have the expected
properties including invariance under $\mathcal{G}$ and having an induced
inner metric $\rho_{n}^{\ast}$ that, up to a scalar factor, agrees with the
distance induced by the Poincare-Bergman metric tensor. Particularly important
for our purposes is that there is an analog of (\ref{magic}) \cite[pg 67]{DW}.
For $z,w\in\mathbb{B}^{n}$
\begin{equation}
1-\frac{(1-\left\vert z\right\vert ^{2})(1-|w|^{2})}{\left\vert 1-\left\langle
z,w\right\rangle \right\vert ^{2}}=\rho_{n}^{2}\left(  z,w\right)  .
\label{magic2}%
\end{equation}
An immediate consequence of the definition of $\delta,$ the relationship
(\ref{universal}), and the identity (\ref{magic2}) is that the metric $\delta$
on $X$ is the pull back of $\rho_{n}$ by $\gamma.$ Put differently $\gamma$ is
an isometric map of $\left(  X,\delta_{H}\right)  $ into $\left(
\mathbb{B}^{n},\rho_{\mathbb{B}^{n}}\right)  .$ In particular the $\delta$
metric on the Drury-Arveson space is the pseudohyperbolic metric on
$\mathbb{B}^{n}:$ $\delta_{D_{n}}=\rho_{\mathbb{B}^{n}}.$

\section{Invariant Subspaces and Their Complements}

Suppose we are given RKHSs on a set $X$ and linear maps between them. We would
like to use the $\delta$s on$\ X$ to study the relation between the function
spaces and to study the linear maps. The goal is broad and vague. Here we just
report on a few very special cases.

We will consider a RKHS $H$ of functions on a set $X,$ a closed multiplier
invariant subspace $J$ of $H;$ that is we require that if $j\in J$ and $m$ is
a multiplier of $H$ then $mj\in J.$ We will also consider the $J^{\perp},$ the
orthogonal complement of $J.$ The spaces $J$ and $J^{\perp}$ are RKHSs on $X$
and we will be interested the the relationship between the metrics $\delta
_{H},\delta_{J},$ and $\delta_{J^{\perp}}.$ Because we are working with a
subspace and its orthogonal complement there is a simple relation between the
kernel functions. Let $\left\{  k_{x}\right\}  $ be the kernel function $H,$
$\left\{  j_{x}\right\}  $ those for $J$ and $\left\{  j_{x}^{\perp}\right\}
$ those for $J^{\perp}.$ We then have, $\forall x\in X$%
\begin{equation}
k_{x}=j_{x}+j_{x}^{\perp}. \label{add}%
\end{equation}
In terms of $P,$ the orthogonal projection of $H$ onto $J,$ and $P^{\perp
}=I-P,$ we have $j_{x}=Pk_{x},$ $j_{x}^{\perp}=P^{\perp}k_{x}.$

\subsection{The Hardy Space}

We begin with $\mathcal{H}_{1},$ the Hardy space. In that case there is a good
description of the invariant subspaces, the computations go smoothly, and the
resulting formulas are simple. If $J$ is an invariant subspace then there is
an inner function $\Theta_{J}$ so that $J=\Theta_{J}\mathcal{H}_{1}. $ The
kernel functions are, for $z,w\in\mathbb{D}$, $\Theta_{J}(z)\neq0 $ are given
by
\[
j_{z}(w)=\frac{\overline{\Theta_{J}(z)}\Theta_{J}(w)}{1-\bar{z}w}.
\]
Thus if $\Theta_{J}(z)\Theta_{J}(z^{\prime})\neq0$ then
\[
\delta_{J}\left(  z,z^{\prime}\right)  =\delta_{\mathcal{H}_{1}}\left(
z,z^{\prime}\right)  .
\]
In the other cases, by our convention, $\delta_{J}$ is undefined.

Taking into account the formula for $j_{z}$ and (\ref{add}) we find%
\[
j_{z}^{\perp}(w)=\frac{1-\overline{\Theta_{J}(z)}\Theta_{J}(w)}{1-\bar{z}w}%
\]
and hence
\[
1-\delta_{J^{\perp}}^{2}\left(  z,w\right)  =\frac{\left(  1-\left\vert
\Theta_{J}\left(  z\right)  \right\vert ^{2}\right)  \left(  1-\left\vert
\Theta_{J}\left(  w\right)  \right\vert ^{2}\right)  }{\left\vert
1-\overline{\Theta_{J}(z)}\Theta_{J}(w)\right\vert ^{2}}\frac{(1-\left\vert
z\right\vert ^{2})(1-|w|^{2})}{\left\vert 1-\bar{z}w\right\vert ^{2}}.
\]
We can now use (\ref{magic}) on both fractions and continue with%
\[
1-\delta_{J^{\perp}}^{2}\left(  z,w\right)  =\frac{1-\rho^{2}(z,w)}{1-\rho
^{2}(\Theta_{J}(z),\Theta_{J}(w))}.
\]
Doing the algebra we obtain%
\[
\delta_{J^{\perp}}\left(  z,w\right)  =\sqrt{\frac{\rho^{2}(z,w)-\rho
^{2}(\Theta_{J}(z),\Theta_{J}(w))}{1-\rho^{2}(\Theta_{J}(z),\Theta_{J}(w))}}.
\]
In particular
\[
\delta_{J^{\perp}}\leq\rho=\delta_{\mathcal{H}_{1}}%
\]
with equality holding if and only if $\Theta_{J}(z)=\Theta_{J}(w).$

\subsection{Triples of Points and the Shape Invariant}

When we move away from the Hardy space computation becomes complicated.
Suppose $H$ is a RKHS on $X$ with kernel functions $\left\{  k_{x}\right\}  $
and associated distance function $\delta.$ Select distinct $x,y,z\in X.$ We
consider the invariant subspace $J$ of functions which vanish at $x,$ the
orthogonal complement of the span of $k_{x}.$ We will denote the kernel
functions for $J$ by $\left\{  j_{z}\right\}  .$ We want to compute
$\delta_{J}(y,z)$ in terms of other data.

For any $\zeta\in X,$ $j_{z}$ equals $k_{\zeta}$ minus the projection onto
$J^{\perp}$ of $k_{\zeta}.$ The space $J^{\perp}$ is one dimensional and
spanned by $k_{x}$ hence we can compute explicitly%
\[
j_{y}=k_{y}-\frac{k_{y}(x)}{\left\Vert k_{x}\right\Vert ^{2}}k_{x},
\]
and there is a similar formula for $j_{z}.$ We will need $\left\vert
\,\left\langle j_{y},j_{z}\right\rangle \right\vert ^{2}.$
\begin{align}
\left\vert \,\left\langle j_{y},j_{z}\right\rangle \right\vert ^{2}  &
=\left\vert k_{y}(z)-\frac{k_{y}(x)}{\left\Vert k_{x}\right\Vert ^{2}}%
k_{x}(z)\right\vert ^{2}\nonumber\\
&  =\left\vert k_{y}(z)\right\vert ^{2}+\frac{\left\vert k_{y}(x)k_{x}%
(z)\right\vert ^{2}}{\left\Vert k_{x}\right\Vert ^{4}}-2\operatorname{Re}%
k_{y}(z)\overline{\frac{k_{y}(x)}{\left\Vert k_{x}\right\Vert ^{2}}k_{x}(z)}.
\label{sofar}%
\end{align}
Recall that for any two distinct elements $\alpha,\beta$ of the set $\left\{
x,y,z\right\}  $ we have
\begin{equation}
\left\vert k_{\alpha}(\beta)\right\vert ^{2}=\left\vert k_{\beta}%
(\alpha)\right\vert ^{2}=\left\Vert k_{\alpha}\right\Vert ^{2}\left\Vert
k_{\beta}\right\Vert ^{2}\left(  1-\delta^{2}(\alpha,\beta)\right)  .
\label{repl}%
\end{equation}
We use this in (\ref{sofar}) to replace the quantities such as the ones on the
left in (\ref{repl}) with the one on the right. Also, we will write
$\delta_{\alpha\beta}^{2}$ rather than $\delta^{2}(\alpha,\beta).$ For the
same $\alpha,\beta$ we define $\theta_{\alpha\beta}$ and $\phi_{\alpha\beta}$
with $0$ $\leq\theta_{\alpha\beta}\leq\pi,$ and $\phi_{\alpha\beta}$ with $0$
$\leq\phi_{\alpha\beta}$\ $<\pi$ by%
\[
k_{\alpha}\left(  \beta\right)  =\left\langle k_{\alpha},k_{\beta
}\right\rangle =\left\Vert k_{\alpha}\right\Vert \left\Vert k_{\beta
}\right\Vert \left(  \cos\theta_{a\beta}\right)  e^{i\phi_{\alpha\beta}}%
\]
and we set
\[
\Upsilon=\cos\theta_{xy}\cos\theta_{yz}\cos\theta_{zx}\cos\left(  \phi
_{xy}+\phi_{yz}+\phi_{zx}\right)  .
\]
We now continue from (\ref{sofar}) with%
\begin{align*}
\left\vert \,\left\langle j_{y},j_{z}\right\rangle \right\vert ^{2}  &
=\left\Vert k_{z}\right\Vert ^{2}\left\Vert k_{y}\right\Vert ^{2}\left(
1-\delta_{zy}^{2}\right)  +\left\Vert k_{y}\right\Vert ^{2}\left\Vert
k_{z}\right\Vert ^{2}\left(  1-\delta_{xy}^{2}\right)  \left(  1-\delta
_{xz}^{2}\right)  -2\left\Vert k_{y}\right\Vert ^{2}\left\Vert k_{z}%
\right\Vert ^{2}\Upsilon\\
&  =\left\Vert k_{z}\right\Vert ^{2}\left\Vert k_{y}\right\Vert ^{2}\left\{
\left(  1-\delta_{zy}^{2}\right)  +\left(  1-\delta_{xy}^{2}\right)  \left(
1-\delta_{xz}^{2}\right)  -2\Upsilon\right\}  .
\end{align*}

Similar calculations give%
\begin{align*}
\left\Vert j_{y}\right\Vert ^{2}  &  =\left\Vert k_{y}\right\Vert ^{2}%
-\frac{\left\vert k_{x}(y)\right\vert ^{2}}{\left\Vert k_{x}\right\Vert ^{2}%
}=\left\Vert k_{y}\right\Vert ^{2}-\left\Vert k_{y}\right\Vert ^{2}\left(
1-\delta_{xy}^{2}\right)  =\left\Vert k_{y}\right\Vert ^{2}\delta_{xy}^{2}\\
\left\Vert j_{z}\right\Vert ^{2}  &  =\left\Vert k_{z}\right\Vert ^{2}%
\delta_{xz}^{2}%
\end{align*}
Hence%
\begin{align*}
\delta_{J}^{2}(y,z)  &  =1-\frac{\left\vert \,\left\langle j_{y}%
,j_{z}\right\rangle \right\vert ^{2}}{\left\Vert j_{y}\right\Vert
^{2}\left\Vert j_{z}\right\Vert ^{2}}\\
&  =1-\frac{\left\Vert k_{z}\right\Vert ^{2}\left\Vert k_{y}\right\Vert
^{2}\left\{  \left(  1-\delta_{zy}^{2}\right)  +\left(  1-\delta_{xy}%
^{2}\right)  \left(  1-\delta_{xz}^{2}\right)  -2\Upsilon\right\}
}{\left\Vert k_{y}\right\Vert ^{2}\delta_{xy}^{2}\left\Vert k_{z}\right\Vert
^{2}\delta_{xz}^{2}}\\
&  =\frac{\delta_{xy}^{2}\delta_{xz}^{2}-\left(  1-\delta_{zy}^{2}\right)
-\left(  1-\delta_{xy}^{2}\right)  \left(  1-\delta_{xz}^{2}\right)
+2\Upsilon}{\delta_{xy}^{2}\delta_{xz}^{2}}\\
&  =\frac{\delta_{xy}^{2}+\delta_{xz}^{2}+\delta_{zy}^{2}-2+2\Upsilon}%
{\delta_{xy}^{2}\delta_{xz}^{2}}.
\end{align*}
Thus we can write the very symmetric formula%
\begin{equation}
\delta(y,z)\delta_{J}(y,z)=\frac{\sqrt{\delta_{xy}^{2}+\delta_{xz}^{2}%
+\delta_{zy}^{2}-2+2\Upsilon}}{\delta_{xy}\delta_{xz}\delta_{yz}}.
\label{shape}%
\end{equation}

One reason for carrying this computation through is to note the appearance of
$\Upsilon.$ This quantity, which is determined by the ordered triple $\left\{
k_{x},k_{y},k_{z}\right\}  $ and which is invariant under cyclic permutation
of the three, is a classical invariant of projective and hyperbolic geometry.
It is called the shape invariant. The triple determines an ordered set of
three points in $P(H)$ the projective space over $H;$ $p_{x}=\left[
k_{x}\right]  ,$ $p_{y},$ and $p_{z}.$ Modulo some minor technicalities which
we omit, one can regard the three as vertices of an oriented triangle
$T_{xyz}.$ The edges of the triangle are the geodesics connecting the vertices
and the surface is formed by the collection of all geodesics connecting points
on the edges. In Euclidian space two triangles are congruent, one can be moved
to the other by an action of the natural isometry group, if and only if the
set of side lengths agree. That is not true in projective space. The correct
statement there is that two triangles are congruent if and only if the three
side lengths and the shape invariants match \cite{BT}, \cite{BR}. Using the
natural geometric structure of complex projective space one can also define
and compute the area of $T_{xyz}.$ It turns out, roughly, that once the side
lengths are fixed then $\Upsilon$ determines the area of $T_{xyz}$ and vice
versa \cite{HM}. Further discussion of the shape invariant is in \cite{Go} and
\cite{BS}.

The reason for mentioning all this is that $\Upsilon$ was the one new term
that appeared in (\ref{shape}) and it is slightly complicated. The fact that
this quantity has a life of its own in geometry suggests that perhaps the
computations we are doing are somewhat natural and may lead somewhere interesting.

In B\o e's work on interpolating sequences in RKHS with complete NP kernel
\cite{B} (see also \cite{Sa}) he makes computations similar in spirit and
detail to the ones above. Informally, he is extracting analytic information
from a geometric hypothesis. It is plausible that knowing how such techniques
could be extended from three points to $n$ points would allow substantial
extension of B\o e's results.

\subsection{Monotonicity Properties}

We saw that when $J$ was an invariant subspace of the Hardy space
$\mathcal{H}_{1}$ then for all $x,y$ in the disk%
\[
\delta_{J}(x,y)=\delta_{\mathcal{H}_{1}}(x,y)\geq\delta_{J^{\perp}}(x,y).
\]
It is not clear what, if any, general pattern or patterns this is an instance
of. Here we give some observations and computations related to that question.

\subsubsection{Maximal Multipliers}

Fix $H$ and $X.$ Recall Proposition \ref{multiplier modulus of continuity};
given $x,y\in X,$ $x\neq y$ we denoted by $G_{x,y}$ be the multiplier of $H$
of norm $1$ which has $G(x)=0$ and subject to those conditions maximizes
$\operatorname{Re}G(y).$ The Proposition stated that $\operatorname{Re}%
G_{x,y}\left(  y\right)  \leq\delta_{H}(x,y)$ and that equality sometimes
held. If equality does hold we will say that $x,y$ have an maximal multiplier
and we will call $G_{x,y}$ the maximal multiplier.

\begin{proposition}
If $x,y\in X$ have a maximal multiplier and $J$ is any closed multiplier
invariant subspace of $H$ then $\delta_{J}(x,y)\geq\delta_{H}(x,y).$

\begin{proof}
Because $J$ is a closed multiplier invariant subspace of $H,$ the maximal
multiplier $G_{x,y}$ is also a multiplier of $J$ and has a norm, as a
multiplier on $J,$ at most one. Thus $G_{x,y}$ is a competitor in the extremal
problem associated with applying Proposition
\ref{multiplier modulus of continuity} to $J.$ Hence, by that proposition we
have $\operatorname{Re}G_{x,y}\left(  y\right)  \leq\delta_{J}(x,y).$ On the
other hand our hypothesis is that $\operatorname{Re}G_{x,y}\left(  y\right)
=\delta_{H}(x,y).$
\end{proof}
\end{proposition}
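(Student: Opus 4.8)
The plan is to feed the maximal multiplier $G_{x,y}$ of $H$ directly into the extremal problem that defines $\delta_J(x,y)$ through Proposition \ref{multiplier modulus of continuity}, and then to finish by the elementary fact that shrinking an admissible class can only lower a supremum. By hypothesis $G_{x,y}$ is a multiplier of $H$ with $\|G_{x,y}\|_{M(H)}\le 1$, with $G_{x,y}(x)=0$, and with $\operatorname{Re}G_{x,y}(y)=\delta_H(x,y)$.

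The first step is to check that $G_{x,y}$ is also a multiplier of $J$ of multiplier norm at most one. Since $J$ is a closed multiplier invariant subspace of $H$, multiplication by the multiplier $G_{x,y}$ carries $J$ into $J$, so $M_{G_{x,y}}$ restricts to a well-defined operator on $J$; and because $J$ carries the norm inherited from $H$, one has $\|M_{G_{x,y}}|_J\|=\sup\{\|M_{G_{x,y}}j\|:j\in J,\ \|j\|=1\}\le\|M_{G_{x,y}}\|_{M(H)}\le 1$. Thus $G_{x,y}$ is an admissible competitor in the extremal problem for $J$: it is a unit-norm-bounded multiplier of $J$ that vanishes at $x$. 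Applying Proposition \ref{multiplier modulus of continuity} to the space $J$ then gives $\operatorname{Re}G_{x,y}(y)\le\delta_J(x,y)$, and combining this with $\operatorname{Re}G_{x,y}(y)=\delta_H(x,y)$ yields $\delta_H(x,y)\le\delta_J(x,y)$, which is the assertion.

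Since the argument is no more than ``restricting the admissible set can only decrease the sup,'' there is no genuine obstacle; the two points that want a moment's care are (i) the claim that the multiplier norm does not increase when one passes from $H$ to the invariant subspace $J$ — this is exactly where multiplier invariance of $J$ is used, and it would fail for an arbitrary closed subspace — and (ii) the standing conventions, namely that $x\ne y$ is required for Proposition \ref{multiplier modulus of continuity} and that $\delta_J(x,y)$ is only being asserted at pairs where the kernel functions of $J$ at $x$ and $y$ are nonzero. Both are routine. It is worth remarking in passing that the hypothesis is not vacuous: whenever $H$ has a complete NP kernel every pair $x\ne y$ has a maximal multiplier, so the inequality $\delta_J\ge\delta_H$ then holds for every closed multiplier invariant subspace, consistent with the earlier Hardy-space computation $\delta_J=\delta_{\mathcal{H}_1}$.
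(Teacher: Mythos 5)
Your proof is correct and is essentially identical to the paper's argument: restrict $G_{x,y}$ to $J$ (using multiplier invariance to see the restriction is a multiplier of $J$ of norm at most one), invoke Proposition \ref{multiplier modulus of continuity} for $J$ to get $\operatorname{Re}G_{x,y}(y)\leq\delta_{J}(x,y)$, and combine with the maximality hypothesis $\operatorname{Re}G_{x,y}(y)=\delta_{H}(x,y)$. The extra remarks on the standing conventions and non-vacuity are fine but not needed.
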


If $H$ has a complete NP kernel then every pair of points, $x,y,$ has a
maximal multiplier. Hence

\begin{corollary}
If $H$ has a complete NP kernel and $J$ is any closed multiplier invariant
subspace of $H$ then for all $x,y\in X,$ $\delta_{J}(x,y)\geq\delta_{H}(x,y).
$
\end{corollary}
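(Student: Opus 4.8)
The statement is an immediate consequence of the two facts assembled just before it, so the whole task is to see that they combine with no extra work. First, Proposition \ref{multiplier modulus of continuity} tells us that when $H$ has a complete NP kernel, for every pair $x,y\in X$ with $x\neq y$ the extremal multiplier $G_{x,y}$ actually achieves $\operatorname{Re}G_{x,y}(y)=\delta_H(x,y)$; that is, every pair of points has a maximal multiplier in the sense defined above. Second, the preceding Proposition says precisely that if $x,y$ have a maximal multiplier then $\delta_J(x,y)\geq\delta_H(x,y)$ for any closed multiplier invariant subspace $J$. So the plan is simply: fix $x,y$, produce the maximal multiplier from the complete NP hypothesis, then quote the preceding Proposition. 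Since $x,y$ are arbitrary, the inequality holds for all pairs (the degenerate case $x=y$ being trivial, both sides vanishing).

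For completeness I would recall why the preceding Proposition applies. Because $J$ is multiplier invariant, multiplication by the function $G_{x,y}\in M(H)$ carries $J$ into $J$, so $G_{x,y}\in M(J)$, and its multiplier norm on $J$ is at most its multiplier norm on $H$, namely $1$ (restricting the operator $M_{G_{x,y}}$ to the invariant subspace $J$ cannot increase its norm). Moreover $G_{x,y}(x)=0$. Hence $G_{x,y}$ is an admissible competitor in the extremal problem of Proposition \ref{multiplier modulus of continuity} applied to the space $J$, which gives $\operatorname{Re}G_{x,y}(y)\leq\delta_J(x,y)$. Combining this with the complete NP equality $\operatorname{Re}G_{x,y}(y)=\delta_H(x,y)$ yields $\delta_H(x,y)\leq\delta_J(x,y)$.

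There is essentially no obstacle here: the only thing to be careful about is the observation that passing to a multiplier invariant subspace does not enlarge the multiplier norm, which is exactly the role the invariance hypothesis plays, and the only structural input is Proposition \ref{multiplier modulus of continuity}, whose proof is cited rather than reproduced. I would therefore present the corollary's proof as one or two sentences: apply the preceding Proposition together with the fact, recorded in Proposition \ref{multiplier modulus of continuity}, that a complete NP kernel guarantees a maximal multiplier for every pair of points of $X$.
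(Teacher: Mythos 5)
Your argument is correct and is exactly the paper's: the authors state that a complete NP kernel guarantees a maximal multiplier for every pair of points (via Proposition \ref{multiplier modulus of continuity}) and then invoke the immediately preceding proposition, whose proof rests on the same observation you make, that restricting $M_{G_{x,y}}$ to the invariant subspace $J$ cannot increase its multiplier norm. Nothing is missing.
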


The converse of the corollary is not true. Having a complete NP kernel is not
a necessary condition in order for every pair of points to have an maximal
multiplier; it is sufficient that the kernel have the scalar two point Pick
property, see \cite[Ch. 6,7]{AM}.

\subsubsection{Spaces with Complete Nevanlinna Pick Kernels}

Suppose that $H$ is a RKHS on $X$ with a complete NP kernel $K(\cdot,\cdot).$
Suppose also, and this is for convenience, that we have a distinguished point
$\omega\in X$ such that $\forall x\in X,$ $K(\omega,x)=K(x,\omega)=1.$ The
following information about invariant subspaces of $H$ is due to McCullough
and Trent \cite{MT}, further information is in \cite{GRS}.

\begin{proposition}
Suppose $J$ is a closed multiplier invariant subspace of $H.$ There are
multipliers $\left\{  m_{i}\right\}  $ so that the reproducing kernel for $J$
is%
\begin{equation}
K_{J}(x,y)=\left(  \sum m_{i}(x)\overline{m_{i}(y)}\right)  K(x,y). \label{mt}%
\end{equation}

\end{proposition}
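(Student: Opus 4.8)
The statement is a theorem of McCullough and Trent \cite{MT}, and here is the route I would take. Write $P$ for the orthogonal projection of $H$ onto $J$ and $j_{x}=Pk_{x}$ for the reproducing kernel of $J$ at $x$, so that $K_{J}(x,y)=\langle j_{y},j_{x}\rangle$ and, taking inner products in \eqref{add}, $K(x,y)=K_{J}(x,y)+K_{J^{\perp}}(x,y)$. Let $\{b_{i}\}$ be the functions furnished by the complete NP hypothesis, $1-1/K(x,y)=\sum_{i}b_{i}(x)\overline{b_{i}(y)}$; note in passing that $K$ is nowhere zero, since $K(x,y)\bigl(1-\sum_{i}b_{i}(x)\overline{b_{i}(y)}\bigr)\equiv 1$. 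The plan is to prove the proposition in three steps: (A) show that $L(x,y):=K_{J}(x,y)/K(x,y)$ is a positive semidefinite kernel; (B) observe that $(1-L(x,y))K(x,y)=K_{J^{\perp}}(x,y)$, which is positive semidefinite; (C) use the characterization of multipliers of a complete NP space to convert (B) into a representation $L(x,y)=\sum_{i}m_{i}(x)\overline{m_{i}(y)}$ with $m_{i}\in M(H)$, so that $K_{J}=LK=\bigl(\sum_{i}m_{i}\overline{m_{i}}\bigr)K$, which is the assertion.

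Step (A) is the substantive point, and is the only place where the hypothesis that $J$ is multiplier invariant is used. First I would note that $(b_{i})$ already defines a contractive multiplier from $H\otimes\ell^{2}$ to $H$: the kernel $K(x,y)-\bigl(\sum_{i}b_{i}(x)\overline{b_{i}(y)}\bigr)K(x,y)$ is the constant kernel $1$, hence positive semidefinite, so the complete NP multiplier characterization (see \cite{AM}) applies; in particular each $b_{i}\in M(H)$ and $\sum_{i}M_{b_{i}}M_{b_{i}}^{\ast}\le I$. Since $J$ is invariant under every multiplier of $H$, each $M_{b_{i}}$ restricts to $T_{i}:=M_{b_{i}}|_{J}\in B(J)$, and $T=(T_{i}):J\otimes\ell^{2}\to J$ is again a contraction, so $I_{J}-\sum_{i}T_{i}T_{i}^{\ast}\ge 0$. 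Applying Proposition \ref{multiplier} in the RKHS $J$ (in which $b_{i}$ is a multiplier with multiplication operator $T_{i}$) gives $T_{i}^{\ast}j_{x}=\overline{b_{i}(x)}\,j_{x}$, whence
\begin{align*}
L(x,y)=K_{J}(x,y)\Bigl(1-\sum_{i}b_{i}(x)\overline{b_{i}(y)}\Bigr)
&=\langle j_{y},j_{x}\rangle-\sum_{i}\bigl\langle T_{i}^{\ast}j_{y},\,T_{i}^{\ast}j_{x}\bigr\rangle\\
&=\Bigl\langle\Bigl(I_{J}-\sum_{i}T_{i}T_{i}^{\ast}\Bigr)j_{y},\,j_{x}\Bigr\rangle
=\langle Dj_{y},\,Dj_{x}\rangle,
\end{align*}
where $D:=\bigl(I_{J}-\sum_{i}T_{i}T_{i}^{\ast}\bigr)^{1/2}\in B(J)$. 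The last expression is a Gram kernel in the family $\{Dj_{x}\}_{x\in X}$, so $L$ is positive semidefinite.

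Step (B) is immediate from \eqref{add}, since $(1-L(x,y))K(x,y)=K(x,y)-K_{J}(x,y)=K_{J^{\perp}}(x,y)$ is a reproducing kernel. For step (C) I would reuse the factorization already obtained in (A): let $\Phi(x):J\to\mathbb{C}$ be the functional $\Phi(x)v=\langle v,\,Dj_{x}\rangle$, so that $\Phi(x)\Phi(y)^{\ast}=\langle Dj_{y},Dj_{x}\rangle=L(x,y)$. Then (B) says exactly that $\bigl(1-\Phi(x)\Phi(y)^{\ast}\bigr)K(x,y)\succeq 0$, so by the complete NP multiplier characterization $\Phi$ is a contractive multiplier from $H\otimes J$ to $H$. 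Choosing an orthonormal basis $(e_{i})$ of $J$ and setting $m_{i}(x):=\Phi(x)e_{i}=\langle e_{i},Dj_{x}\rangle$ produces multipliers $m_{i}\in M(H)$ with $\sum_{i}m_{i}(x)\overline{m_{i}(y)}=\langle Dj_{y},Dj_{x}\rangle=L(x,y)$, and therefore $K_{J}(x,y)=\bigl(\sum_{i}m_{i}(x)\overline{m_{i}(y)}\bigr)K(x,y)$, as claimed.

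The one genuinely nontrivial ingredient — the main obstacle — is the black box used in steps (A) and (C): the implication that any operator-valued $\Phi$ with $\bigl(I-\Phi(x)\Phi(y)^{\ast}\bigr)K(x,y)\succeq 0$ is a contractive multiplier of $H$. This is the ``only if'' half of the complete NP multiplier characterization (the McCullough--Quiggin theorem; see \cite{AM}) and is precisely the feature special to complete NP kernels. Everything else reduces to the identity in step (A), which is what turns multiplier invariance of $J$ into positivity of $K_{J}/K$ — via the compression $(T_{i})$ of the ``$b$-shift'' to $J$ — together with routine bookkeeping.
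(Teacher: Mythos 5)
The paper gives no proof of this proposition at all: it simply attributes the result to McCullough and Trent \cite{MT} and moves on. So there is nothing in the text to compare against; what you have written is, in essence, a reconstruction of the McCullough--Trent argument itself, and it is correct. Step (A) is the heart of the matter and you execute it properly: the row $(b_{i})$ is a contractive multiplier because $\bigl(1-\sum b_{i}(x)\overline{b_{i}(y)}\bigr)K(x,y)\equiv 1$ is positive semidefinite, multiplier invariance lets you compress to $J$, and the defect operator $D$ exhibits $K_{J}/K$ as a Gram kernel. Steps (B) and (C) then assemble the conclusion correctly ($\sum_{i}m_{i}(x)\overline{m_{i}(y)}=\langle Dj_{y},Dj_{x}\rangle$ by Parseval). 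One small correction of emphasis: the ``black box'' you flag in (A) and (C) --- that global positivity of $\bigl(I-\Phi(x)\Phi(y)^{\ast}\bigr)K(x,y)$ on all of $X\times X$ forces $\Phi$ to be a contractive multiplier --- is not actually where the complete NP hypothesis lives. For any RKHS that condition says exactly that the densely defined map $k_{x}\mapsto k_{x}\otimes\Phi(x)^{\ast}$ is a contraction on the span of the kernels, hence extends, and its adjoint is $M_{\Phi}$; McCullough--Quiggin is needed only for the finite-subset (Pick interpolation) version. The complete NP property enters your proof earlier and more essentially: it is what guarantees the existence of the functions $b_{i}$ with $1-1/K=\sum b_{i}\overline{b_{i}}$, without which step (A) cannot be set up at all.
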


\begin{corollary}
\label{np}If $H$ has a complete NP kernel and $J$ is any closed multiplier
invariant subspace of $H$ then for all $x,y\in X$
\[
\delta_{J}(x,y)\geq\delta_{H}(x,y)\geq\delta_{J^{\perp}}(x,y).
\]
{}
\end{corollary}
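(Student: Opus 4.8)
The plan is to get both inequalities at once out of the McCullough--Trent description (\ref{mt}) of the reproducing kernel of an invariant subspace: the ``magic'' ball identity (\ref{magic2}) is exactly what is needed for the estimate on $J^{\perp}$, while plain Cauchy--Schwarz handles the estimate on $J$. (The left inequality $\delta_J\ge\delta_H$ is also the Corollary at the end of the preceding subsubsection, obtained from maximal multipliers, but since (\ref{mt}) is needed for $J^{\perp}$ in any case it is cleanest to prove both at once.)

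First I would fix notation. Write $K$ for the kernel of $H$ and, by the McCullough--Trent Proposition, choose multipliers $\{m_i\}$ with $K_J(x,y)=M(x,y)K(x,y)$, where $M(x,y)=\sum_i m_i(x)\overline{m_i(y)}$. Set $v_x=(m_i(x))_i$; then $M(x,y)=\langle v_x,v_y\rangle$ and $\|v_x\|^2=M(x,x)=K_J(x,x)/K(x,x)=\|j_x\|^2/\|k_x\|^2\le1$, since $j_x=Pk_x$ is an orthogonal projection of $k_x$ onto $J$. In particular $v_x\in\ell^2$, and $\|v_x\|<1$ precisely when $j_x^{\perp}\ne 0$. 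From $k_x=j_x+j_x^{\perp}$ (equation (\ref{add})) and orthogonality of $J,J^{\perp}$ we have $K=K_J+K_{J^{\perp}}$, hence
\[
K_{J^{\perp}}(x,y)=\bigl(1-\langle v_x,v_y\rangle\bigr)K(x,y),\qquad K_{J^{\perp}}(x,x)=\bigl(1-\|v_x\|^2\bigr)K(x,x).
\]

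Next I would substitute into the definition (\ref{def1}) of $\delta$. For the subspace $J$,
\[
1-\delta_J(x,y)^2=\frac{|K_J(x,y)|^2}{K_J(x,x)K_J(y,y)}=\frac{|\langle v_x,v_y\rangle|^2}{\|v_x\|^2\,\|v_y\|^2}\,\bigl(1-\delta_H(x,y)^2\bigr)\le 1-\delta_H(x,y)^2
\]
by Cauchy--Schwarz, giving $\delta_J\ge\delta_H$. For $J^{\perp}$,
\[
1-\delta_{J^{\perp}}(x,y)^2=\frac{|1-\langle v_x,v_y\rangle|^2}{(1-\|v_x\|^2)(1-\|v_y\|^2)}\,\bigl(1-\delta_H(x,y)^2\bigr),
\]
so $\delta_{J^{\perp}}\le\delta_H$ is equivalent to $(1-\|v_x\|^2)(1-\|v_y\|^2)\le|1-\langle v_x,v_y\rangle|^2$. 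Wherever $\delta_{J^{\perp}}$ is defined we have $\|v_x\|,\|v_y\|<1$, i.e. $v_x,v_y\in\mathbb{B}^{\infty}$, and this is exactly identity (\ref{magic2}) rearranged, since $(1-\|v_x\|^2)(1-\|v_y\|^2)/|1-\langle v_x,v_y\rangle|^2=1-\rho_{\mathbb{B}^{\infty}}(v_x,v_y)^2\le1$. That completes both inequalities.

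I do not anticipate a genuine obstacle; the computation is short once (\ref{mt}) is in hand. The only delicate point is the degenerate one --- points where $k_x$, $j_x$, or $j_x^{\perp}$ vanishes, so that one of the metrics is undefined at $x$ --- but the paper's standing convention, that such relations are asserted only where all terms are defined, disposes of it, and conveniently it is exactly that convention which confines $v_x$ to the open ball $\mathbb{B}^{\infty}$, so that (\ref{magic2}) may be quoted verbatim without extending it to the boundary sphere.
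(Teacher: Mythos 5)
Your proposal is correct and follows essentially the same route as the paper: both start from the McCullough--Trent factorization $K_J=AK$ with $A(x,y)=\sum m_i(x)\overline{m_i(y)}$, get $\delta_J\geq\delta_H$ from Cauchy--Schwarz on $A$ (the paper's primary route is the maximal-multiplier corollary, but it explicitly notes this alternative), and reduce $\delta_{J^\perp}\leq\delta_H$ to the inequality $\left\vert 1-A(x,y)\right\vert^{2}\geq\left(1-A(x,x)\right)\left(1-A(y,y)\right)$. The only difference is at that last step, where the paper proves the inequality directly by Cauchy--Schwarz and the AM--GM inequality while you obtain it by recognizing $A(x,y)=\left\langle v_x,v_y\right\rangle$ with $v_x,v_y\in\mathbb{B}^{\infty}$ and quoting (\ref{magic2}) --- a legitimate and rather tidy observation, since the paper's two-line chain is in effect a proof of that ball identity's nonnegativity.
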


\begin{proof}
We start with formula (\ref{mt}) which we rewrite for convenience as
\begin{equation}
K_{J}(x,y)=A(x,y)K(x,y). \label{AK}%
\end{equation}
The first inequality is the statement of the previous corollary. Alternatively
we could start from the previous equality and use (\ref{inequality}) to
compare $\delta_{H}$ and $\delta_{J}$ yielding a quantitative version of the
desired inequality.

For the second inequality first note that $K_{J^{\perp}}=K-K_{J}=K-AK=\left(
1-A\right)  K.$

(Note that for any $x$, $A(x,x)\leq1$ because it is the ratio of the squared
norms of two kernel functions and the one on top, being a projection of the
one on bottom, has smaller norm. Also, by Cauchy-Schwarz, $\left\vert
A(x,y)\right\vert ^{2}\leq A(x,x)A(y,y)\leq1.$ To rule out the case of
equality note that if $A(x,x)=1$ then $k_{x}\in J$ and hence every function in
$J^{\perp}$ vanishes at $x$ which puts $x$ outside the domain of
$\delta_{J^{\perp}}.$)

Recalling the formula for $\delta$ we see that our claim will be established
if we can show for the $x,y\in X$ that are covered by the claim we have
\[
\frac{\left\vert 1-A(x,y)\right\vert ^{2}}{\left(  1-A(x,x)\right)  \left(
1-A(y,y\right)  )}\geq1.
\]
We have
\begin{align*}
\left\vert 1-A(x,y)\right\vert ^{2}  &  \geq(1-\left\vert A(x,y)\right\vert
)^{2}\\
&  \geq\left(  1-A(x,x)^{1/2}A(y,y)^{1/2}\right)  ^{2}\\
&  =1-2A(x,x)^{1/2}A(y,y)^{1/2}+A(x,x)A(y,y)\\
&  \geq1-A(x,x)-A(y,y)+A(x,x)A(y,y)\\
&  =\left(  1-A(x,x)\right)  \left(  1-A(y,y\right)  ).
\end{align*}
Here the passage from the first line to the second uses Cauchy-Schwarz, the
passage from third to fourth uses the arithmetic mean, geometric mean inequality.
\end{proof}

\subsubsection{Inequalities in the Other Direction; Bergman Type Spaces}

Let $H=\mathcal{H}_{2}$ be the Bergman space. That is, $H$ is the RKHS of
holomorphic functions on the disk with reproducing kernel $K(z,w)=(1-\bar
{w}z)^{-2}.$ Let $J$ be the invariant subspace consisting of all functions in
$H $ which vanish at the origin.

\begin{proposition}
For all $z,w\in\mathbb{D}$%
\[
\delta_{J}(z,w)\leq\delta_{H}(z,w).
\]

\begin{proof}
We have%
\begin{align*}
K_{J}(z,w)  &  =K(z,w)-1=\frac{1}{(1-\bar{w}z)^{2}}-1\\
&  =\frac{\bar{w}z(2-\bar{w}z)}{(1-\bar{w}z)^{2}}=\frac{2\bar{w}z(1-\frac
{\bar{w}z}{2})}{(1-\bar{w}z)^{2}}\\
&  =B(z.w)K(z,w).
\end{align*}
At this stage we can see the difference between this situation and the one in
the previous section. Here the ratio $K_{J}/K$ is \textit{not }a positive
definite function. To finish we need to show
\[
\frac{\left\vert B(z,w)\right\vert ^{2}}{B(z,z)B(w,w)}\geq1.
\]
Thus we need to show
\[
\left\vert 2-\bar{w}z\right\vert ^{2}\geq(2-\bar{w}w)(2-\bar{z}z).
\]
Equivalently, we need
\[
-4\operatorname{Re}\bar{w}z\geq-2\bar{w}w-2\bar{z}z.
\]
This follows from the inequality between the arithmetic and geometric means.
\end{proof}
\end{proposition}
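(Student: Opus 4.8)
The plan is to reduce the claimed inequality to an elementary scalar estimate by using the explicit form of the reproducing kernel of $J$. Since the Bergman kernel satisfies $k_{0}(z)=K(z,0)=1$ and $\Vert k_{0}\Vert^{2}=k_{0}(0)=1$, the one-dimensional space $J^{\perp}$ is spanned by the constant function $1$, so by \eqref{add} the reproducing kernel of $J$ is $K_{J}(z,w)=K(z,w)-1=(1-\bar{w}z)^{-2}-1$. First I would put this in the factored form $K_{J}(z,w)=B(z,w)K(z,w)$ with
\[
B(z,w)=1-(1-\bar{w}z)^{2}=\bar{w}z\,(2-\bar{w}z),
\]
just as in the product-of-kernels discussion, while emphasizing the contrast with Corollary~\ref{np}: here $B$ is \emph{not} positive semidefinite, so the inequality \eqref{inequality} is unavailable and one must work directly from the definition \eqref{def1}.

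Next, unwinding $\delta_{J}(z,w)\leq\delta_{H}(z,w)$ through \eqref{def1} (equivalently, $1-\delta_{J}^{2}\geq 1-\delta_{H}^{2}$) and cancelling the common factor $|K(z,w)|^{2}/(K(z,z)K(w,w))$, the statement becomes
\[
\frac{|B(z,w)|^{2}}{B(z,z)\,B(w,w)}\geq 1 .
\]
Substituting $B(z,z)=|z|^{2}(2-|z|^{2})$, $B(w,w)=|w|^{2}(2-|w|^{2})$ and $|B(z,w)|^{2}=|z|^{2}|w|^{2}\,|2-\bar{w}z|^{2}$, the factors $|z|^{2}|w|^{2}$ cancel (note $z,w\neq 0$, since at $0$ the distance $\delta_{J}$ is undefined by the stated convention), leaving
\[
|2-\bar{w}z|^{2}\geq(2-|z|^{2})(2-|w|^{2}).
\]
Expanding both sides and cancelling the common terms $4$ and $|z|^{2}|w|^{2}$ reduces this to $|z|^{2}+|w|^{2}\geq 2\operatorname{Re}(\bar{w}z)$, i.e.\ $|z-w|^{2}\geq 0$, which is the arithmetic mean--geometric mean inequality.

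I do not expect a genuine obstacle: the whole argument is a short computation once the form of $K_{J}$ is identified. The only points requiring attention are the bookkeeping with conjugations in passing from the kernel identity to the scalar inequality, and the conceptual remark that the positivity argument used in the complete NP case genuinely fails here, which is exactly why the direct estimate is needed. One could also note in passing that equality in $|z-w|^{2}\geq 0$ forces $z=w$, so the inequality $\delta_{J}\leq\delta_{H}$ is strict off the diagonal, though this sharpening is not required for the statement.
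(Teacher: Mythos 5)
Your proposal is correct and follows essentially the same route as the paper: the factorization $K_{J}(z,w)=\bar{w}z(2-\bar{w}z)K(z,w)$, the reduction to $|2-\bar{w}z|^{2}\geq(2-|z|^{2})(2-|w|^{2})$, and the final elementary inequality (which the paper cites as AM--GM and you recognize as $|z-w|^{2}\geq0$) are all identical in substance. Your added remarks --- that $z,w\neq0$ is needed since $\delta_{J}$ is undefined at the origin, and that the inequality is strict off the diagonal --- are correct refinements but do not change the argument.
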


In fact this example is just the simplest case of a general pattern introduced
in \cite{HJS} and \cite{MR}. In \cite{MR} McCullough and Richter introduce a
general class of RKHS which share many of the properties of the Bergman space.
In particular their work covers the spaces $\mathcal{H}_{\alpha},$
$1\leq\alpha\leq2,$ and we will focus on that case. Suppose $V$ is in
invariant subspace of some $\mathcal{H}_{\alpha},$ $1\leq\alpha\leq2$ and that
$V$ has index 1, that is $\dim V\ominus zV=1.\ $Let $\left\{  k_{z}\right\}  $
be the reproducing kernels for $\mathcal{H}_{\alpha}$ and $\left\{
j_{z}\right\}  $ be those for $V.$

\begin{proposition}
[Corollary 0.8 of \cite{MR}]There is a function $G\in\mathcal{H}_{\alpha}$ and
a positive semidefinite sesquianalytic function $A(z,w)$ so that for
$z,w\in\mathbb{D}$%
\[
j_{z}(w)=\overline{G(z)}G(w)(1-\bar{z}wA(z,w))k_{z}(w).
\]

\end{proposition}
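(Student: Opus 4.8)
The plan is to follow the argument of \cite{MR}, which runs through the wandering-subspace machinery for shifts that are close to an isometry; the restriction $1\le\alpha\le2$ is precisely what makes that machinery available. From the explicit monomial norms $\|z^{n}\|^{2}=n!\,\Gamma(\alpha)/\Gamma(n+\alpha)$ one gets $\|z^{n+1}\|^{2}/\|z^{n}\|^{2}=(n+1)/(n+\alpha)\le1$, so $M_{z}$ is a contraction on $\mathcal{H}_{\alpha}$; and for $1\le\alpha\le2$ the kernel $(1-\bar z w)^{-\alpha}$ satisfies the structural hypothesis of \cite{MR} (equivalently, $M_{z}$ obeys the relevant concavity-type operator inequality, i.e.\ is ``close to an isometry'' in Shimorin's sense), and that inequality passes to the invariant subspace $V$ because $V$ is $M_{z}$-invariant. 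Since $M_{z}$ is analytic ($\bigcap_{n}z^{n}\mathcal{H}_{\alpha}=\{0\}$), the wandering subspace theorem applies and gives $V=\bigvee_{n\ge0}z^{n}G$, where $G$ is a unit vector spanning the one-dimensional space $V\ominus zV$ (this is the index-one hypothesis). In particular $G\in V\subset\mathcal{H}_{\alpha}$, so the asserted membership is automatic.

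I would next dispose of the trivial part of the identity. For $h\in V$ we have $zh\in V$ and $\langle 1,zh\rangle=\overline{(zh)(0)}=0$, so $P_{V}1$ is orthogonal to $zV$; hence $P_{V}1\in V\ominus zV$, and therefore $P_{V}1=\langle 1,G\rangle\,G=\overline{G(0)}\,G$ (using $k_{0}\equiv1$ in $\mathcal{H}_{\alpha}$). As $j_{0}=P_{V}1$, this gives $j_{0}(w)=\overline{G(0)}\,G(w)$, which is exactly the value the proposed formula assigns when $\bar z w=0$; so the formula carries no information on the locus $\bar z w=0$, and (after cancelling any common zeros of $G$) it remains to prove that the sesquianalytic function
\[
q(z,w):=\frac{j_{z}(w)}{\overline{G(z)}\,G(w)\,k_{z}(w)}
\]
satisfies $1-q(z,w)=\bar z w\,A(z,w)$ for some \emph{positive semidefinite} sesquianalytic $A$ (the divisibility by $\bar z w$ is just the relation $q\equiv1$ on $\bar z w=0$, which we have checked). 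As sanity checks: $V=\Theta\mathcal{H}_{1}$ gives $G=\Theta$ and $A\equiv0$, while $V=\{f\in\mathcal{H}_{2}:f(0)=0\}$ gives $G=\sqrt2\,z$ and $q=1-\tfrac12\bar z w$, i.e.\ $A\equiv\tfrac12$; the general statement interpolates between these.

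The substantive step, and the one I expect to be the main obstacle, is the positive semidefiniteness of $A$; this is the contractive-divisor phenomenon, and it does not follow from soft manipulations. Granting the wandering decomposition $V=\bigvee_{n\ge0}z^{n}G$, one computes $j_{z}=P_{V}k_{z}$ by working with the (non-orthogonal) generating system $\{z^{n}G\}$; this expresses $j_{z}(w)$ in terms of $G$, $k_{z}(w)$, and the Gram data of that system and forces the shape $j_{z}(w)=\overline{G(z)}G(w)\left(1-\bar z w\,A(z,w)\right)k_{z}(w)$ with $A$ assembled from those overlaps, hence positive semidefinite. In \cite{MR} this positivity is obtained by dilating the operator inequality that $M_{z}$ satisfies on $\mathcal{H}_{\alpha}$: one realizes $M_{z}|_{V}$ inside a standard vector-valued shift model and reads off that the defect kernel is positive. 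I would carry out that dilation, verify the operator inequality for $1\le\alpha\le2$ from the monomial norms above, extract $A$, and close by confirming that the two limiting cases are recovered.
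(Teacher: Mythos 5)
The first thing to say is that the paper does not prove this proposition at all: it is quoted verbatim as Corollary 0.8 of \cite{MR}, and the citation is the entire ``proof.'' So the relevant comparison is between your outline and the McCullough--Richter argument it points to. Your roadmap is the right one, and the parts you actually execute are correct: the monomial norms $\|z^{n}\|^{2}=n!\,\Gamma(\alpha)/\Gamma(n+\alpha)$ do give $\|z^{n+1}\|^{2}/\|z^{n}\|^{2}=(n+1)/(n+\alpha)\le 1$ for $\alpha\ge 1$; the wandering subspace theorem (valid here because $M_{z}$ is analytic and satisfies the Shimorin/McCullough--Richter operator inequality, which passes to invariant subspaces) produces $G$ spanning $V\ominus zV$, so $G\in V\subset\mathcal{H}_{\alpha}$; the computation $j_{0}=P_{V}k_{0}=P_{V}1=\overline{G(0)}G$ together with Hermitian symmetry $j_{z}(w)=\overline{j_{w}(z)}$ gives $q\equiv 1$ on $\{z=0\}\cup\{w=0\}$ and hence divisibility of $1-q$ by $\bar z w$; and both sanity checks ($A\equiv 0$ for $\Theta\mathcal{H}_{1}$, $A\equiv\tfrac12$ for the subspace of Bergman functions vanishing at $0$, matching the paper's own computation $K_{J}=2\bar w z(1-\tfrac{\bar w z}{2})K$) are right.

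The gap is that the one claim that constitutes the theorem --- positive semidefiniteness of $A$ --- is named but not proved. You correctly diagnose that it ``does not follow from soft manipulations'' and that it is the contractive-divisor phenomenon, but the paragraph that should establish it instead describes what you \emph{would} do: realize $M_{z}|_{V}$ in a vector-valued shift model, dilate the operator inequality, and ``read off that the defect kernel is positive.'' That reading-off is where all the content lives (it is, in effect, the proof that $\overline{G(z)}G(w)k_{z}(w)-j_{z}(w)$ is a positive kernel after dividing by $\bar z w$, i.e.\ that $G$ is a contractive divisor on $V$), and nothing in your sketch pins it down; in particular the remark that $A$ is ``assembled from the Gram data of $\{z^{n}G\}$, hence positive semidefinite'' is not an argument --- being built from inner products of a non-orthogonal generating system does not by itself make the resulting kernel positive. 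A smaller loose end: the phrase ``after cancelling any common zeros of $G$'' glosses over the case $G(0)=0$, where $q$ as written is not obviously well defined and the divisibility argument needs to be rerun on $j_{z}(w)/(\overline{G(z)}G(w))$ rather than on $q$. As an outline of where the proposition comes from, your plan is faithful to the source the paper cites; as a proof, it stops exactly at the step the proposition is about.
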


\begin{corollary}%
\[
\delta_{V}(z,w)\leq\delta_{\mathcal{H}_{\alpha}}(z,w).
\]

\begin{proof}
The factors of $G$ do not affect $\delta.$ After they are dropped the argument
is then the same as in the proof of Corollary \ref{np}.%
\begin{align*}
\left\vert 1-\bar{z}wA(z,w)\right\vert ^{2}  &  \geq\left(  1-\left\vert
\bar{z}wA(z,w)\right\vert \right)  ^{2}\\
&  \geq(1-\left\vert z\right\vert A(z,z)^{1/2}\left\vert w\right\vert
A(w,w)^{1/2})^{2}\\
&  \geq1-2\left\vert z\right\vert A(z,z)^{1/2}\left\vert w\right\vert
A(w,w)^{1/2}+\left\vert z\right\vert ^{2}A(z,z)^{1/2}\left\vert w\right\vert
^{2}A(w,w)\\
&  \geq1-\left\vert z\right\vert ^{2}A(z,z)-\left\vert w\right\vert
^{2}A(w,w)+\left\vert z\right\vert ^{2}A(z,z)\left\vert w\right\vert
^{2}A(w,w)\\
&  \geq(1-\left\vert z\right\vert ^{2}A(z,z))(1-\left\vert w\right\vert
^{2}A(w,w))
\end{align*}
which is what is needed.

Again, the passage from the first line to the second uses Cauchy-Schwarz, the
passage from third to fourth uses the arithmetic mean, geometric mean inequality.
\end{proof}
\end{corollary}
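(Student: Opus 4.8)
The plan is to reduce the inequality to the single scalar estimate displayed immediately after the statement, and then to verify that estimate by Cauchy--Schwarz together with the arithmetic--geometric mean inequality, exactly in the style of the proof of Corollary \ref{np}. All the substantive content has already been absorbed into the cited factorization (Corollary 0.8 of \cite{MR}); what is left is bookkeeping.

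First I would record the elementary identity $1-\delta_{\mathcal{H}_\alpha}^2(z,w)=|k_z(w)|^2/(k_z(z)k_w(w))$ underlying the definition (\ref{def1}), together with its analogue for $V$ written in terms of the kernels $\{j_z\}$. Substituting the factorization $j_z(w)=\overline{G(z)}G(w)(1-\bar z w A(z,w))k_z(w)$ and evaluating on the diagonal to get $j_z(z)=|G(z)|^2(1-|z|^2A(z,z))k_z(z)$, and likewise for $w$, the factors $|G(z)|^2$ and $|G(w)|^2$ cancel between numerator and denominator --- this is precisely the rescaling invariance $\delta_{GH}=\delta_H$ of Section 5.2. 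What survives is
\[
1-\delta_V^2(z,w)=\frac{|1-\bar z w A(z,w)|^2}{(1-|z|^2A(z,z))(1-|w|^2A(w,w))}\,\bigl(1-\delta_{\mathcal{H}_\alpha}^2(z,w)\bigr),
\]
so, since $1-\delta_{\mathcal{H}_\alpha}^2(z,w)\ge 0$, the claim $\delta_V(z,w)\le\delta_{\mathcal{H}_\alpha}(z,w)$ is equivalent to showing that the displayed fraction is at least $1$.

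To do that I would first note that, since $j_z(z)=\|j_z\|^2\ge 0$ while $|G(z)|^2 k_z(z)>0$, the quantity $1-|z|^2A(z,z)$ is nonnegative (and in fact strictly positive wherever $\delta_V(z,\cdot)$ is defined), and likewise with $w$; since $A$ is positive semidefinite, $A(z,z),A(w,w)\ge 0$, so also $|z|^2A(z,z),|w|^2A(w,w)\le 1$. Then
\[
|1-\bar z w A(z,w)|\ge 1-|z|\,|w|\,|A(z,w)|\ge 1-|z|\,|w|\,A(z,z)^{1/2}A(w,w)^{1/2},
\]
the second step being Cauchy--Schwarz for the positive semidefinite $A$; the right-hand side is nonnegative, so squaring is legitimate and
\[
|1-\bar z w A(z,w)|^2\ge 1-2|z|\,|w|\,A(z,z)^{1/2}A(w,w)^{1/2}+|z|^2A(z,z)\,|w|^2A(w,w)\ge (1-|z|^2A(z,z))(1-|w|^2A(w,w)),
\]
where the last inequality is the arithmetic--geometric mean inequality applied to $|z|^2A(z,z)$ and $|w|^2A(w,w)$. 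This is the required fraction $\ge 1$, and the proof is complete.

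I do not expect a serious obstacle. The one point deserving a moment's care is the legitimacy of squaring the triangle-inequality estimate, i.e.\ the nonnegativity of $1-|z|\,|w|\,A(z,z)^{1/2}A(w,w)^{1/2}$, which follows from the fact that $1-|z|^2A(z,z)$ and $1-|w|^2A(w,w)$ are ratios of nonnegative squared kernel norms; the other mild subtlety --- the possibility that $z$ or $w$ lies outside the domain of $\delta_V$ (when $j_z$ or $j_w$ vanishes) --- is covered by the paper's standing convention that such identities are asserted only where both sides are defined.
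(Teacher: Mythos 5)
Your proposal is correct and follows essentially the same route as the paper: cancel the $G$ factors by rescaling invariance, reduce to showing $\left\vert 1-\bar{z}wA(z,w)\right\vert ^{2}\geq(1-\left\vert z\right\vert ^{2}A(z,z))(1-\left\vert w\right\vert ^{2}A(w,w))$, and establish that by Cauchy--Schwarz for the positive semidefinite $A$ followed by the arithmetic--geometric mean inequality. Your extra care about the nonnegativity of $1-\left\vert z\right\vert \left\vert w\right\vert A(z,z)^{1/2}A(w,w)^{1/2}$ before squaring is a point the paper passes over silently, and it is a worthwhile addition.
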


It is not clear in this context what happens with spaces of the form
$J^{\perp}.$ The following computational example suggests the story may be
complicated. Let $J$ be the invariant subspace of $H=\mathcal{H}_{2}$
consisting of functions $f$ with $f\left(  0\right)  =f^{\prime}\left(
0\right)  =0$. The reproducing kernel for $J^{\perp}$ is
\[
K_{J^{\perp}}(z,w)=1+2\bar{w}z.
\]
To compare $\delta_{J^{\perp}}$ with $\delta_{H}$ we compare the quantities
$1-\delta^{2}.$ If we are looking for an example where the results for spaces
with complete NP kernels fails then we need to find $z,w$ such that
\[
\frac{\left\vert 1+2\bar{w}z\right\vert ^{2}}{\left(  1+2\left\vert
w\right\vert ^{2}\right)  \left(  1+2\left\vert z\right\vert ^{2}\right)
}\leq\frac{\left(  1-\left\vert w\right\vert ^{2}\right)  ^{2}\left(
1-\left\vert z\right\vert ^{2}\right)  ^{2}}{\left\vert 1-\bar{w}z\right\vert
^{4}}.
\]
This certainly does not hold if either $\left\vert z\right\vert $ or
$\left\vert w\right\vert $ is close to $1$ however things are different near
the origin. Suppose $z=-w=t>0.$ The inequality we want is
\[
\frac{\left(  1-2t^{2}\right)  ^{2}}{\left(  1+2t^{2}\right)  ^{2}}\leq
\frac{\left(  1-t^{2}\right)  ^{4}}{\left(  1+t^{2}\right)  ^{4}}.
\]
This fails if $t$ is close to one but we study it now for $t$ near $0.$ In
that case the left hand side is $1-8t^{2}+32t^{4}-96t^{6}+O\left(
t^{8}\right)  $ and the other side is $1-8t^{2}+32t^{4}-88t^{6}+O\left(
t^{8}\right)  $. Hence for small $t$ the inequality does hold.

\begin{proposition}
With $H$ and $J$ as described, if $t$ is small and positive then%
\[
\delta_{J^{\perp}}(t,-t)>\delta_{H}(t,-t).
\]
For $t$ near $1$ the inequality is reversed.
\end{proposition}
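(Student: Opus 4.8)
The plan is to reduce the whole statement to one elementary inequality between rational functions of $t$. By the definition of $\delta$ together with the identity $1-\delta_L^2(z,w)=|K_L(z,w)|^2/\big(K_L(z,z)K_L(w,w)\big)$ used repeatedly above, the Bergman kernel $K(z,w)=(1-\bar w z)^{-2}$ gives
\[
1-\delta_H^2(z,w)=\frac{(1-|z|^2)^2(1-|w|^2)^2}{|1-\bar w z|^4},
\]
while the kernel $K_{J^{\perp}}(z,w)=1+2\bar w z$ recorded just above gives
\[
1-\delta_{J^{\perp}}^2(z,w)=\frac{|1+2\bar w z|^2}{(1+2|z|^2)(1+2|w|^2)}.
\]
Since $\delta_{J^{\perp}}(z,w)>\delta_H(z,w)$ is equivalent to $1-\delta_{J^{\perp}}^2(z,w)<1-\delta_H^2(z,w)$, the proposition becomes a comparison of these two explicit quantities.

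Next I would restrict to the diagonal family $z=-w=t$ with $t\in(0,1)$. There $\bar w z=-t^2$ and $|z|^2=|w|^2=t^2$, so the two expressions collapse to $(1-2t^2)^2/(1+2t^2)^2$ and $(1-t^2)^4/(1+t^2)^4$, and everything reduces to showing
\[
\frac{(1-2t^2)^2}{(1+2t^2)^2}<\frac{(1-t^2)^4}{(1+t^2)^4}
\]
for small $t>0$, with the reverse inequality for $t$ near $1$. For the small-$t$ half I would Taylor expand both sides about $t=0$: both begin $1-8t^2+32t^4+\cdots$, so the sign of the difference is governed by the $t^6$ term, where the left side contributes $-96t^6$ and the right side only $-88t^6$; hence the left side is strictly smaller for all sufficiently small $t>0$, which is exactly $\delta_{J^{\perp}}(t,-t)>\delta_H(t,-t)$. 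For $t$ near $1$ no expansion is needed: as $t\to1^-$ the right side tends to $0$ while the left side tends to $1/9$, so the left side exceeds the right on some interval $(t_0,1)$, giving the reversed inequality there.

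There is no genuine obstacle; the argument is entirely elementary once the two kernels are in hand. The only point requiring care is the bookkeeping in the Taylor expansion — one must confirm that the $t^2$ and $t^4$ coefficients really do coincide, so that the comparison is decided by the $t^6$ coefficient rather than something of lower order. This matching is itself the reason the phenomenon is hard to detect directly: the two metrics agree to high order at the origin, consistent with the third-order agreement of the $\delta$-type metrics noted earlier, and the discrepancy only emerges at order $t^6$.
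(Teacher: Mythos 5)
Your proposal is correct and is essentially the paper's own argument: reduce to comparing $1-\delta^2$ via the two explicit kernels, specialize to $z=-w=t$, and observe that the Taylor expansions $1-8t^2+32t^4-96t^6+O(t^8)$ and $1-8t^2+32t^4-88t^6+O(t^8)$ first differ at order $t^6$, with the limiting values $1/9$ versus $0$ handling $t$ near $1$. Both expansions check out, so nothing further is needed.
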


\section{Questions}

We mentioned in the introduction that most questions in this area have not
been studied. Here we mention a few specific questions which had our interest
while preparing this paper and which indicate how little is know.

1. Suppose $H$ and $H^{\prime}$ are two RKHSs on the same $X$ with distance
functions $\delta$ and $\delta^{\prime};$ and suppose further, in fact, that
$H$ and $H^{\prime}$ are the same spaces of functions with equivalent norms.
What conclusions follow about $\delta$ and $\delta^{\prime}?$

2. In the other direction, what conclusion can one draw about the relation
between $H$ and $H^{\prime}$ if the identity map from $\left(  X,\delta
\right)  $ to $\left(  X,\delta^{\prime}\right)  $ is, say, a contraction or
is bilipschitz?

3. It seams plausible that there is a more complete story to be told related
to Corollary \ref{np}. What is the full class of RKHS for which those
conclusions hold? What assumptions beyond those conclusions are needed to
insure that the space being considered has a complete NP kernel?

4. Given $X$ what metrics $\delta$ can arise from a RKHS $H$ on $X.$ The
question is extremely broad but notice that if you assume further that $H$
must have a complete NP kernel then, by virtue of the realization theorem, a
necessary and sufficient condition is that for some $n$ there is an isometric
map of $\left(  X,\delta\right)  $ into $\left(  \mathbb{B}^{n},\rho\right)
.$ Although that answer is perhaps not particularly intuitive it does give a
condition that is purely geometric, none of the Hilbert space discussion survives.

\end{document}